\documentclass[12pt]{article}
\usepackage{latexsym, amsmath, amsfonts, amsthm, amssymb, a4wide, url}%, showlabels}

\def \RR {\mathbb R}

\def \Tr {\text{\rm Tr}}

\def \i {\rm Int}
\def \a {\rm Aut}

\def \b {\rm b}
\def \cov {\rm Cov}

\def \EE {\mathbb E}

\def \CC {\mathbb C}
\def \PP {\mathbb P}

\def \eps {\varepsilon}

\newtheorem{theorem}{Theorem}[section]
\newtheorem{lemma}[theorem]{Lemma}
\newtheorem{proposition}[theorem]{Proposition}
\newtheorem{corollary}[theorem]{Corollary}
\newtheorem{remark}[theorem]{Remark}

\def\myffrac#1#2 in #3{\raise 2.6pt\hbox{$#3 #1$}\mkern-1.5mu\raise 0.8pt\hbox{$
#3/$}\mkern-1.1mu\lower 1.5pt\hbox{$#3 #2$}}

\def\qed{\hfill $\vcenter{\hrule height .3mm
\hbox {\vrule width .3mm height 2.1mm \kern 2mm \vrule width .3mm
height 2.1mm} \hrule height .3mm}$ \bigskip}

\def \tr {{\rm Tr}}

\begin{document}

\title{Isotropic constants and Mahler volumes}
\author{Bo'az Klartag}
\date{}
\maketitle

\begin{abstract}
This paper contains a number of results related to volumes of projective perturbations of convex bodies
and the Laplace transform on convex cones. First, it is shown that a sharp version of Bourgain's
slicing conjecture implies the Mahler conjecture for convex bodies that are not necessarily centrally-symmetric. Second,
we find that by slightly translating the polar of a centered convex body, we may obtain another body
with a bounded isotropic constant. Third, we provide a counter-example to a conjecture by Kuperberg
on the distribution of volume in a body and in its polar.
\end{abstract}

\section{Introduction}

This paper describes interrelations between duality and distribution of volume
in convex bodies. A convex body is a compact, convex subset $K \subseteq \RR^n$ whose interior $\i(K)$ is non-empty.
If $0 \in \i(K)$, then the  polar body is defined by
$$ K^{\circ} = \{ y \in \RR^n \, ; \, \forall x \in K, \ \langle x,y \rangle \leq 1 \}. $$
The polar body  $K^{\circ}$ is itself a convex body with the origin in its interior, and moreover $(K^{\circ} )^{\circ} = K$.
The {\it Mahler volume} of a convex body $K \subseteq \RR^n$ with the origin in its interior is defined as
$$ s(K) = Vol_n(K) \cdot Vol_n(K^{\circ}), $$
where $Vol_n$ is $n$-dimensional volume. In the class of convex bodies with barycenter at the origin,
the Mahler volume is maximized for ellipsoids, as proven by Santal\'o \cite{Sa}, see also Meyer and Pajor \cite{MP}. The
Mahler conjecture suggests that for any convex body $K \subseteq \RR^n$ containing the origin in its interior,
\begin{equation}
s(K) \geq s(\Delta^n) = \frac{(n+1)^{n+1}}{(n!)^2}, \label{eq_1001}
\end{equation}
where $\Delta^n \subseteq \RR^n$ is any simplex whose vertices span $\RR^n$ and add up to zero. The conjecture
was verified for convex bodies with certain symmetries in the works of Barthe and Fradelizi \cite{BF},
Kuperberg \cite{Ku} and Saint Raymond \cite{sara}. In two dimensions the conjecture was proven by Mahler \cite{Ma}, see
also Meyer \cite{Me}. There is also a well-known version
of the Mahler conjecture for centrally-symmetric convex bodies (i.e., when $K = -K$) that will not be  discussed much here.
It was proven by Bourgain and Milman \cite{BM} that
for any convex body $K \subseteq \RR^n$ with the origin in its interior,
\begin{equation} s(K) \geq c^n \cdot s(\Delta^n) \label{eq_531} \end{equation} for some universal constant
$c > 0$. There are several beautiful, completely different proofs of the Bourgain-Milman inequality
in addition to the original argument, including proofs  by Kuperberg \cite{Ku}, by Nazarov \cite{Na} and by Giannopoulos,
Paouris and Vritsiou \cite{GPV}.
 The covariance matrix of a convex body $K \subseteq \RR^n$ is the matrix $\cov(K) = (\cov_{ij})_{i,j=1,\ldots,n}$
where
$$ \cov_{ij} = \frac{\int_K x_i x_j dx}{Vol_n(K)}  \, - \, \frac{\int_K x_i}{Vol_n(K)} \cdot  \frac{\int_K x_j}{Vol_n(K)}. $$
The isotropic constant of a convex body $K \subseteq \RR^n$ is the parameter $L_K > 0$ defined via
\begin{equation}  L_K^{2n} = \frac{\det \cov(K)}{Vol_n(K)^2}. \label{eq_212} \end{equation}
We equip the space of convex bodies in $\RR^n$ with the usual Hausdorff topology. The Mahler volume is a continuous function defined
on the class of convex bodies in $\RR^n$ containing the origin in their interior. A standard compactness argument shows that the minimum of the Mahler volume in this class is indeed attained. Below we present  a variational argument in the class of projective images of $K$
that yields the following:

\begin{theorem} Let $K \subseteq \RR^n$ be a convex body which is a local minimizer of the Mahler volume.
	Then $\cov(K^{\circ}) \geq (n+2)^{-2} \cdot \cov(K)^{-1}$ in the sense of symmetric matrices, and
	\begin{equation} L_K \cdot L_{K^{\circ}} \cdot s(K)^{1/n} \geq \frac{1}{n+2}.  \label{eq_947}
	\end{equation} \label{thm_1151}
	Consequently any global minimizer must satisfy $L_K \geq L_{\Delta^n}$ or $L_{K^{\circ}} \geq L_{\Delta^n}$.
 \label{thm_206}
\end{theorem}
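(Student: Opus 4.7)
The relevant variation is the $2n$-parameter family
\[ K_{w,z} = T_{-w}(K) - z, \]
where $T_c(x) = x/(1+\langle c, x\rangle)$ is the projective transformation polar-dual to translation: $(T_{-w}(K))^\circ = K^\circ - w$. The Mahler volume $s$ is invariant under invertible linear maps, so necessary conditions for $K$ to be a local minimum of $s$ can be extracted by restricting to this family. The Jacobian of $T_{-w}$ together with the dual identity $(L-z)^\circ = \phi_z(L^\circ)$ for $\phi_z(y) = y/(1-\langle z, y\rangle)$ gives the closed form
\begin{equation*}
s(K_{w,z}) \;=\; \int_K \frac{dx}{(1-\langle w, x\rangle)^{n+1}} \,\cdot\, \int_{K^\circ} \frac{dy}{(1-\langle z, y\rangle + \langle z, w\rangle)^{n+1}}.
\end{equation*}
The extra summand $\langle z, w\rangle$ inside the second integrand reflects that the translation $-z$ on the primal side is equivalent to translating the polar back from $K^\circ - w$ by $+w$; this term is the source of the crucial mixed partial derivative.

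Setting the gradient at $(w, z) = (0,0)$ to zero yields the centering conditions $\int_K x\, dx = 0$ and $\int_{K^\circ} y\, dy = 0$. Using these to eliminate linear terms and expanding each factor with $(1-u)^{-(n+1)} = 1 + (n+1)u + \tfrac{(n+1)(n+2)}{2}u^2 + O(u^3)$, the quadratic part of $s(K_{w,z}) - s(K)$ works out to
\begin{equation*}
(n+1)\, s(K)\Bigl[\,\tfrac{n+2}{2}\bigl(\langle w, \cov(K) w\rangle + \langle z, \cov(K^\circ) z\rangle\bigr) - \langle z, w\rangle\,\Bigr].
\end{equation*}
The main bookkeeping step is verifying that the mixed partial contributes exactly $-(n+1)\langle z, w\rangle$ with no stray second-order terms; this follows because, at second order, the only source of a cross contribution is the first-order Taylor expansion of the $(1-u)^{-(n+1)}$ factor in the second integrand applied to the $\langle z, w\rangle$ summand, with the remaining would-be contributions killed by the centering conditions.

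Local minimality forces this quadratic form in $(w, z) \in \RR^{2n}$ to be non-negative, which is equivalent to positive semidefiniteness of
\begin{equation*}
M \;=\; \begin{pmatrix} (n+2)\cov(K) & -I \\ -I & (n+2)\cov(K^\circ) \end{pmatrix}.
\end{equation*}
Applying the Schur complement criterion to $M$ produces the stated matrix inequality $\cov(K^\circ) \geq (n+2)^{-2}\cov(K)^{-1}$. Taking $n$-th determinants of both sides and invoking the definition $L_K^{2n} = \det \cov(K) / Vol_n(K)^2$ converts the matrix inequality into the scalar bound $L_K L_{K^\circ} s(K)^{1/n} \geq 1/(n+2)$.

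For the final consequence, the simplex $\Delta^n$ is projectively self-polar, hence $L_{(\Delta^n)^\circ} = L_{\Delta^n}$; a direct computation using $s(\Delta^n) = (n+1)^{n+1}/(n!)^2$ verifies that $L_{\Delta^n}^2 \cdot s(\Delta^n)^{1/n} = 1/(n+2)$, so the simplex saturates the scalar inequality. If $K$ is a global minimizer of $s$, then $s(K) \leq s(\Delta^n)$, so the local-minimum inequality gives
\[ L_K L_{K^\circ} \;\geq\; (n+2)^{-1} s(K)^{-1/n} \;\geq\; (n+2)^{-1} s(\Delta^n)^{-1/n} \;=\; L_{\Delta^n}^2, \]
and therefore $\max(L_K, L_{K^\circ}) \geq L_{\Delta^n}$.
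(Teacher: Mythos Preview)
Your argument is correct. The explicit formula for $s(K_{w,z})$, the Taylor expansion, and the Schur-complement step all check out; in particular the mixed block of the Hessian is indeed $-(n+1)s(K)\,I$, coming solely from differentiating the $\langle z,w\rangle$ summand once, as you say.

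Your route differs from the paper's. The paper lifts $K$ to the cone $V=\{(t,tx):t\ge 0,\ x\in K\}$, introduces the logarithmic Laplace transform $\Phi_V$ and its Legendre dual $\Phi_V^*$, and shows that $J:=\Phi_{V^*}-\Phi_V^*$ equals $\log \bar s(T_x)$ up to a constant; the Hessian of $J$ at the distinguished point $e$ is then computed via Lemma~3.1 (using moment formulae for $\Phi_V$ and the duality $\nabla^2\Phi_V^*=(\nabla^2\Phi_V)^{-1}$), and the sign of $\nabla^2 J(e)$ directly gives $(n+2)\cov(K^\circ)-\tfrac{1}{n+2}\cov(K)^{-1}\ge 0$. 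In effect, passing to the Legendre transform performs your Schur complement implicitly: working with $\bar s$ already minimizes over the translation parameter $z$, so the paper only needs an $n$-parameter variation rather than your $2n$-parameter one. What you gain is a self-contained and elementary derivation that avoids the Laplace-transform machinery entirely; what the paper gains is a framework that is reused for Theorem~1.4 (homogeneous cones force $J$ constant, hence equality in the covariance relation) and for Theorem~1.3 and Section~6, where $\Phi_V$ and $\Phi_V^*$ do further work.
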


It is well-known that $L_K > c$ for any convex body $K \subseteq \RR^n$, where $c > 0$ is a universal constant. In fact, the minimal isotropic constant is attained
for ellipsoids. Bourgain's slicing problem \cite{Bou1, Bou2} asks whether $L_K < C$ for a universal constant
$C > 0$. The slicing conjecture has several equivalent formulations,
and it is related to quite a few asymptotic questions about the distribution of volume in high-dimensional convex bodies.
Currently the best known estimate is $L_K \leq C n^{1/4}$ for a convex body $K \subseteq \RR^n$. This was shown in
\cite{K_quarter}, slightly improving upon an earlier estimate by Bourgain \cite{Bou3, Bou4}.
Two sources of information on the slicing problem are the recent book by
Brazitikos, Giannopoulos, Valettas and Vritsiou  \cite{BGVV} and the survey paper by Milman and Pajor \cite{MiP}.
A strong version of the slicing conjecture proposes that for any convex body $K \subseteq \RR^n$,
\begin{equation}
L_K \leq L_{\Delta^n} = \frac{(n!)^{\frac 1 n}}{(n+1)^{\frac{n+1}{2n}} \cdot \sqrt{n+2}}.
\label{eq_1148}
\end{equation}
This conjecture holds true in two dimensions. See also Rademacher \cite{Rad} for supporting evidence.
Theorem \ref{thm_1151} admits the following:

\begin{corollary} The strong version (\ref{eq_1148}) of Bourgain's slicing
	conjecture implies 	Mahler's conjecture (\ref{eq_1001}). \label{cor_1213}
\end{corollary}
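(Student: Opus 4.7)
The plan is to deduce the Mahler conjecture by combining the existence of a global minimizer with the key inequality (\ref{eq_947}) from Theorem \ref{thm_1151}. The strong slicing conjecture kills the two isotropic-constant factors in (\ref{eq_947}) and leaves a pure lower bound on $s(K)^{1/n}$ that, after an arithmetic check, matches $s(\Delta^n)^{1/n}$ exactly. So the work is structural: one only needs to verify that the constants line up.

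First, I would note that the Mahler volume $s(\cdot)$ is continuous on the space of convex bodies with the origin in the interior, and as mentioned in the excerpt, a standard compactness argument (using John's theorem, or direct Hausdorff compactness after a suitable affine normalization) shows that $s$ attains its minimum on this class. Let $K \subseteq \RR^n$ be such a global minimizer; then in particular $K$ is a local minimizer, so Theorem \ref{thm_1151} applies to $K$ and gives
\[
L_K \cdot L_{K^{\circ}} \cdot s(K)^{1/n} \;\geq\; \frac{1}{n+2}.
\]
Next, I would apply the hypothesis (\ref{eq_1148}) twice, to both $K$ and $K^{\circ}$, obtaining $L_K \leq L_{\Delta^n}$ and $L_{K^{\circ}} \leq L_{\Delta^n}$. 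Substituting these into the above yields
\[
s(K)^{1/n} \;\geq\; \frac{1}{(n+2)\, L_{\Delta^n}^{2}}.
\]

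Finally, I would verify that the right-hand side equals $s(\Delta^n)^{1/n}$. Squaring the formula for $L_{\Delta^n}$ from (\ref{eq_1148}) gives
\[
L_{\Delta^n}^{2} \;=\; \frac{(n!)^{2/n}}{(n+1)^{(n+1)/n} (n+2)},
\]
so
\[
\frac{1}{(n+2)\,L_{\Delta^n}^{2}} \;=\; \frac{(n+1)^{(n+1)/n}}{(n!)^{2/n}} \;=\; \left( \frac{(n+1)^{n+1}}{(n!)^2} \right)^{1/n} \;=\; s(\Delta^n)^{1/n}.
\]
Hence $s(K) \geq s(\Delta^n)$, and since $K$ was a global minimizer this shows $s(L) \geq s(\Delta^n)$ for every convex body $L \subseteq \RR^n$ with the origin in its interior, which is (\ref{eq_1001}).

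There is no real obstacle here once Theorem \ref{thm_1151} is in hand: the only thing to be careful about is the existence of a global minimizer on a non-compact class of bodies, which is handled by reducing to bodies in isotropic or John position where the Mahler volume is an affine invariant. Everything else is bookkeeping with the explicit constant $L_{\Delta^n}$.
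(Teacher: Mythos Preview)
Your argument is correct and follows essentially the same route as the paper: apply inequality (\ref{eq_947}) to a minimizer, bound both isotropic constants by $L_{\Delta^n}$ via (\ref{eq_1148}), and check that the resulting lower bound on $s(K)$ coincides with $s(\Delta^n)$. One small slip in wording: the Mahler volume is invariant under \emph{linear} maps, not affine ones (translating $K$ changes $s(K)$), but this is exactly what the compactness reduction needs, so the proof is unaffected.
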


In order to see why inequality (\ref{eq_947})  implies Corollary \ref{cor_1213}, observe that by (\ref{eq_947}) and (\ref{eq_1148}), for any local minimizer $K \subseteq \RR^n$ of the Mahler volume,
$$ \frac{1}{(n+2)^n} \leq L_K^n \cdot L_{K^{\circ}}^n \cdot s(K) \leq
L_{\Delta^n}^{2n} \cdot s(K) =  \frac{(n!)^{2}}{(n+1)^{n+1} \cdot (n+2)^{n}} \cdot s(K),
$$
which clearly yields (\ref{eq_1001}). We are aware of two more conditional statements  in the spirit
of Corollary \ref{cor_1213}. Artstein-Avidan, Karasev and Ostrover \cite{AKO} proved that the Mahler conjecture for centrally-symmetric bodies would follow
from the Viterbo conjecture in symplectic geometry.
It was recently shown that  the Minkowski conjecture would follow from a strong version of the
centrally-symmetric slicing conjecture, see Magazinov \cite{Mag}.

\smallskip
For the proof of Theorem \ref{thm_206}
we use the Laplace transform in order to analyze the Mahler volume in the space of projective images of $K$. The Laplace transform was also used
in \cite{K_quarter} in order to prove the isomorphic version of the slicing problem. Here we observe the following
variant of the result from \cite{K_quarter}:

\begin{theorem} Let $K \subseteq \RR^n$ be a convex body with barycenter at the origin and let $0 < \eps < 1/2$. Then there exists a convex
	body $T \subseteq \RR^n$ such that the following hold:
	\begin{enumerate}
		\item[(i)] $\displaystyle (1 - \eps) K \subseteq T \subseteq (1 + \eps) K$.
		\item[(ii)] The polar body $T^{\circ}$ is a translate of $K^{\circ}$, i.e., $T^{\circ} = K^{\circ} - y$ for some $y \in \i(K^{\circ})$.
		\item[(iii)] $\displaystyle L_T < C / \sqrt{\eps}$, where $C > 0$ is a universal constant.
	\end{enumerate}
\label{thm_903}
\end{theorem}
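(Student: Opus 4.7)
The plan is to realize $T$ as $T_y := (K^\circ - y)^\circ$ for a suitable $y \in \i(K^\circ)$ close to the origin, so that (ii) is automatic. Denote by $\|\cdot\|_K$ the Minkowski functional of $K$, so that $h_{K^\circ}(x) = \|x\|_K$. A short computation with support functions (using $h_{T_y}(x) = h_{K^\circ - y}(x) = \|x\|_K - \langle y, x\rangle$) shows that $T_y \subseteq (1+\eps)K$ is equivalent to $y \in \tfrac{\eps}{1+\eps}K^\circ$, and $(1-\eps)K \subseteq T_y$ to $-y \in \tfrac{\eps}{1-\eps}K^\circ$. Setting $\eps':=\eps/(1+\eps)$ and $S := K^\circ\cap(-K^\circ)$, condition (i) thus holds for every $y\in \eps' S$, and the problem reduces to finding such a $y$ satisfying (iii).

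For (iii) I would follow the Laplace-transform approach of \cite{K_quarter}. For $y \in \i(K^\circ)$ the function $x\mapsto \|x\|_K-\langle y,x\rangle$ is the Minkowski functional of $T_y$, so
\[
\Phi(y) \,:=\, \int_{\RR^n} e^{\langle y, x\rangle - \|x\|_K}\,dx \,=\, \int_{\RR^n} e^{-\|x\|_{T_y}}\,dx
\]
is well-defined, $\log\Phi$ is strictly convex on $\i(K^\circ)$, and polar-coordinate integration over $T_y$ (using $\int_0^\infty r^k e^{-r}\,dr = k!$) gives the identities
\[
\Phi(y) = n!\,Vol_n(T_y), \qquad \nabla \log \Phi(y) = (n+1)\,b_{T_y},
\]
where $b_{T_y}$ denotes the barycenter of $T_y$, together with the matrix inequality
\[
\cov(T_y) \,\leq\, \tfrac{1}{(n+1)(n+2)}\,\nabla^2 \log \Phi(y)
\]
(the difference being the positive semi-definite rank-one matrix $\tfrac{1}{n+2}\,b_{T_y}b_{T_y}^T$). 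Combining these yields
\[
L_{T_y}^{2n} \,\leq\, \frac{(n!)^2}{((n+1)(n+2))^n} \cdot \frac{\det \nabla^2\log \Phi(y)}{\Phi(y)^2}. \qquad(\star)
\]
Since $K$ is centered, $\nabla \log\Phi(0) = (n+1)\,b_K = 0$, so $0$ is the global minimum of the convex function $\log\Phi$ and $\Phi(y)\ge n!\,Vol_n(K)$ throughout $\i(K^\circ)$.

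The final step is a pigeonhole over $\eps' S$. Strict convexity of $\log \Phi$ makes $y\mapsto \nabla\log\Phi(y)$ injective, so by change of variables $\int_{\eps' S}\det \nabla^2\log\Phi(y)\,dy = Vol_n(\nabla\log\Phi(\eps' S))$. For $y\in \eps' S \subseteq \eps K^\circ$, convexity of $K^\circ$ gives $K^\circ - y \supseteq (1-\eps)K^\circ$, hence $T_y\subseteq (1-\eps)^{-1}K$ and $\nabla\log\Phi(y) = (n+1)b_{T_y}\in\tfrac{n+1}{1-\eps}K$. A pigeonhole therefore produces $y_*\in \eps' S$ with
\[
\det\nabla^2\log\Phi(y_*)\,\leq\, \left(\tfrac{n+1}{1-\eps}\right)^n \frac{Vol_n(K)}{(\eps')^n\,Vol_n(S)},
\]
which, combined with $(\star)$ and $\Phi(y_*)\geq n!\,Vol_n(K)$, gives
\[
L_{T_{y_*}}^{2n} \,\leq\, \frac{1}{(n+2)^n(1-\eps)^n(\eps')^n\,Vol_n(K)\,Vol_n(S)}.
\]
The remaining ingredient is $Vol_n(K)\,Vol_n(S) \geq (c/n)^n$ for a universal $c>0$: since $S^\circ = \Conv(K\cup -K) \subseteq K-K$, Rogers--Shephard gives $Vol_n(S^\circ)\leq 4^n\,Vol_n(K)$, while the Bourgain--Milman inequality applied to the centrally symmetric body $S^\circ$ yields $Vol_n(S^\circ)\,Vol_n(S)\geq (c_0/n)^n$. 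Substituting and using $\eps'\geq \eps/2$ and $1-\eps\geq 1/2$ then produces $L_{T_{y_*}}\leq C/\sqrt{\eps}$, as required. The main obstacle, in my view, is the correct direction of the matrix inequality in $(\star)$: since $T_y$ is not a priori centered, $\cov(T_y)$ is not a scalar multiple of $\nabla^2\log\Phi(y)$, and keeping the inequality in the right direction is what prevents a polynomial-in-$n$ loss in the final estimate.
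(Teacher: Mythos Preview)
Your argument is correct. Both your proof and the paper's follow the Laplace-transform/pigeonhole strategy of \cite{K_quarter}, but the implementations differ. The paper lifts everything to the cone $V=\{(t,tx):t\ge 0,\ x\in K\}\subset\RR^{n+1}$ and works with $\Phi_V$ there: it averages $\det\nabla^2\Phi_V$ over the set $(y_0+V^*)\cap((1+\eps)y_0-V^*)$, bounds the volume of this set via the self-convolution function $\Psi_{V^*}$ (Proposition~\ref{lem_822}, which rests on a Milman--Pajor inequality), and brings in Bourgain--Milman through the ``commutation relation'' of Corollary~\ref{lem_842}; the image of the gradient map is controlled by Lemma~\ref{lem_1120}. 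A bonus of the cone setting is Lemma~\ref{lem_208}, which gives the \emph{equality} $\det\nabla^2\Phi_V(y)=\kappa_n\,L_{K_y}^{2n}\,e^{2\Phi_V(y)}$, whereas in $\RR^n$ you only get the inequality $(\star)$ after discarding the rank-one term $bb^*$; this is harmless here and not really the ``main obstacle'' you flag, since $A\le B\Rightarrow\det A\le\det B$ immediately gives the correct direction. Your route stays entirely in $\RR^n$, averages over $\eps'(K^\circ\cap(-K^\circ))$, controls the image of $\nabla\log\Phi$ by the elementary inclusion $T_y\subseteq(1-\eps)^{-1}K$, and replaces the self-convolution estimate by Rogers--Shephard applied to $\Conv(K\cup -K)\subseteq K-K$ together with Bourgain--Milman for the centrally-symmetric body $S$. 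This is more self-contained and avoids the cone machinery altogether; the paper's version, in exchange, slots into the framework developed in Sections~\ref{sec2} and 6--7 and keeps track of constants a bit more tightly. One small notational slip: when you write $h_{T_y}(x)=\|x\|_K-\langle y,x\rangle$ you mean the Minkowski functional $\|x\|_{T_y}$ (equivalently $h_{T_y^\circ}$), not the support function of $T_y$; the formula and the consequences you draw from it are nonetheless correct.
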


We say that two convex sets $K_1 \subseteq \RR^{n_1}$ and $K_2 \subseteq \RR^{n_2}$ are {\it affinely equivalent}
if there exists an affine map $T: \RR^{n_1} \rightarrow \RR^{n_2}$, one-to-one on the set $K_1$, with $K_2 = T(K_1)$.
 It is a curious fact that  the quantity $$ L_K^2 \cdot s(K)^{1/n} $$ attains the same value $1/(n+2)$ when $K \subseteq \RR^n$ is an ellipsoid and when
$K \subseteq \RR^n$ is a simplex, see Alonso--Guti\'errez \cite{Alonso}, where in these two examples and also in the next one we assume that the barycenter of $K$ lies at the origin. Intriguingly,
\begin{equation} L_K^2 \cdot s(K)^{1/n} = 1 / (n+2)
\label{eq_1040} \end{equation}  also when $n = \ell (\ell+1) / 2 - 1$ and $K \subseteq \RR^n$ is affinely equivalent to the collection
of all symmetric, positive semi-definite $\ell \times \ell$ matrices of trace one. This is not a mere coincidence.
A common feature amongst these three examples is that they are hyperplane sections of {\it convex homogeneous cones}.

\smallskip
A convex cone in $\RR^{n+1}$ is a convex subset $V$ such that $\lambda x \in V$ for any $x \in V$ and $\lambda \geq 0$.
We say that a convex cone $V \subseteq \RR^{n+1}$ is proper if it is closed, has a non-empty interior, and does not contain a full line.
A convex cone $V \subseteq \RR^{n+1}$ is
 {\it homogeneous} if for any $x, y \in \i(V)$ there is an invertible,
linear transformation $T: \RR^{n+1} \rightarrow \RR^{n+1}$ with $T(x) = y$ and $$ T(V) = V. $$
Recall that the Santal\'o point of a convex body $K \subseteq \RR^n$ is the unique point $x_0$ in the interior of $K$
such that the barycenter of $(K - x_0)^{\circ}$ lies at the origin. See, e.g., Schneider \cite[Section 10.5]{sch} for information about the Santal\'o point.

\begin{theorem} Let $K \subseteq \RR^n$ be a convex body that is affinely equivalent to a hyperplane section of a
	convex, homogeneous cone in $\RR^{n+1}$. Then,
	\begin{enumerate}
		\item[(i)] The barycenter of $K$ is its Santal\'o point.
		\item[(ii)] If the barycenter of $K$ lies at the origin, then
		\begin{equation} \cov(K^{\circ}) = (n+2)^{-2} \cdot \cov(K)^{-1},
		\label{eq_1030} \end{equation}
		and consequently $L_K \cdot L_{K^{\circ}} \cdot s(K)^{1/n} = 1 / (n+2)$.
	\end{enumerate}
	\label{prop_356}
\end{theorem}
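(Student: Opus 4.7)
The plan is to realize $K$ (up to affine equivalence) as a hyperplane section $K_\eta := V \cap H_\eta$, where $V \subseteq \RR^{n+1}$ is a proper homogeneous cone, $\eta \in \i(V^*)$, and $H_\eta = \{x \in \RR^{n+1} : \langle \eta, x\rangle = 1\}$, and then to analyze the pair $(K_\eta, K_p^*)$, with $K_p^* := V^* \cap \{\langle \cdot, p\rangle = 1\}$, through the cone Laplace transform
\[
\Psi_{V^*}(\eta) = \int_V e^{-\langle \eta, x\rangle} dx, \qquad \Psi_V(x) = \int_{V^*} e^{-\langle x, \xi\rangle} d\xi.
\]
Using cone coordinates $x = tz$ with $z \in K_\eta$ and $t \geq 0$, the probability measure $\Psi_{V^*}(\eta)^{-1} e^{-\langle \eta,\cdot\rangle}\mathbf{1}_V dx$ is the law of $TZ$ where $T \sim \Gamma(n+1,1)$ and $Z \sim \mathrm{Unif}(K_\eta)$ are independent. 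Reading off the mean and covariance of the product $TZ$ gives
\[
-\nabla \log \Psi_{V^*}(\eta) = (n+1)p, \qquad \Hess \log \Psi_{V^*}(\eta) = (n+1)(n+2)\cov(K_\eta) + (n+1) pp^T,
\]
with $p = \mathrm{bar}(K_\eta)$, and the symmetric statements for $\Psi_V$ at any $q \in \i(V)$ in terms of $K_q^*$.

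The homogeneity of $V$ enters only through Vinberg's theorem: the gradient maps $-\nabla \log \Psi_V$ and $-\nabla \log \Psi_{V^*}$ are mutually inverse diffeomorphisms $\i(V) \leftrightarrow \i(V^*)$. Combined with the log-homogeneity $\Psi_{V^*}(\lambda \eta) = \lambda^{-(n+1)} \Psi_{V^*}(\eta)$, this yields, for $p = \mathrm{bar}(K_\eta)$, the matched identity $\eta = \mathrm{bar}(K_p^*)$ and the second-order relation
\[
\Hess \log \Psi_V(p) \cdot \Hess \log \Psi_{V^*}(\eta) = (n+1)^2 \Id.
\]

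For part (i), a short direct computation shows that the polar $(K_\eta - p)^\circ \subseteq \eta^\perp$ is mapped bijectively onto $K_p^* \subseteq H_p^*$ by the affine map $A(y) = (1 + \langle y, p\rangle)\eta - y$; in particular $A(0) = \eta$. Thus the barycenter of $(K_\eta - p)^\circ$ vanishes precisely when the barycenter of $K_p^*$ equals $\eta$, which is exactly what the previous paragraph gives when $p = \mathrm{bar}(K_\eta)$. Hence the Santal\'o point of $K$ coincides with its barycenter.

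For part (ii), I substitute the two Hessian formulas into the Vinberg identity and expand. Because $\cov(K_\eta)\eta = 0$ and $\cov(K_p^*) p = 0$ (both covariances vanish in the direction normal to their respective supporting hyperplanes), and $\langle \eta,p\rangle = 1$, the cross and rank-one terms collapse and the identity reduces to
\[
(n+2)^2 \cov(K_\eta) \cov(K_p^*) = \Id - p\eta^T.
\]
Restricting both sides to $\eta^\perp$ kills the correction $p\eta^T$, giving $(n+2)^2 \cov(K_\eta)\cov(K_p^*) = \Id$ as operators on $\eta^\perp$. Transferring $\cov(K_p^*)$ to $\cov(K^\circ)$ through the linear part of $A$, which up to a sign is the inverse of the orthogonal projection $\pi_{\eta^\perp}|_{p^\perp}: p^\perp \to \eta^\perp$, then yields the desired matrix identity $\cov(K^\circ) = (n+2)^{-2} \cov(K)^{-1}$ on $\eta^\perp$, and the equality $L_K \cdot L_{K^\circ} \cdot s(K)^{1/n} = 1/(n+2)$ follows by taking determinants and unpacking \eqref{eq_212}. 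The principal obstacle is the invocation of Vinberg's identity, where the transitive action of $\mathrm{Aut}(V)$ on $\i(V)$ is genuinely used; the remaining steps are bookkeeping with cone coordinates and the orthogonal decompositions $\RR^{n+1} = \eta^\perp \oplus \RR\eta = p^\perp \oplus \RR p$.
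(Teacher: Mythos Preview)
Your proof is correct and rests on the same core identity as the paper's: for a homogeneous cone, the gradient maps of the two Laplace transforms are mutual inverses, so the product of their Hessians is a scalar multiple of the identity. The packaging differs in two respects. First, the paper does not cite Vinberg's theorem; instead it introduces $J = \Phi_{V^*} - \Phi_V^*$ (the Legendre dual, not the dual-cone Laplace transform), observes that both terms shift by $-\log|\det T|$ under $T \in \a(V)$, and concludes from transitivity that $J$ is constant---this \emph{is} a proof of Vinberg's identity, so the paper is self-contained where you invoke an external reference. Second, the paper avoids your explicit affine map $A$ by choosing coordinates in which $K_{-e} = \{1\}\times K$ and $T_e = \{-1\}\times K^{\circ}$ from the outset (Lemma~\ref{lem_256}); the polar identification is then trivial, and the vanishing of $\nabla J(e)$ and $\nabla^2 J(e)$ reads off $\b(K^{\circ})=0$ and $(n+2)\cov(K^{\circ}) = (n+2)^{-1}\cov(K)^{-1}$ directly. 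Your route trades this coordinate economy for a more invariant computation with the projection $\pi_{\eta^\perp}|_{p^\perp}$, which is perfectly fine but slightly longer. One minor point: in your displayed Hessian product you wrote the factors in one order but expanded in the other; the final identity $(n+2)^2\cov(K_\eta)\cov(K_p^*) = \Id - p\eta^T$ is correct as stated.
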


Thus, for instance, (\ref{eq_1040}) also holds true  for the convex set
that consists of all positive-definite Hermitian or quaternionic-Hermitian matrices  of trace one. In all of the examples above (ball, simplex and convex collections of matrices) the convex body $K^{\circ}$ has turned out to be a linear image of $K$ as the cone is self-dual, thus $L_K = L_K^{\circ}$. There exist homogeneous cones that are not self-dual (see, e.g., Vinberg \cite{Vin} and references therein),
yet we see from Theorem \ref{prop_356} that the barycenters and covariance matrices
of hyperplane sections of homogeneous cones automatically satisfy a certain duality property.
Similar duality properties apply for higher moments as well.

\smallskip The relation (\ref{eq_1030}) between the covariance of a convex body and the covariance of the polar body reminds us of the quantity
\begin{equation}
\phi(K) = \EE\langle X, Y \rangle^2 = \frac{1}{s(K)} \int_K \int_{K^{\circ}} \langle x, y \rangle^2 dx dy \label{eq_226} \end{equation}
considered by Kuperberg \cite{Ku}. Here, $X$ and $Y$ are independent random vectors, $X$ is uniformly distributed  in the convex body $K$,
while $Y$ is uniformly distributed  in $K^{\circ}$.
Assume that $K \subseteq \RR^n$ satisfies the assumptions of Theorem \ref{prop_356}(ii). Then
by the conclusion of Theorem \ref{prop_356},
$$ \phi(K) = \Tr[ \cov(K) \cdot \cov(K^{\circ})] = n / (n+2)^2. $$
It was shown by Kuperberg \cite{Kuper_private} that the Euclidean ball is a local maximizer of the functional  $K \mapsto \phi(K)$ in the class of centrally-symmetric convex bodies in $\RR^n$ with a $C^2$-smooth boundary endowed with the natural topology.
Conjecture 5.1 in \cite{Ku} suggests that this local maximum is in fact a global one, i.e., that  $$ \phi(K) \leq n / (n+2)^2 $$
for any centrally-symmetric convex body $K \subseteq \RR^n$.
This conjecture was verified
in the case where $K$ is the unit ball of $\ell_p^n$ for $1 \leq p \leq \infty$, see Alonso-Guti\'errez \cite{Alonso}.
Relations of $\phi(K)$ to the slicing problem were observed by Giannopoulos in \cite{Gian_phd}, where it was shown that $\phi(K) \leq C / n$ when $K \subseteq \RR^n$ is a body of revolution and $C > 0$ is a universal constant.

\smallskip
Nevertheless, Kuperberg described Conjecture 5.1 in \cite{Ku} as ``perhaps less likely''.
Our next proposition shows that  this conjecture is indeed false in a sufficiently high dimension. A convex body $K \subseteq \RR^n$ is {\it unconditional} if for any $x \in \RR^n$,
$$ x = (x_1,\ldots,x_n) \in K \qquad \Longleftrightarrow \qquad (|x_1|, \ldots, |x_n|) \in K. $$

\begin{proposition} For any $n \geq 1$ there exists an unconditional, convex body $K \subseteq \RR^n$ with
\begin{equation} \frac{1}{s(K)} \cdot \int_K x_1^2 dx \cdot \int_{K^{\circ}} x_1^2 dx \geq c, \label{eq_425}
\end{equation}
	where $c > 0$ is a universal constant. In particular, $\phi(K) \geq c$.	
	\label{prop_1157}
\end{proposition}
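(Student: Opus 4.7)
The plan is to exhibit, for each $n\ge 1$, an unconditional convex body $K=K_n\subseteq\RR^n$ in which the $e_1$-direction alone contributes a constant to $\phi(K)$. Unconditionality forces all mixed second moments to vanish, so
\[
\frac{1}{s(K)}\int_K x_1^2\,dx\cdot\int_{K^{\circ}}y_1^2\,dy \;=\;\EE[X_1^2]\cdot\EE[Y_1^2]
\]
for $X,Y$ uniform on $K,K^{\circ}$ respectively, and the proposition reduces to finding $K$ for which this single-coordinate product is bounded below by a universal constant.

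The $e_1$-marginals of the uniform measures on $K$ and on $K^{\circ}$ are symmetric log-concave densities on intervals $[-T,T]$ and $[-1/T,1/T]$, where $T=h_K(e_1)$ and the identity $h_K(e_1)h_{K^{\circ}}(e_1)=1$ is a consequence of unconditionality. Since the variance of a symmetric log-concave density on $[-L,L]$ is at most $L^2/3$ (with the uniform density saturating), one has $\EE[X_1^2]\EE[Y_1^2]\le 1/9$, leaving ample room. The difficulty is that the natural examples (cubes, ellipsoids, cross-polytopes, $\ell_p$-balls, Hanner polytopes, rotationally symmetric bodies of the form $\{|x_1|^p+\|x'\|_2^q\le 1\}$) all yield a per-direction product of only order $1/n^2$: cylindricity of $K$ in the $e_1$-direction (needed to make $\EE[X_1^2]$ of order $T^2$) forces $K^{\circ}$ to be a double cone whose $e_1$-marginal density is proportional to $(1-|s|)^{n-1}$ and thus $\EE[Y_1^2]=O(1/(n^2 T^2))$.

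To break this $1/n^2$ barrier I would construct $K_n$ as a body of the form
\[
K=\bigl\{(t,x')\in[-T,T]\times\RR^{n-1}:\;x'\in r(t)\,K_0\bigr\},
\]
where $r:[-T,T]\to[0,\infty)$ is symmetric and concave and $K_0\subseteq\RR^{n-1}$ is an auxiliary unconditional body, both chosen carefully as functions of $n$. For this family one computes $\EE[X_1^2]$ directly from the cross-section volume $r(t)^{n-1}\mathrm{Vol}_{n-1}(K_0)$, while $\EE[Y_1^2]$ is obtained from
\[
h_K(y_1,y')=\sup_{t\in[-T,T]}\bigl(ty_1+r(t)\,h_{K_0}(y')\bigr),
\]
which produces an explicit closed-form description of $K^{\circ}$, hence of its $e_1$-marginal. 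One then tunes $(r,K_0)$ so that both one-dimensional marginals have variance comparable to the square of the half-width in their respective directions.

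The main obstacle is identifying the right pair $(r,K_0)$. Rotationally symmetric bases $K_0$ (for instance, a Euclidean ball in $\RR^{n-1}$) inherit symmetric polars and reproduce the $1/n^2$ scaling, so the construction must involve a genuinely anisotropic unconditional $K_0$ whose $(n-1)$-dimensional polar contributes a true slab --- rather than a sharp apex --- to $K^{\circ}$ along the $e_1$-direction. Once such $(r,K_0)$ is found, the proof is completed by evaluating two one-dimensional integrals and verifying that their product is bounded below by an explicit positive constant independent of $n$; the corresponding bound on $\phi(K)$ then follows at once from the decomposition $\phi(K)=\sum_{i}\EE[X_i^2]\EE[Y_i^2]$.
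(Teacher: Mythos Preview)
Your plan has a structural flaw that blocks the whole argument. Within the ansatz
\[
K=\bigl\{(t,x')\in[-T,T]\times\RR^{n-1}:\;x'\in r(t)\,K_0\bigr\},
\]
the polar body is automatically of the very same type: a short computation with the support function shows
\[
K^{\circ}=\bigl\{(s,y'):\;y'\in\rho(s)\,K_0^{\circ}\bigr\},\qquad
\rho(s)=\inf_{|t|\le T}\frac{1-ts}{r(t)}.
\]
Consequently
\[
\EE[X_1^2]=\frac{\int t^2 r(t)^{n-1}\,dt}{\int r(t)^{n-1}\,dt},\qquad
\EE[Y_1^2]=\frac{\int s^2 \rho(s)^{n-1}\,ds}{\int \rho(s)^{n-1}\,ds},
\]
and the factor $Vol_{n-1}(K_0)$ (respectively $Vol_{n-1}(K_0^{\circ})$) cancels in each ratio. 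So the product $\EE[X_1^2]\,\EE[Y_1^2]$ depends only on the profile $r$ and on $n$, and \emph{not at all} on the base body $K_0$. In particular it coincides with the value obtained for the body of revolution with $K_0=B_2^{\,n-1}$. But for bodies of revolution the paper itself recalls Giannopoulos' bound $\phi(K)\le C/n$, and since the $e_1$-term is a nonnegative summand of $\phi(K)$, one gets $\EE[X_1^2]\,\EE[Y_1^2]\le C/n$ for every concave $r$. Thus no choice of $(r,K_0)$ in your framework can yield a dimension-free lower bound; the sentence ``the construction must involve a genuinely anisotropic unconditional $K_0$'' is precisely the step that fails.

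What the paper does is to let the \emph{shape} of the cross-section vary with $t$, not merely its scale. Concretely, it takes $K_0=B_1^{\,n-1}$, sets $K_1=K_0\cap\sqrt{3/n}\,B_2^{\,n-1}$, and uses the Minkowski interpolation
\[
K=\bigl\{(t,x'):\,|t|\le 1,\ x'\in(1-|t|)K_0+|t|K_1\bigr\}.
\]
Because $Vol_{n-1}(K_1)\ge\tfrac13Vol_{n-1}(K_0)$ (Chebyshev on $B_1^{n-1}$), every slice of $K$ has volume comparable to $Vol_{n-1}(K_0)$, forcing $\EE[X_1^2]\ge c$. On the dual side, the slice of $K^{\circ}$ at height $s$ equals $K_0^{\circ}\cap(1-|s|)K_1^{\circ}$; here $K_0^{\circ}=B_\infty^{\,n-1}$ and $K_1^{\circ}=\mathrm{conv}(B_\infty^{\,n-1},\sqrt{n/3}\,B_2^{\,n-1})$. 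A probabilistic lemma shows that a uniformly random point of the cube lies, with constant probability, in $\tfrac12 B_\infty^{\,n-1}+\tfrac12\sqrt{n/3}\,B_2^{\,n-1}$, hence in $(1-|s|)K_1^{\circ}$ for $|s|\le 1/20$. This gives $\EE[Y_1^2]\ge c'$ as well. The essential new idea---absent from your outline---is that the cross-sections must interpolate between two \emph{different} bodies, chosen so that the polar's cross-sections are intersections rather than homothets and thereby retain a constant fraction of the volume near $s=0$.
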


The example of
Proposition \ref{prop_1157} is optimal up to a universal constant as clearly $\phi(K) \leq 1$ for any centrally-symmetric convex body $K$ in any dimension.
We say that $K \subseteq \RR^n$ is $1$-symmetric
or that it has the symmetries of the cube if $K$ is unconditional and furthermore for any permutation $\sigma \in S_n$ and a point $x \in \RR^n$,
$$ x = (x_1,\ldots,x_n) \in K \qquad \Longleftrightarrow \qquad (x_{\sigma(1)}, \ldots, x_{\sigma(n)}) \in K. $$
When the convex body
$K \subseteq \RR^n$ has the symmetries of the cube,
 not only do we know that $\EE \langle X, Y \rangle^2 \leq C / n$,
but we may also prove that the random variable $\langle X, Y \rangle$ is approximately Gaussian when the dimension $n$ is large. This
is a corollary of the results of \cite{k_ptrf}.
Thus, perhaps Kuperberg's conjecture or even the stronger versions from \cite{Ku} hold true in the case of $1$-symmetric convex bodies.

\smallskip Theorem 1.1 and Theorem 1.4 are proven in Sections 2 and 3. Section 4 discusses some examples,
while Proposition 1.5 is proven in Section 5. In Sections 6 and 7 we prove Theorem 1.3.
We continue with some notation and conventions that will be used below.
The relative interior of a convex set $K \subseteq X$ is its interior relative to the affine subspace
spanned by $K$. We abbreviate $A + B = \{ a + b \, ; \, a \in A, b \in B \}$
and $x + A = \{ x + a \, ; \, x \in A \}$. We write $\langle x, y \rangle$ or $x \cdot y$
for the standard scalar product of $x,y \in \RR^n$, and $|x| = \sqrt{\langle x, x \rangle}$.
We denote by $A^* \in \RR^{m \times n}$ the transpose of a matrix $A \in \RR^{n \times m}$.
A smooth function is $C^{\infty}$-smooth and we write $\log$ for the natural logarithm.

\smallskip
{\it Acknowledgements.} I would like to thank Ronen Eldan, Apostolos Giannopoulos, Greg Kuperberg and Alexander Magazinov
for interesting, related discussions. Supported partially by a grant from the European Research Council (ERC) and by the  Mathematical Sciences Research Institute (MSRI)
via
 grant DMS-1440140 of the National Science Foundation (NSF).

\section{Mahler volumes through the Laplace transform}
\label{sec2}

In this section we discuss basic properties of Mahler volumes, the logarithmic Laplace
transform and its Legendre transform.
Let $K \subseteq \RR^n$ be a compact, convex set, and let $p$ be a point belonging to the relative interior of $K$.
The Mahler
volume of $K$ with respect to the point $p$, denoted by
$$ s_p(K) \in (0, \infty), $$
is defined as follows:
There exists a convex body $K_1 \subseteq \RR^{n_1}$ and an affine map $T: \RR^n \rightarrow \RR^{n_1}$ which is a bijection from $K$ to $K_1$  with $T(p) = 0$. We may now
set $s_p(K) := s(K_1)$, and observe that this definition does not depend on the choice of $K_1$ and the map $T$.
Clearly when the origin belongs to the interior of the convex body $K$, we have
$s(K) = s_0(K)$. For a compact, convex set $K \subseteq \RR^n$ we define
$$ \bar{s}(K) = s_{p_K}(K),
$$
where $p_K$ is the Santal\'o point of $K$. It is well-known (see, e.g., Schneider \cite[Section 10.5]{sch}) that
\begin{equation}
 \bar{s}(K) = \inf_{p} s_p(K)
  \label{eq_839}
\end{equation}
  where the infimum runs over all points $p$ in the relative interior of $K$. Moreover, the infimum in (\ref{eq_839})
  is attained at a unique point, the Santal\'o point of $K$, which is in fact the only local minimum of the functional $p \mapsto s_p(K)$.
When  $V \subset \RR^{n+1}$ is a proper, convex cone,
its dual cone is defined via
$$ V^* = \left \{ y \in \RR^{n+1} \, ; \, \forall x \in V,  \langle x, y \rangle \leq 0 \right \}. $$
The dual cone $V^*$ is again a proper, convex cone, and additionally $(V^*)^* = V$.

\begin{lemma} Let $V \subset \RR^{n+1}$ be a proper, convex cone. Let $x_0 \in \i(V)$
	and  $y_0 \in \i(V^*)$ satisfy $\langle x_0, y_0 \rangle = -1$.
Then,
\begin{equation}  s_{x_0}(K) = s_{y_0}(T) = \frac{1}{(n!)^2} \int_V e^{\langle x, y_0 \rangle} dx \cdot \int_{V^*} e^{\langle x_0, y \rangle} dy
\label{eq_1116} \end{equation}
where $K = \{ x \in V \, ; \, \langle x, y_0 \rangle = -1 \}$ and $T = \{ y \in V^* \, ; \, \langle x_0, y  \rangle = - 1 \}$.
\label{lem_1127}
\end{lemma}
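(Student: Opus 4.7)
The plan is to compute both the Laplace product and $s_{x_0}(K)$ in terms of the intrinsic Hausdorff measures of $K$ and $T$, so that the identity reduces to a single Jacobian calculation.

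First, a cone-coordinate change of variable: every $x \in V \setminus \{0\}$ is uniquely written as $x = tz$ with $t = -\langle x, y_0\rangle > 0$ and $z \in K$. The Jacobian of $(t, z) \mapsto tz$ from $(0,\infty) \times K$ to $V$ equals $t^n/|y_0|$, the factor $1/|y_0|$ arising because the component of any $z \in K$ orthogonal to the hyperplane $\{\langle \cdot, y_0\rangle = -1\}$ equals $-y_0/|y_0|^2$, of length $1/|y_0|$. Consequently
\[
\int_V e^{\langle x, y_0\rangle}\,dx \;=\; \frac{1}{|y_0|}\int_0^\infty t^n e^{-t}\,dt\cdot \mathrm{Vol}_n(K) \;=\; \frac{n!\,\mathrm{Vol}_n(K)}{|y_0|},
\]
where $\mathrm{Vol}_n$ is the $n$-dimensional Hausdorff measure in $\RR^{n+1}$. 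The analogous identity $\int_{V^*} e^{\langle x_0, y\rangle}\,dy = n!\,\mathrm{Vol}_n(T)/|x_0|$ follows by exchanging the roles of $V$ and $V^*$.

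Second, since the Mahler volume is $GL$-invariant, I compute $s_{x_0}(K)$ using the Euclidean structure the ambient space induces on the hyperplane containing $K$: setting $W := y_0^\perp$, one has $s_{x_0}(K) = \mathrm{Vol}_n(K) \cdot \mathrm{Vol}_n((K - x_0)^{\circ_W})$, with the polar taken inside $W$. The key geometric step is to identify this polar with $T - y_0$ via orthogonal projection $\Pi : U \to W$, where $U := x_0^\perp$. Indeed, for $v \in U$, the condition $y_0 + v \in V^*$ unfolds (using $V = \{tz : t \ge 0,\, z \in K\}$) to $\langle z, v\rangle \le 1$ for every $z \in K$; and because $v \perp x_0$ and $z - x_0 \in W$, this reads $\langle z - x_0, \Pi v\rangle \le 1$. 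Thus $\Pi$ maps $T - y_0$ bijectively onto $(K - x_0)^{\circ_W}$.

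Finally, I compute the Jacobian of $\Pi$. With $(e_i)$ an orthonormal basis of $U$ and $a_i = \langle e_i, y_0\rangle$, the Gram matrix of $\Pi$ is $I - aa^T/|y_0|^2$. The matrix determinant lemma together with $|P_U y_0|^2 = |y_0|^2 - 1/|x_0|^2$ (which uses $\langle x_0, y_0\rangle = -1$) yields $|\det \Pi| = 1/(|x_0|\,|y_0|)$. Hence $\mathrm{Vol}_n((K-x_0)^{\circ_W}) = \mathrm{Vol}_n(T)/(|x_0|\,|y_0|)$, and assembling the pieces,
\[
s_{x_0}(K) \;=\; \frac{\mathrm{Vol}_n(K)\,\mathrm{Vol}_n(T)}{|x_0|\,|y_0|} \;=\; \frac{1}{(n!)^2}\int_V e^{\langle x, y_0\rangle}\,dx\cdot \int_{V^*} e^{\langle x_0, y\rangle}\,dy.
\]
The right-hand side is manifestly symmetric under $(V, x_0, K) \leftrightarrow (V^*, y_0, T)$ via $(V^*)^* = V$, so the same argument applied with the roles swapped gives the same value for $s_{y_0}(T)$. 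The one place that requires genuine computation is the Jacobian of the projection between the two skew hyperplanes $U$ and $W$; the remainder is bookkeeping in cone coordinates.
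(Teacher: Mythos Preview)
Your proof is correct. The Laplace-integral computation, the identification of $(K-x_0)^{\circ_W}$ with $\Pi(T-y_0)$, and the Gram-matrix calculation for the oblique projection $\Pi\colon U\to W$ all check out; in particular $\det G=1-|P_Uy_0|^2/|y_0|^2=1/(|x_0|^2|y_0|^2)$ gives the right scaling factor, and the pieces assemble to $\mathrm{Vol}_n(K)\,\mathrm{Vol}_n(T)/(|x_0|\,|y_0|)$ on both sides.

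The paper takes a different route: it first treats the special case $x_0=-y_0=e$ with $|e|=1$, where $K-e$ and $T+e$ live in the \emph{same} subspace $e^\perp$ and are genuine polars of each other (no projection, no Jacobian), and then reduces the general case to this one by choosing a positive-definite $P=S^*S$ with $Px_0=-y_0$ and applying the linear map $S$. In effect the paper hides your Jacobian inside the change of variables $\tilde{x}=Sx$, $\tilde{y}=(S^*)^{-1}y$, where the two determinants cancel. Your approach stays in general position throughout and pays for it with the explicit Gram computation; the paper normalizes first and gets the duality for free. Both are about the same length, but yours is arguably more transparent geometrically (one sees exactly where the factor $1/(|x_0|\,|y_0|)$ comes from), while the paper's reduction is slicker once one accepts the linear-algebra trick.
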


\begin{proof} Assume first that there exists a unit vector $e \in S^{n-1}$ with $x_0 = e = - y_0$.
By Fubini's theorem,
	\begin{equation} \int_V e^{-\langle x, e \rangle} dx   = \int_0^{\infty} \int_{\{ z \in V \, ; \, \langle z, e \rangle     = t \}}
	e^{-\langle z, e \rangle} dz dt  = \int_0^{\infty} Vol_n( t K ) \cdot e^{-t } dt  = n! \cdot Vol_n(K).
\label{eq_1102}
	\end{equation}
	By performing a similar computation for $V^*$, we see that
	\begin{equation} \int_V e^{-\langle x, e \rangle} dx \cdot \int_{V^*} e^{\langle e, y \rangle} dy  = (n!)^2 \cdot Vol_n(K) \cdot Vol_n(T) = (n!)^2 \cdot Vol_n(K_1) \cdot Vol_n(T_1),
	\label{eq_1041} \end{equation}
	where we have set $K_1 = K - e$ and $T_1 = T + e$. Note that
	 $K_1$ and $T_1$ are convex bodies in the $n$-dimensional
	linear space $E = e^{\perp} = \{ x \in \RR^n \, ; \, \langle x, e \rangle = 0 \}$.
	Both convex bodies contain the origin in their interior. Let us show that as subsets of the $n$-dimensional Euclidean space $E$,
	the two sets $K_1$ and $T_1$ satisfy
	\begin{equation}
	K_1 = T_1^{\circ}. \label{eq_1050}
	\end{equation}	
	Indeed, a given point $y \in E$ lies in $T_1$
	if and only if $ \langle y - e, x \rangle \leq 0$
	for all points $x \in V$. By homogeneity, it suffices to look only at points $x \in K$, since any $x \in V$ takes the form $x = tz$ for some $z \in K, t \geq 0$.
Thus, a given point $y \in E$ belongs to $T_1$
	if and only if for all $x \in K_1$,
	$$ 0 \geq \langle  y - e, x + e  \rangle = \langle x, y \rangle - \langle e, e \rangle = \langle x,y \rangle - 1. $$
	This proves (\ref{eq_1050}). From (\ref{eq_1041}) and (\ref{eq_1050}) we obtain
	the conclusion of the proposition for the case where $x_0 = -y_0$ is a unit vector.
	
	\smallskip We move on to discuss the general case, which will be reduced to the case analyzed above using linear algebra.
	 Since $\langle x_0, -y_0 \rangle > 0$, there exists a positive-definite, symmetric matrix $P \in \RR^{n \times n}$ with $P x_0 = -y_0$.
	We may decompose $P = S^* S$ for some invertible matrix $S \in \RR^{n \times n}$ and set
	$$ e = S x_0. $$
	Note that $S^* e = S^* S x_0 = P x_0 = -y_0$, and hence $(S^*)^{-1} y_0 = -e$. The vector $e \in \RR^{n+1}$ is a unit vector, as
$$ -1 = \langle x_0, y_0 \rangle = \langle S x_0, (S^*)^{-1} y_0 \rangle = - \langle e, e \rangle. $$
	Denoting $V_1  = S(V)$ we see that $V_1$ is a proper, convex cone with $V_1^* = (S^*)^{-1}  V^*$. 	By changing variables $\tilde{x} = S x$ and $\tilde{y} = (S^*)^{-1} y$ we
obtain
	\begin{equation}
	 \int_V e^{\langle x, y_0 \rangle} dx \cdot \int_{V^*} e^{\langle x_0, y \rangle} dy
	= \int_{V_1} e^{-\langle \tilde{x}, e \rangle} d \tilde{x} \cdot \int_{V_1^*} e^{\langle e, \tilde{y} \rangle} d \tilde{y}.
	\label{eq_1059}
	\end{equation}
	Denote $K_2 = \left \{ \tilde{x} \in V_1 \, ; \, \langle \tilde{x}, -e \rangle = -1 \right \}$
	and $T_2 = 	\left \{ \tilde{y} \in V_1^* \, ; \, \langle e, \tilde{y} \rangle = -1 \right \}$.
	We use (\ref{eq_1059}) and the case of the proposition that was already proven and deduce that
	\begin{equation}
	\frac{1}{(n!)^2} \int_V e^{\langle x, y_0 \rangle} dx \cdot \int_{V^*} e^{\langle x_0, y \rangle} dy = s_e(K_2) = s_{-e}(T_2).
	\label{eq_1107}
	\end{equation}
	However, $S(K) = K_2$ with $S(x_0) = e$ while $(S^*)^{-1}(T) = T_2$ with $(S^*)^{-1}(y_0) = -e$.
	Therefore $ s_e(K_2) = s_{x_0}(K)$ and $s_{-e}(T_2) = s_{y_0}(T)$.
	Thus (\ref{eq_1116}) follows from (\ref{eq_1107}).
\end{proof}

Let $V \subset \RR^{n+1}$ be a proper, convex cone. For $x \in \i(V)$ and $y \in \i(V^*)$ we define
\begin{equation}
 K_y =  \{ z \in V \, ; \, \langle z, y_0 \rangle = -1 \}
\qquad \text{and} \qquad T_x = \{ z \in V^* \, ; \, \langle x, z \rangle = -1 \}. \label{eq_135}
\end{equation}
The notation (\ref{eq_135}) will accompany us throughout this paper. Observe that for any $t > 0$ and $y \in V^*$,
\begin{equation}
\int_V e^{\langle t y, x  \rangle} dx  = \frac{1}{t^{n+1}} \int_V e^{\langle y, x  \rangle} dx. \label{eq_1113}
\end{equation}
By scaling, we obtain the following from Lemma \ref{lem_1127}:

\begin{proposition} Let $V \subset \RR^{n+1}$ be a proper, convex cone. Let $x_0 \in \i(V)$ and $y_0 \in \i(V^*)$ and set $r = -1/\langle x_0, y_0 \rangle$.  Then $K_{y_0}, T_{x_0} \subseteq \RR^{n+1}$ are $n$-dimensional, compact, convex sets with
\begin{equation}  s_{r x_0}(K_{y_0}) = s_{r y_0}(T_{x_0}) =\frac{(-\langle x_0, y_0 \rangle)^{n+1}}{(n!)^2} \int_V e^{\langle x, y_0 \rangle} dx \cdot \int_{V^*} e^{\langle x_0, y \rangle} dy.
\label{eq_1139} \end{equation}
  \label{cor_1137}
\end{proposition}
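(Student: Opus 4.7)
The plan is to deduce Proposition \ref{cor_1137} from Lemma \ref{lem_1127} by a rescaling. The lemma is stated only under the normalization $\langle x_0, y_0 \rangle = -1$, while here $\langle x_0, y_0 \rangle = -1/r$ can be arbitrary negative. The scaling identity (\ref{eq_1113}) together with its analogue on $V^*$ (obtained by swapping the roles of the cone and its dual, using $V^{**} = V$) will absorb the scalar $r$ into the exponential integrals. There is no real obstacle; the only bookkeeping is to check that $rx_0 \in K_{y_0}$ and $ry_0 \in T_{x_0}$, so that the Mahler volumes $s_{rx_0}(K_{y_0})$ and $s_{ry_0}(T_{x_0})$ make sense.

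First I would verify that $K_{y_0}$ and $T_{x_0}$ are indeed $n$-dimensional, compact, convex sets. Since $V$ is a proper convex cone and $y_0$ lies in the interior of $V^*$, the linear functional $x \mapsto \langle x, y_0 \rangle$ is strictly negative on $V \setminus \{0\}$, and its level set at height $-1$ intersects $V$ in a compact, $n$-dimensional, convex set (standard fact for proper cones; the assertion for $T_{x_0}$ follows by symmetry using $V^{**} = V$). The identities $\langle rx_0, y_0 \rangle = -1$ and $\langle x_0, ry_0 \rangle = -1$ then give $rx_0 \in K_{y_0}$ and $ry_0 \in T_{x_0}$, so the stated Mahler volumes are well-defined.

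Next I would apply Lemma \ref{lem_1127} with $\tilde{x}_0 := rx_0 \in \i(V)$ and $\tilde{y}_0 := y_0 \in \i(V^*)$, which satisfy the normalization $\langle \tilde{x}_0, \tilde{y}_0 \rangle = -1$. The set denoted $K$ in the lemma is $\{z \in V : \langle z, y_0 \rangle = -1\} = K_{y_0}$, while $T$ becomes $\{z \in V^* : \langle rx_0, z \rangle = -1\}$. The lemma yields
\begin{equation*}
s_{rx_0}(K_{y_0}) \;=\; \frac{1}{(n!)^2} \int_V e^{\langle x, y_0 \rangle} dx \cdot \int_{V^*} e^{\langle rx_0, y \rangle} dy.
\end{equation*}
Applying the scaling identity (\ref{eq_1113}), with the roles of $V$ and $V^*$ swapped, to the second factor gives $\int_{V^*} e^{\langle rx_0, y\rangle} dy = r^{-(n+1)} \int_{V^*} e^{\langle x_0, y\rangle} dy$, and substituting $r^{-(n+1)} = (-\langle x_0, y_0 \rangle)^{n+1}$ produces the right-hand side of (\ref{eq_1139}).

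Finally I would repeat the same step with the roles reversed, applying Lemma \ref{lem_1127} to $\tilde{x}_0 := x_0$ and $\tilde{y}_0 := ry_0$: the set $T$ now equals $T_{x_0}$, the lemma delivers $s_{ry_0}(T_{x_0})$ expressed as $(n!)^{-2}$ times $\int_V e^{\langle x, ry_0 \rangle} dx \cdot \int_{V^*} e^{\langle x_0, y \rangle} dy$, and a single use of (\ref{eq_1113}) on the first factor yields the same quantity. Both Mahler volumes thus coincide with the asserted product of Laplace integrals.
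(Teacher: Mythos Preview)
Your proof is correct and follows exactly the approach indicated in the paper, which simply states ``By scaling, we obtain the following from Lemma \ref{lem_1127}''; you have just spelled out the details of that scaling argument.
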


\begin{remark} {\rm It is possible to interpret the sets $K_{y}$ and $T_{x}$ from  (\ref{eq_135}) as  polar to each other in an appropriate sense: Set $\tilde{K} = K_{y} - x$ and $\tilde{T} = T_{x} - y$.
		Then $\tilde{K}$ is a convex body in the $n$-dimensional subspace $X = y^{\perp}$ while
		$\tilde{T}$ is a convex body in the $n$-dimensional subspace $Y = x^{\perp}$. Moreover,
		\begin{equation}
		\tilde{T} = \left \{ u \in Y \, ; \, \forall v \in \tilde{K}, \ \langle u, v \rangle \leq 1 \right \},
		\qquad \tilde{K} = \left \{ v \in X \, ; \, \forall u \in \tilde{T}, \ \langle u, v\rangle \leq 1 \right \}. \label{eq_823}
		\end{equation}
		Relation (\ref{eq_823}) is the precise duality relation that $\tilde{K}$ and $\tilde{T}$ satisfy.
		In particular, we conclude that the set $K_{y}$ is centrally-symmetric around the point $x$ if and only if the set $T_{x}$ is centrally-symmetric
		around the point $y$. Similarly, $x$ is the barycenter of $K_{y}$ if and only if $y$ is the Santal\'o point of
		$T_{x}$.
	} \label{rem_1208} \end{remark}

The logarithmic Laplace transform of the proper, convex cone  $V \subset \RR^{n+1}$ is the function
\begin{equation}  \Phi_V(y) = \log  \int_V e^{\langle y, x \rangle} dx
\qquad (y \in \RR^{n+1}). \label{eq_411}
\end{equation}
The function $\Phi_V$ is a continuous function from $\RR^{n+1}$ to
$\RR \cup \{ + \infty \}$. It attains a finite value at a point $x \in \RR^{n+1}$ if and only if $x \in \i(V^*)$.
This may be verified, for example, by using formula (\ref{eq_1102}). The function $\Phi_V$ is strictly-convex in $\i(V^*)$, as follows from the Cauchy-Schwartz inequality (see, e.g., \cite{K_quarter}).
It follows from (\ref{eq_1113}) that  $\Phi_V$ has the following homogeneity property:
For any $t > 0$,
\begin{equation}  \Phi_V(t y) = \Phi_V(y) - (n+1) \log t. \label{eq_408}
\end{equation}
Differentiating (\ref{eq_408}) with respect to $t$ we obtain the useful relation
$$ \langle \nabla \Phi_V(y), y \rangle = -(n+1). $$
Proposition \ref{cor_1137} tells us that for any $x \in \i(V)$ and $y \in \i(V^*)$,
\begin{equation}
\Phi_{V^*}(x) + \Phi_V(y) + (n+1) \log (-\langle x, y \rangle)
- 2 \log(n!) = \log s_{r x}(K_y) = \log s_{r y}(T_x), \label{eq_827}
\end{equation}
for $r = -1/\langle x, y \rangle$.
Thus, any local minimum of the Mahler volume corresponds to a local minimum
of the expression on the left-hand side of (\ref{eq_827}).
We would like to compute the first and second variations at a local minimum.
We could have proceeded by  differentiating the expression in (\ref{eq_827}) with respect to $x$ and with respect to $y$, but we find it convenient to eliminate one variable by introducing the Legendre transform.
The Legendre transform
of a convex function $\Phi: \RR^{n+1} \rightarrow \RR \cup \{ + \infty \}$
 is
\begin{equation} \Phi^*(x) = \sup_{y \in \RR^{n+1}, \Phi(y) < \infty} \left[ \langle x, y \rangle - \Phi(y) \right] \qquad
\qquad (x \in \RR^{n+1}). \label{eq_952} \end{equation}
A standard reference for the Legendre transform and convex analysis is Rockafellar \cite{roc}.
The function  $\Phi^*: \RR^{n+1} \rightarrow \RR \cup \{ + \infty \}$ is again convex. If $\Phi$ is finite in a neighborhood
of a point $x \in \RR^{n+1}$ and differentiable at the point $x$ itself, then
\begin{equation}  \Phi^*( \nabla \Phi(x) ) + \Phi(x) = \langle x, \nabla \Phi(x) \rangle. \label{eq_1126}
\end{equation}
In the case where $\Phi = \Phi_V$, the supremum
in (\ref{eq_952}) runs over $y \in \i(V^*)$. Moreover, in this case
we deduce from (\ref{eq_408}) that for any $x \in \i(V)$,
\begin{align} \label{eq_958} \Phi_V^*(x) & = \sup_{s > 0, y \in \i(V^*)} \left[ (n+1) \log s + s \langle x, y \rangle - \Phi_V(y) \right] \\ & =
(n+1) \log \left( \frac{n+1}{e} \right) + \sup_{y \in \i(V^*)} \left[ -(n+1) \cdot \log (-\langle x, y \rangle)  - \Phi_V(y) \right].
\nonumber
\end{align}
Note the difference between the function $\Phi_V^*$, which is the Legendre transform of $\Phi_V$, and the function $\Phi_{V^*}$, which is the logarithmic Laplace transform of the dual cone $V^*$.

\begin{corollary}[``Commuting the Laplace transform with convex duality''] Let $V \subset \RR^{n+1}$ be a proper, convex cone. Then the functions $\Phi_{V^*}$ and $\Phi_V^*$ attain finite values in $\i(V)$, and their difference $J := \Phi_{V^*} - \Phi_V^*$ satisfies
\begin{equation}
 J(x) = \kappa_n + \log \bar{s}(T_x) \label{eq_850} \end{equation}
for $x \in \i(V)$  where $\kappa_n = 2 \log(n!) - (n+1) \log \left(\frac{n+1}{e} \right) $. \label{lem_842}
\end{corollary}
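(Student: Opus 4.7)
The plan is to combine formula (\ref{eq_958}) for $\Phi_V^*$ with the Mahler-volume identity (\ref{eq_827}), eliminating the auxiliary variable $y$ in favour of an optimisation over the slice $T_x$. First I would start from (\ref{eq_958}) and invoke the homogeneity property (\ref{eq_408}) of $\Phi_V$ to observe that the expression
\[
-(n+1)\log(-\langle x,y\rangle)\;-\;\Phi_V(y)
\]
is invariant under the rescaling $y\mapsto ty$ for $t>0$. Consequently the supremum in (\ref{eq_958}) may be restricted to those $y\in\i(V^*)$ satisfying $\langle x,y\rangle=-1$, which is precisely the relative interior of the slice $T_x$. On this slice the bracketed expression collapses to $-\Phi_V(y)$, giving
\[
\Phi_V^*(x)\;=\;(n+1)\log\!\left(\frac{n+1}{e}\right)\;-\;\inf_{y\,\in\,\mathrm{relint}(T_x)}\Phi_V(y).
\]

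Next I would apply (\ref{eq_827}) with $r=1$, which is exactly the case $\langle x,y\rangle=-1$, so that $ry=y\in T_x$ and the logarithmic term on the left vanishes. For $y$ in the relative interior of $T_x$ the identity becomes
\[
\Phi_V(y)\;=\;2\log(n!)\;-\;\Phi_{V^*}(x)\;+\;\log s_y(T_x).
\]
Taking the infimum over $y\in\mathrm{relint}(T_x)$ and invoking the characterisation (\ref{eq_839}) of $\bar s(T_x)$ as $\inf_p s_p(T_x)$, I obtain
\[
\inf_{y\,\in\,\mathrm{relint}(T_x)}\Phi_V(y)\;=\;2\log(n!)\;-\;\Phi_{V^*}(x)\;+\;\log\bar s(T_x).
\]
Substituting this into the previous display and rearranging yields $J(x)=\Phi_{V^*}(x)-\Phi_V^*(x)=\kappa_n+\log\bar s(T_x)$, which is the claimed formula.

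Finiteness of $\Phi_V^*$ on $\i(V)$ falls out of the same computation: for $x\in\i(V)$ the slice $T_x$ is a genuine $n$-dimensional compact convex body, so $\bar s(T_x)\in(0,\infty)$, while $\Phi_{V^*}(x)$ is finite by the standing convergence criterion for the Laplace transform of a proper cone. Nothing here is a serious obstacle; the corollary is essentially a repackaging of (\ref{eq_827}) and (\ref{eq_958}), and the only item requiring care is the bookkeeping that identifies the scaling $r=-1/\langle x,y\rangle$ appearing in Proposition \ref{cor_1137} with the normalisation $\langle x,y\rangle=-1$ that places $y$ in $T_x$.
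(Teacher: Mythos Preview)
Your proof is correct and follows essentially the same approach as the paper: both arguments combine (\ref{eq_827}) with (\ref{eq_958}) and identify the resulting optimisation over $y$ with the Santal\'o infimum (\ref{eq_839}) on the slice $T_x$. The only cosmetic difference is that you use homogeneity to restrict to $\langle x,y\rangle=-1$ before invoking (\ref{eq_827}), whereas the paper keeps $y\in\i(V^*)$ general and afterwards observes that $ry=-y/\langle x,y\rangle$ sweeps out the relative interior of $T_x$.
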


\begin{proof} We already know that $\Phi_{V^*}$ attains finite values in $\i(V)$. Fix $x \in \i(V)$. From (\ref{eq_827}) and (\ref{eq_958}),
	\begin{equation} \Phi_{V^*}(x) - \left[ \Phi_V^*(x) - (n+1) \log \left(\frac{n+1}{e} \right) \right] - 2 \log(n!) = \inf_{y \in \i(V^*)} \log s_{ry} (T_x) \label{eq_1006} \end{equation}
	where $r = -1/\langle x, y \rangle$. When $y$ ranges over the set $\i(V^*)$, the 	point $r y = -y / \langle x, y \rangle$ ranges over the entire relative interior of $T_x$. From (\ref{eq_839}), the right-hand side of (\ref{eq_1006}) equals $\log \bar{s}(T_x)$, and (\ref{eq_850}) follows. The function $\bar{s}(T_x)$ attains finite values in $\i(V)$, as well as the function $\Phi_{V^*}$. By (\ref{eq_850}) the function $\Phi_{V}^*$ is also finite in $\i(V)$.
\end{proof}

Observe that for any homogeneous polynomial $p_k(x)$ of degree $k$
and for any $y \in \i(V^*)$,
\begin{align} \label{poly}
\int_V p_k(x) e^{\langle y, x \rangle} dx
& = \int_0^{\infty} \int_{\{ x \in V \, ; \, \langle x, y  \rangle   = -t |y| \}}
p_k(x) e^{\langle y, x \rangle} dx dt \\ & =  \int_0^{\infty}  (t |y| )^{k+n} e^{-t|y|} dt \cdot \int_{K_y} p_k(x) dx
= \frac{(n+k)!}{|y|} \cdot \int_{K_y} p_k(x) dx. \nonumber
\end{align}
We write $\b(K)$ for the barycenter of a convex body $K$. Recall that $\cov(K_y)$ is the covariance matrix of a random vector that is distributed uniformly in $K_y$.

\begin{lemma}[``Derivatives of $\Phi_V$''] Let $V \subset \RR^{n+1}$ be a proper, convex cone. Then for any $y \in \i(V^*)$,
\begin{equation} \nabla \Phi_V(y) = (n+1) \cdot \b(K_y) \label{eq_1117}
\end{equation}
	and the Hessian matrix is given by
\begin{equation}  \nabla^2 \Phi_V(y) = (n+2) (n+1) \cdot \cov(K_y) + (n+1) \cdot \b(K_y) \b^*(K_y), \label{eq_1118}
\end{equation}
where we view $z \in \RR^{n+1}$ as a column vector while $z^*$ is the corresponding row vector. \label{lem_1122}
\end{lemma}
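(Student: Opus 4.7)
The plan is to differentiate $\Phi_V(y) = \log \int_V e^{\langle y, x \rangle} dx$ directly, recognize the resulting moments of the exponentially tilted measure on $V$, and then translate each of them into a moment of the uniform probability measure on the slice $K_y$ via the identity (\ref{poly}) already established above.

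Standard differentiation under the integral sign---justified because $y$ lies in the open set $\i(V^*)$ where $\Phi_V$ is finite, so $|x|^2 e^{\langle y', x \rangle}$ is integrable in $x$ for all $y'$ in a small compact neighbourhood of $y$---yields the cumulant-type expressions
\[
\nabla \Phi_V(y) = \frac{\int_V x \, e^{\langle y, x \rangle} dx}{\int_V e^{\langle y, x \rangle} dx}, \qquad
\nabla^2 \Phi_V(y) = \frac{\int_V x x^* \, e^{\langle y, x \rangle} dx}{\int_V e^{\langle y, x \rangle} dx} - \nabla \Phi_V(y) \, \nabla \Phi_V(y)^*.
\]
Each numerator and denominator above is of the form $\int_V p_k(x) e^{\langle y, x \rangle} dx$ for a homogeneous polynomial $p_k$ of degree $k \in \{0, 1, 2\}$, so (\ref{poly}) converts all of them directly:
\[
\int_V e^{\langle y, x \rangle} dx = \frac{n!}{|y|} Vol_n(K_y), \qquad \int_V x_i e^{\langle y, x \rangle} dx = \frac{(n+1)!}{|y|} \int_{K_y} x_i \, dx,
\]
\[
\int_V x_i x_j e^{\langle y, x \rangle} dx = \frac{(n+2)!}{|y|} \int_{K_y} x_i x_j \, dx.
\]

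The factor $|y|^{-1}$ cancels in every ratio, leaving only moments of the uniform probability measure on $K_y$. For the gradient the factorial ratio contributes the scalar $(n+1)$, giving $\nabla \Phi_V(y) = (n+1) \b(K_y)$, which is (\ref{eq_1117}). For the Hessian, the first term becomes $(n+2)(n+1)$ times the second-moment matrix $\cov(K_y) + \b(K_y) \b^*(K_y)$, while the subtracted rank-one term equals $(n+1)^2 \b(K_y) \b^*(K_y)$; collecting and using the identity $(n+2)(n+1) - (n+1)^2 = n+1$ yields (\ref{eq_1118}). There is no serious obstacle here: once (\ref{poly}) is in hand, the lemma is pure bookkeeping, and the only mild care needed is justifying the exchange of differentiation and integration, which is itself an immediate consequence of the finiteness of $\Phi_V$ on the open set $\i(V^*)$.
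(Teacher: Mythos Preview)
Your proof is correct and follows essentially the same approach as the paper: differentiate $\Phi_V$ under the integral sign to obtain the cumulant expressions, then apply the homogeneous-moment identity (\ref{poly}) with $k=0,1,2$ to pass from moments of the exponential measure on $V$ to moments of the uniform measure on $K_y$. The only cosmetic differences are that you work in vector/matrix notation whereas the paper works componentwise, and you spell out the dominated-convergence justification that the paper merely asserts.
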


\begin{proof}  The function $\Phi_V$ is clearly smooth in $\i(V^*)$. By differentiating (\ref{eq_411}) we see that for any $i=1,\ldots,n+1$,
	$$ \frac{\partial \Phi_V(y)}{\partial y_i} = \frac{ \int_V x_i e^{\langle y, x \rangle} dx}{ \int_V e^{\langle y, x \rangle} dx } =  \frac{(n+1)!}{n!} \cdot  \frac{\int_{K_y} x_i dx}{Vol_n(K_y)}, $$
	where we used (\ref{poly}) twice in the last passage. This proves (\ref{eq_1117}). By differentiating (\ref{eq_411}) one more time we see that for $i,j=1,\ldots,n+1$,
\begin{align*}  \nonumber \frac{\partial^2 \Phi_V(y)}{\partial y_i \partial y_j} & = \frac{ \int_V x_i x_j e^{\langle y, x \rangle} dx}{ \int_V e^{\langle y, x \rangle} dx }
\, - \, \frac{ \int_V x_i e^{\langle y, x \rangle} dx}{ \int_V e^{\langle y, x \rangle} dx } \cdot \frac{ \int_V x_j e^{\langle y, x \rangle} dx}{ \int_V e^{\langle y, x \rangle} dx }
\\ & = (n+2) (n+1) \frac{\int_{K_y} x_i x_j dx}{Vol_n(K_y)} - (n+1)^2 \frac{\int_{K_y} x_i dx}{Vol_n(K_y)} \cdot \frac{\int_{K_y} x_j dx}{Vol_n(K_y)},
\end{align*}
 which is equivalent to (\ref{eq_1118}).
\end{proof}

The function $\Phi_V$ is smooth and strictly-convex in $\i(V^*)$, and it equals $+\infty$ outside $\i(V^*)$.
Moreover, $\nabla \Phi_V(y) \in \i(V)$ for any $y \in \i(V^*)$, according to (\ref{eq_1117}).
The function $\Phi_V^*$ is finite in $\i(V)$, and from  the standard theory of the Legendre transform (e.g., Rockafellar \cite[Section 23]{roc}),
for any $x \in \i(V)$ there exists $y \in \i(V^*)$ with $\nabla \Phi_V(y) = x$.

\begin{corollary} For any proper, convex cone $V \subset \RR^{n+1}$, the map $\nabla \Phi_V: \i(V^*) \rightarrow \i(V)$
is a diffeomorphism. \label{cor_diffeo}
\end{corollary}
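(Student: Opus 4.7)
The plan is to verify in turn that $\nabla \Phi_V$ is smooth, maps into $\i(V)$, is a bijection, and has a smooth inverse; the lemmas developed in this section supply nearly everything. Smoothness on $\i(V^*)$ is immediate from $\Phi_V \in C^\infty(\i(V^*))$, and the image condition comes from (\ref{eq_1117}): $(n+1)\b(K_y)$ is a positive dilate of the barycenter of the $n$-dimensional slice $K_y \subseteq V$, so it lies in the open cone $\i(V)$.

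For bijectivity, injectivity follows from strict convexity of $\Phi_V$ on $\i(V^*)$ by the standard argument: if $\nabla \Phi_V(y_1) = \nabla \Phi_V(y_2)$, the derivative of the strictly convex function $t \mapsto \Phi_V((1-t)y_1 + ty_2)$ would take equal values at $t = 0$ and $t = 1$, forcing $y_1 = y_2$. Surjectivity is the Legendre-Fenchel duality assertion already made in the paragraph preceding the corollary: since $\Phi_V^*$ is finite on $\i(V)$ by Corollary \ref{lem_842}, and $\Phi_V$ is a proper lower semi-continuous convex function (smooth on $\i(V^*)$ and $+\infty$ elsewhere), standard convex analysis guarantees that every $x \in \i(V)$ arises as $\nabla \Phi_V(y)$ for some $y \in \i(V^*)$.

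Smoothness of the inverse follows from the inverse function theorem, provided the Jacobian $\nabla^2 \Phi_V(y)$ is invertible at every $y \in \i(V^*)$. By (\ref{eq_1118}),
\begin{equation*}
\nabla^2 \Phi_V(y) = (n+2)(n+1)\cov(K_y) + (n+1)\b(K_y)\b^*(K_y).
\end{equation*}
The covariance $\cov(K_y)$ is positive semi-definite with kernel $\RR y$, since $K_y$ lies in the hyperplane $\langle \cdot, y \rangle = -1$, while $\langle \b(K_y), y \rangle = -1$ makes the rank-one correction act positively in precisely the missing direction: for $v = c y$ one computes $v^* \nabla^2 \Phi_V(y) v = (n+1) c^2$, and for $v \notin \RR y$ the covariance term alone is strictly positive. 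Hence $\nabla^2 \Phi_V(y)$ is positive definite.

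The only step requiring genuine care is this non-degeneracy of the Hessian: since $\cov(K_y)$ has rank only $n$ in the ambient $\RR^{n+1}$, the conclusion hinges on $\b(K_y)$ lying off the origin so that the rank-one $\b \b^*$ correction exactly plugs the missing $y$-direction.
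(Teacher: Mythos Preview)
Your proof is correct and follows essentially the same route as the paper: surjectivity from the finiteness of $\Phi_V^*$ on $\i(V)$ via standard Legendre duality, injectivity from strict convexity, and the diffeomorphism conclusion from positive-definiteness of $\nabla^2\Phi_V$. You supply more detail than the paper does on why the Hessian is positive definite---explicitly checking that the rank-one term $\b(K_y)\b^*(K_y)$ compensates for the kernel direction $\RR y$ of $\cov(K_y)$---whereas the paper simply attributes this to Lemma~\ref{lem_1122}.
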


\begin{proof} We have just explained that the map $\nabla \Phi_V: \i(V^*) \rightarrow \i(V)$ is onto.
Since $\Phi_V$ is strictly-convex, this map is one-to-one. The derivative of this smooth map
is the matrix $\nabla^2 \Phi_V(y)$, which is positive-definite
and hence invertible by Lemma \ref{lem_1122}.
Therefore the map $\nabla \Phi_V: \i(V^*) \rightarrow \i(V)$  is a diffeomorphism.
\end{proof}

It follows from Corollary \ref{cor_diffeo} and formula (\ref{eq_1126}) that the function $\Phi_V^*$ is differentiable in $\i(V)$ and moreover, for any $x \in \i(V)$ and $y \in \i(V^*)$,
\begin{equation}  \nabla \Phi_V^*(x) = y \qquad \Longleftrightarrow \qquad \nabla \Phi_V(y) = x. \label{eq_155}
\end{equation}
In other words, the map $\nabla \Phi_V^*$ is the inverse to the map $\nabla \Phi_V$. Consequently the Hessian matrices
are inverse to each other, that is, for any $x \in \i(V)$,
\begin{equation}  \nabla^2 \Phi_V^*(x) = \left[ \nabla^2 \Phi_V(y) \right]^{-1}, \label{eq_209}
\end{equation}
where $y = \nabla \Phi_V^*(x)$. From
Lemma \ref{lem_1122} and Corollary \ref{cor_diffeo}
we also learn that two hyperplane sections of $V$ coincide if and only if their barycenters coincide.

\section{Projective perturbations and homogeneous cones}

In this section we prove Theorem \ref{thm_1151} and Theorem \ref{prop_356}.
We say that two convex bodies $K, T \subseteq \RR^n$ are {\it projectively equivalent}
or that they are {\it projective images} of one another
if $T$
is affinely-equivalent to a hyperplane section of the cone
\begin{equation} V = \left \{ (t , t x) \in \RR \times \RR^n \, ; \, t \geq 0, x \in K  \right \}. \label{eq_125}
\end{equation}
In other words, for a certain $y \in \i(V^*)$ the set $T$ is affinely equivalent to the convex set $K_y$
from (\ref{eq_135}) that is associated with the cone $V$.

\smallskip
The family of projective images of a convex body $K \subseteq \RR^n$ with a smooth boundary
always contains bodies arbitrarily close to a Euclidean unit ball. In the case where $K \subseteq \RR^n$ is a simplicial
polytope, there are projective images of $K$ that are arbitrarily close to a simplex. A projective
image of a polytope is itself a polytope with the same number of vertices and faces.
A projective image of an ellipsoid is always an ellipsoid, and that of a simplex is
always a simplex. Our next lemma specializes the results of the previous section to
the cone defined in (\ref{eq_125}). We use $x = (t,y) \in \RR \times \RR^n$ as coordinates in $\RR^{n+1}$.

\begin{lemma} Let $K \subseteq \RR^n$ be a convex body with $\b(K) = 0$
and let $V \subset \RR^{n+1}$ be the proper, convex cone defined in (\ref{eq_125}). Denote
$J = \Phi_{V^*} - \Phi_V^*$ and $e = (1,0) \in \RR \times \RR^n \cong \RR^{n+1}$. Then,
\begin{equation}  \nabla J(e) = (n+1) \cdot \left( 0, \b(K^{\circ}) \right) \in \RR \times \RR^n \cong \RR^{n+1}. \label{eq_156}
\end{equation}
In the case where $\nabla J(e) = 0$, the Hessian matrix satisfies
\begin{equation}  \frac{1}{n+1} \cdot \nabla^2 J(e) = (n+2)  \cdot \cov(K^{\circ}) - \frac{1}{n+2} \cdot \cov(K)^{-1}.
\label{eq_201}
\end{equation}
\label{lem_256}
\end{lemma}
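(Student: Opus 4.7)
The plan is to identify the relevant hyperplane sections of $V$ and $V^*$, apply Lemma \ref{lem_1122} directly to both $\Phi_V$ and $\Phi_{V^*}$, and use the Legendre duality in (\ref{eq_155})--(\ref{eq_209}) together with the homogeneity (\ref{eq_408}) to pass from $\Phi_V$ to $\Phi_V^*$. The identity $J = \Phi_{V^*} - \Phi_V^*$ then yields (\ref{eq_156}) and (\ref{eq_201}) by subtraction.

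First I would observe that $V^* = \{(s, z) \in \RR \times \RR^n \,;\, s + \max_{x \in K} \langle z, x \rangle \leq 0\}$, so the two hyperplane sections carrying the geometry of $K$ and $K^{\circ}$ are
\[
K_{-e} = \{(t, tx) \in V \,;\, \langle (t, tx), -e \rangle = -1\} = \{1\} \times K
\]
and, viewing $e \in \i((V^*)^*) = \i(V)$,
\[
T_e = \{(s, z) \in V^* \,;\, \langle e, (s,z) \rangle = -1\} = \{-1\} \times K^{\circ}.
\]
Both are cylinders of unit height, so $\b(K_{-e}) = (1, 0) = e$ and $\b(T_e) = (-1, \b(K^{\circ}))$, while $\cov(K_{-e})$ and $\cov(T_e)$ are block-diagonal with a vanishing top-left entry and bottom-right blocks $\cov(K)$ and $\cov(K^{\circ})$ respectively.

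For the gradient, Lemma \ref{lem_1122} applied to $V^*$ at $e$ gives $\nabla \Phi_{V^*}(e) = (n+1)(-1, \b(K^{\circ}))$. For $\nabla \Phi_V^*(e)$ I use the inverse-map characterization (\ref{eq_155}): it is the unique $y \in \i(V^*)$ with $\nabla \Phi_V(y) = e$. Lemma \ref{lem_1122} gives $\nabla \Phi_V(-e) = (n+1) e$, and the homogeneity (\ref{eq_408}), which yields $\nabla \Phi_V(ty) = t^{-1} \nabla \Phi_V(y)$, forces $y = -(n+1) e$. Subtracting,
\[
\nabla J(e) = (n+1)(-1, \b(K^{\circ})) - (-(n+1), 0) = (n+1)(0, \b(K^{\circ})),
\]
which is (\ref{eq_156}).

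Assuming next that $\b(K^{\circ}) = 0$, I would compute the Hessian in the same spirit. Lemma \ref{lem_1122} applied to $V^*$, with $\b(T_e)\b^*(T_e) = \mathrm{diag}(1, 0)$, gives
\[
\nabla^2 \Phi_{V^*}(e) = (n+1) \begin{pmatrix} 1 & 0 \\ 0 & (n+2)\,\cov(K^{\circ}) \end{pmatrix}.
\]
By (\ref{eq_209}) and the differentiated homogeneity $\nabla^2 \Phi_V(ty) = t^{-2} \nabla^2 \Phi_V(y)$,
\[
\nabla^2 \Phi_V^*(e) = (n+1)^2 \bigl[\nabla^2 \Phi_V(-e)\bigr]^{-1} = (n+1) \begin{pmatrix} 1 & 0 \\ 0 & \tfrac{1}{n+2}\,\cov(K)^{-1} \end{pmatrix},
\]
where the blockwise inversion uses $\nabla^2 \Phi_V(-e) = (n+1)\,\mathrm{diag}(1, (n+2)\,\cov(K))$. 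Subtracting the two Hessians produces (\ref{eq_201}), with the top-left entry cancelling; this cancellation is forced by the $0$-homogeneity of $J$, which implies $\nabla^2 J(e) \cdot e = 0$.

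The main obstacle is essentially bookkeeping: one must carefully track the scaling factor $-(n+1)$ between $e \in \i(V)$ and its preimage $-(n+1) e \in \i(V^*)$ under $\nabla \Phi_V$. This factor squares through (\ref{eq_209}) and, combined with the coefficients $(n+2)(n+1)$ and $n+1$ appearing in Lemma \ref{lem_1122}, produces the precise $1/(n+2)$ prefactor of $\cov(K)^{-1}$ as well as the cancellation in the $e$-direction.
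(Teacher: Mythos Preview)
Your argument is correct and follows essentially the same route as the paper: identify $K_{-e}=\{1\}\times K$ and $T_e=\{-1\}\times K^{\circ}$, apply Lemma~\ref{lem_1122} at $-e$ and $e$, use the $(-1)$- and $(-2)$-homogeneity of $\nabla\Phi_V$ and $\nabla^2\Phi_V$ together with (\ref{eq_155})--(\ref{eq_209}) to pass to $\Phi_V^*$, and subtract. Your additional remark that the top-left cancellation is forced by the $0$-homogeneity of $J$ is a nice observation; one cosmetic quibble is that $K_{-e}$ and $T_e$ are flat hyperplane sections rather than ``cylinders,'' but this does not affect the computation.
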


A  remark concerning  formula (\ref{eq_201}): The left-hand side is a certain $(n+1) \times (n+1)$ matrix
$A$, while the right-hand side is an $n \times n$ matrix $B$. What we actually mean, is that
$  A = \left( \begin{array}{c|c} 0 & 0 \\ \hline 0 & B \end{array} \right)$. This is consistent with our
coordinates notation, which corresponds to the decomposition $\RR^{n+1} \cong \RR \times \RR^n$.

\begin{proof}[Proof of Lemma \ref{lem_256}] From (\ref{eq_125}) we deduce that
$$ V^* = \left \{ (-t , t x) \in \RR \times \RR^n \, ; \, t \geq 0, x \in K^{\circ}  \right \}. $$
Recalling the notation of (\ref{eq_135}) from the previous section, we see that $T_e = \{ -1 \} \times K^{\circ}$.
By applying Lemma \ref{lem_1122} with the dual cone $V^*$, we obtain
$$ \nabla \Phi_{V^*}(e) = (n+1) \cdot \b(T_e) = (n+1) \cdot \left(-1, \b(K^{\circ}) \right) \in \RR \times \RR^n.
$$
Since $K_{- e} = \{ 1 \} \times K$, when applying Lemma \ref{lem_1122} with the cone $V$ we get
\begin{equation} \nabla \Phi_V(-e) = (n+1) \cdot \b(K_{-e}) = (n+1) \cdot (1, \b(K)) = (n+1) \cdot e. \label{eq_154}
\end{equation}
Recall that $\Phi_V(t y) = \Phi_V(y) - (n+1) \log t$ according to the homogeneity relation (\ref{eq_408}).
By differentiation, we obtain $\nabla \Phi_V(t y) = \nabla \Phi_V(y) / t$, i.e., $\nabla \Phi_V$ is $(-1)$-homogeneous. Therefore, from (\ref{eq_154}),
\begin{equation}  \nabla \Phi_V(-(n+1) \cdot e) = e \qquad \text{and hence} \qquad \nabla \Phi_V^*(e) = -(n+1) e \label{eq_210}
\end{equation}
where we used (\ref{eq_155}) in the last passage. Consequently,
$$ \nabla J(e) = \nabla \Phi_{V^*}(e) - \nabla \Phi_V^*(e) = (n+1) \cdot \left(-1, \b(K^{\circ}) \right) + (n+1) e = (n+1) \cdot (0, \b(K^{\circ})) \in \RR \times \RR^n, $$
proving (\ref{eq_156}). We move on to the proof of (\ref{eq_201}). Since $\nabla J(e) = 0$, then $\b(K^{\circ}) = 0$ and $\b(T_e) = -e$.
According to Lemma \ref{lem_1122},
\begin{equation}
 \nabla^2 \Phi_V(-e) = (n+2) (n+1) \cov(K_{-e}) + (n+1) e e^* = (n+2) (n+1) \cov(K) + (n+1) e e^*
 \label{eq_206} \end{equation}
and
\begin{equation}  \nabla^2 \Phi_{V^*}(e) = (n+2) (n+1) \cov(T_e) + (n+1) e e^* = (n+2) (n+1) \cov(K^{\circ}) + (n+1) e e^*. \label{eq_211}
\end{equation}
Since $\nabla \Phi_V$ is $(-1)$-homogeneous, the Hessian $\nabla^2 \Phi_V$ is $(-2)$-homogeneous, and $\nabla^2 \Phi_V(t y) = \nabla^2 \Phi_V(y) / t^2$. From (\ref{eq_206})
we obtain
$$ \nabla^2 \Phi_V(-(n+1) \cdot e) = \frac{n+2}{n+1} \cdot \cov(K) + \frac{1}{n+1} \cdot e e^*. $$
Thanks to (\ref{eq_209}) and (\ref{eq_210}) we know that
\begin{equation}  \nabla^2 \Phi_V^*(e) = \left[ \nabla^2 \Phi_V (-(n+1) \cdot e) \right]^{-1}  = \frac{n+1}{n+2} \cdot \cov(K)^{-1} + (n+1) \cdot e e^*.
\label{eq_216} \end{equation}
Now (\ref{eq_201}) follows from (\ref{eq_211}), (\ref{eq_216}) and the fact that $J = \Phi_{V^*} - \Phi_V^*$.
\end{proof}

We proceed with a discussion and a proof of Theorem \ref{prop_356}.
Let $V \subset \RR^{n+1}$ be a proper, convex cone.
Denote by $\a(V)$ the group of all invertible, linear transformations $T$ with $T(V) = V$. Clearly $T \in \a(V)$
implies that $T^* \in \a(V^*)$ and vice versa, as for any $x \in \RR^{n+1}$,
$$ \sup_{y \in V} \langle x, y \rangle = \sup_{y \in V} \langle x, T y \rangle = \sup_{y \in V} \langle T^* x, y \rangle. $$
The symmetries of the cone $V$ manifest themselves in the Laplace transform. That is, for any $T \in \a(V)$ and $y \in \i(V^*)$,
\begin{equation}  \Phi_V(  T^* y)  = \log  \int_V e^{\langle y, T x \rangle} dx = \log  \int_V e^{\langle y,  x \rangle} dx - \log |\det T|
= \Phi_V(y) - \log |\det T|.
\label{eq_350} \end{equation}
Consequently, for any $x \in \i(V)$ and $T \in \a(V)$,
\begin{equation}  \Phi_V^*(T x) = \sup_{y \in \i(V^*)} \left[ \langle x, T^* y \rangle - \Phi_V( y)  \right] = \Phi_V^*(x) - \log |\det T|. \label{eq_351}
\end{equation}

\begin{proof}[Proof of Theorem \ref{prop_356}] We may assume that the barycenter of $K$ lies at the origin
and define $V$ as in (\ref{eq_125}). Then $V$ is a convex, homogeneous cone. From (\ref{eq_350}) and (\ref{eq_351}) we know that
for any $x \in \i(V)$ and $T \in \a(V)$,
$$ J(T x) = \Phi_{V^*}(T x) - \Phi_{V}^*( T x) = \left( \Phi_{V^*}(x) - \log | \det T| \right) - \left( \Phi_{V}^*( x) - \log |\det T| \right) = J(x). $$
However, for any $x,y \in \i(V)$ there is $T \in \a(V)$ with $T x = y$. Consequently $J: \i(V) \rightarrow \RR$ is a constant function.
In particular, the gradient and the Hessian matrix of $J$ vanish. We may now apply the computations of Lemma \ref{lem_256}.
First, we deduce that $\b(K^{\circ}) = 0$, and hence the Santal\'o point of $K$
coincides with its barycenter. Second, we obtain  $$ (n+2)  \cdot \cov(K^{\circ}) = \frac{1}{n+2} \cdot \cov(K)^{-1}. $$
In particular, $ L_K^{2n} \cdot L_{K^{\circ}}^{2n} \cdot s(K)^2 = \det \cov(K) \cdot \det \cov(K^{\circ}) = (n+2)^{-(2n)}$ by (\ref{eq_212}).
\end{proof}

\begin{remark}{\rm  A convex body $K \subseteq \RR^n$
is affinely equivalent to a hyperplane section of a homogeneous cone if and only if every projective image of $K$ is affinely equivalent to $K$. This  follows from the fact that two hyperplane sections of a proper, convex cone $V \subset \RR^{n+1}$ coincide if and only if their barycenters coincide.
A corollary is that the dual to a homogeneous cone is homogeneous in itself. These standard facts are not used in this paper. }
\end{remark}
Theorem \ref{thm_1151} will be deduced from the next proposition:

\begin{proposition} Let $T \subseteq \RR^n$ be a convex body which is a local minimizer of the Mahler volume
$s(T)$	in the class of the projective images of $T$.  Then,
	\begin{equation}  (n+2)^2 \cdot \cov(T^{\circ}) \geq \cov(T)^{-1}. \label{eq_208}
	\end{equation}
Moreover, if $T$ is a local maximizer of the Mahler volume in the class of projective images of $T$ with barycenter at the origin, then $(n+2)^2 \cdot \cov(T^{\circ}) \leq \cov(T)^{-1}$.
	\label{prop_1021}
\end{proposition}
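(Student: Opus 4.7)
The plan is to express the Mahler volumes across the projective class of $T$ via the function $F(x, y) := \Phi_V(y) + \Phi_{V^*}(x) + (n+1)\log(-\langle x, y\rangle)$ on $\i(V) \times \i(V^*)$, where $V \subset \RR^{n+1}$ is the cone over $T$ from (\ref{eq_125}); by Proposition \ref{cor_1137}, $F(x, y) = \log s_{rx}(K_y)$ up to an additive constant, and as $(x, y)$ varies, $(K_y, rx)$ ranges over all projective images of $T$ equipped with a choice of base point. Setting $e = (1, 0) \in \RR \times \RR^n$, the body $T$ is identified with $K_{-e} \subset \{1\} \times \RR^n$ and its origin with $x_0 = e$, so local extrema of $s$ on the projective class correspond to local extrema of $F$ at $(e, -e)$. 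By the homogeneity (\ref{eq_408}), $F$ is invariant under $(x, y) \mapsto (sx, ty)$ for $s, t > 0$, so the relevant transverse subspace is cut out by $\delta x, \delta y \in \{0\} \times \RR^n$, of total dimension $2n$.

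First, Lemma \ref{lem_1122} (applied to $\Phi_V$ and $\Phi_{V^*}$) together with the differentiation of the log term gives $\partial_x F(e, -e) = (n+1)(0, \b(T^{\circ}))$ and $\partial_y F(e, -e) = (n+1)(0, \b(T))$; vanishing on the transverse subspace forces both $\b(T) = 0$ and $\b(T^{\circ}) = 0$. With both barycenter conditions in hand, a second application of Lemma \ref{lem_1122} shows that the rank-one ``$e e^*$'' contributions from the barycenter terms cancel exactly against the second derivatives of $(n+1)\log(-\langle x, y\rangle)$, yielding $\partial_x^2 F = (n+1)(n+2)\cov(T_e)$, $\partial_y^2 F = (n+1)(n+2)\cov(K_{-e})$, and $\partial_x \partial_y F = -(n+1)(\Id - e e^*)$. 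Restricted to transverse vectors $\delta x = (0, u)$ and $\delta y = (0, v)$ with $u, v \in \RR^n$, these collapse to the quadratic form
\begin{equation*}
Q(u, v) = (n+1)\bigl[(n+2)\langle \cov(T^{\circ}) u, u\rangle + (n+2)\langle \cov(T) v, v\rangle - 2\langle u, v\rangle\bigr].
\end{equation*}

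For a local minimizer $T$ on the full projective class, $Q \geq 0$ on all of $\RR^n \times \RR^n$. A Schur-complement computation (equivalently, minimizing in $v$ at $v = \cov(T)^{-1} u/(n+2)$) reduces this to $(n+2)^2 \cov(T^{\circ}) \geq \cov(T)^{-1}$. For a local maximizer restricted to the subclass of projective images with barycenter at the origin, the base point of $K_y$ is pinned to $\b(K_y) = \nabla \Phi_V(y)/(n+1)$; Taylor-expanding this constraint at $y = -e$ through $\nabla^2 \Phi_V(-e)$ (once more by Lemma \ref{lem_1122}) identifies the tangent direction as $u = (n+2)\cov(T) v$. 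Substituting into $Q$ and requiring $Q \leq 0$ along this one-parameter family of directions yields the reverse inequality $(n+2)^2 \cov(T^{\circ}) \leq \cov(T)^{-1}$.

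The main obstacle is the careful bookkeeping between $\RR^{n+1}$ and $\RR^n$: tracing the embedding $T \hookrightarrow \{1\} \times \RR^n$ through Lemma \ref{lem_1122}, verifying the rank-one cancellation in the Hessian that produces the clean formulas in $\cov(T)$ and $\cov(T^{\circ})$, and matching the barycenter constraint in the maximizer case with the tangent direction $u = (n+2)\cov(T) v$ coming from $\nabla^2 \Phi_V(-e)$. Once these identifications are in place, the matrix inequalities are a routine consequence of the sign of the quadratic form $Q$ on the appropriate subspace.
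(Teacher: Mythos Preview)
Your proposal is correct but follows a different route than the paper. The paper eliminates one of the two variables: rather than differentiating $F(x,y)=\Phi_V(y)+\Phi_{V^*}(x)+(n+1)\log(-\langle x,y\rangle)$ in both $x$ and $y$, it passes to the Legendre transform and works with the single-variable function $J(x)=\Phi_{V^*}(x)-\Phi_V^*(x)=\kappa_n+\log\bar s(T_x)$ from Corollary~\ref{lem_842}, whose Hessian is computed once and for all in Lemma~\ref{lem_256}. In fact the paper explicitly mentions your two-variable route just before introducing $\Phi_V^*$ and says it opted for the Legendre transform ``for convenience''. The two approaches are formally equivalent: your Schur complement (minimizing $Q(u,v)$ over $v$) is exactly the relation $\nabla^2\Phi_V^*=(\nabla^2\Phi_V)^{-1}$ that the paper invokes in Lemma~\ref{lem_256}, and the paper also takes the cone over $K=T^\circ$ rather than over $T$, which only swaps the roles of $x$ and $y$.

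What your approach buys is a cleaner treatment of the ``Moreover'' (local maximizer) part. The proposition constrains the barycenter to the origin, so the relevant functional is $s_{\b(K_y)}(K_y)$; you handle this directly by parameterizing the constraint curve $x=\b(K_y)=\nabla\Phi_V(y)/(n+1)$, reading off its tangent $u=(n+2)\cov(T)\,v$ from $\nabla^2\Phi_V(-e)$, and restricting $Q$. The paper's proof instead phrases the maximizer case via $\bar s(T_x)$ (the Santal\'o-point Mahler volume), which is a slightly different functional; your argument matches the stated hypothesis more literally. One point you should make explicit for completeness: in the maximizer case, $\b(T)=0$ is assumed and $\b(T^\circ)=0$ then follows from the first-order condition along the constrained tangent (since $\partial_yF(e,-e)=0$ already and $\partial_xF(e,-e)=(n+1)(0,\b(T^\circ))$ must annihilate every $u=(n+2)\cov(T)\,v$). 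Once both barycenters vanish, $\nabla F(e,-e)=0$ on the full transverse space, so the Hessian of the restricted functional is indeed just the restriction of $Q$, with no curvature-of-constraint correction.
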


\begin{proof} A translation of $T$ is a particular case of a projective image of $T$.
Assume that $T$ is a local minimizer. From (\ref{eq_839}) we learn that  the Santal\'o point of $T$
lies at the origin.
Denote $K = T^{\circ}$, so $\b(K) = 0$. Consider the proper, convex cone $V \subset \RR \times \RR^n$ defined in (\ref{eq_125}). For $e = (1,0) \in \RR \times \RR^n$ we
have
$$ T_e = \{ -1 \} \times T. $$
Therefore $T_x$ is a projective image of $T$ for any $x \in \i(V)$. When $x$ is close to $e$, the Santal\'o point of $T_x$ is close to the Santal\'o point of $T_e$. By our local minimum assumption, for any $x$ in some neighborhood of $e$, we have
$$ \bar{s}(T_x) \geq \bar{s}(T_e) = \bar{s}(T) =  s(T). $$
Recall from Corollary \ref{lem_842} that $ J(x) = \Phi_{V^*}(x) - \Phi_V^*(x) = \kappa_n + \log \bar{s}(T_x) $. We thus conclude
that $e$ is a local minimum of the function $J: \i(V) \rightarrow \RR$. Thus $\nabla J(e) = 0$ and $\nabla^2 J(e) \geq 0$.
From Lemma \ref{lem_256} we obtain
$$
(n+2)  \cdot \cov(T) - \frac{1}{n+2} \cdot \cov(T^{\circ})^{-1} = (n+2)  \cdot \cov(K^{\circ}) - \frac{1}{n+2} \cdot \cov(K)^{-1} \geq 0. $$
This proves (\ref{eq_208}). Similarly, if $T$ is a local maximizer of $\bar{s}(T_x)$, then $\nabla J(e) = 0$
and $\nabla^2 J(e) \leq 0$, and from Lemma \ref{lem_256} we obtain
$ (n+2)^2 \cdot \cov(T^{\circ}) \leq \cov(T)^{-1}$. \end{proof}

\begin{proof}[Proof of Theorem \ref{thm_206}]
Assume that $K \subseteq \RR^n$ is a local minimizer
of the Mahler volume in the class of convex bodies containing the origin in their interior. In particular, $K$
is a local minimizer in the class of projective images of $K$, and by Proposition \ref{prop_1021},
\begin{equation} \cov(K^{\circ}) - (n+2)^{-2} \cdot \cov(K)^{-1} \geq 0. \label{eq_305}
\end{equation}
In order to prove (\ref{eq_947}), we use the fact that $\det(A) \geq \det(B)$ whenever $A \geq B \geq 0$. Thus,
by (\ref{eq_212}) and (\ref{eq_305}),
$$ L_K^{2n} \cdot L_{K^{\circ}}^{2n} \cdot s(K)^2 = \det \cov(K) \cdot \det \cov(K^{\circ}) \geq \frac{1}{(n+2)^{2n}}, $$
proving (\ref{eq_947}). Note that if $K$ is a global minimizer, then $s(K) \leq s(\Delta^n)$ and from (\ref{eq_947}),
$$ L_K \cdot L_{K^{\circ}} \geq L_{\Delta^n}^2 $$
where we used the fact that $L_{\Delta^n}^2 \cdot s(\Delta^n)^{1/n} = 1/(n+2)$. Hence $L_K \geq L_{\Delta^n}$ or $L_{K^{\circ}} \geq L_{\Delta^n}$
in the case of a global maximizer.
\end{proof}

\begin{remark}{\rm Suppose that $K \subseteq \RR^n$ is a convex body which is not an ellipsoid, whose boundary is smooth
with Gauss curvature that is always positive (i.e., the boundary is strongly-convex).
Assume that the barycenter of $K$ lies at the origin, set $T = K^{\circ}$, and let  $V$ be defined
as in (\ref{eq_125}). Then by Corollary \ref{lem_842} with $x = e = (1,0) \in \RR \times \RR^n$,
$$ J(e) = \kappa_n + \log \bar{s}(T) < \kappa_n + \log s(B^n), $$
where in the last passage we used
the equality case in the
Santal\'o inequality (see Meyer and Pajor \cite{MP}).
 We claim that for $J = \Phi_{V^*} - \Phi_V^*$,
\begin{equation} \lim_{x \rightarrow \infty} J(x) = \kappa_n + \log s(B^n) > J(e).
\label{eq_259} \end{equation}
Indeed, the behavior of the functional $J: \i(V) \rightarrow \RR$
at infinity is simple to understand, as the corresponding hyperplane sections $T_x$ are close to ellipsoids,
and hence $\bar{s}(T_x)$ is  close to $s(B^n)$.
From (\ref{eq_259}) we see that the infimum of $J$ is necessarily attained at some point $x \in \i(V)$.
From Lemma \ref{lem_256}
we thus obtain that whenever $T \subseteq \RR^n$
is a convex body with a smooth and strongly-convex boundary, it has a projective image $\tilde{T}$
whose barycenter and Santal\'o point are at the origin and
$$ (n+2)^2 \cdot \cov(\tilde{T}^{\circ}) \geq \cov(\tilde{T})^{-1}. $$
} \label{rem_1159} \end{remark}

\section{Examples}

Let us begin this section by inspecting the simplest example, the case of the orthant
$$ V = \RR^{n+1}_+ = \left \{ x = (x_0,\ldots,x_n) \in \RR^{n+1} \, ; \, \forall i, \ x_i \geq 0 \right \}. $$
The proper, convex cone $V$ is homogeneous, with an automorphism group consisting of all diagonal transformations
with positive entries on the diagonal. Moreover, in this case $V^* = - V$, and a homogeneous cone with this property is called
a {\it symmetric cone}.
For any $y \in \i(V^*)$, the set $K_y$ is an $n$-dimensional simplex. The logarithmic Laplace transform
is given by
$$ \Phi_V(y) = - \sum_{i=0}^n \log |y_i| \qquad \qquad \text{for} \ y \in \i(V^*). $$
Since $\sup_{s > 0} [-st + \log(s)] = -1 - \log(t)$ for any positive $t$, we have
$$ \Phi_V^*(x) = -(n+1) - \sum_{i=0}^n \log |x_i| \qquad \qquad \text{for} \ x \in \i(V). $$
It follows that  $\Phi_{V^*}(x) - \Phi_V^*(x) = n+1$ for any $x \in \i(V)$.
From Corollary \ref{lem_842} and this example we conclude the following:

\begin{corollary} The Mahler conjecture (\ref{eq_1001}) is equivalent to the
assertion that for any proper, convex
cone $V \subset \RR^{n+1}$ and for any point $x \in \i(V)$ we have $\Phi_{V^*}(x) - \Phi_V^*(x) \geq n+1$.
\label{cor_502}
\end{corollary}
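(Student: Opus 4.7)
The plan is to use Corollary \ref{lem_842} directly: it identifies the quantity $J(x) = \Phi_{V^*}(x) - \Phi_V^*(x)$ with $\kappa_n + \log \bar{s}(T_x)$, so the cone inequality $\Phi_{V^*}(x) - \Phi_V^*(x) \geq n+1$ is equivalent to $\bar{s}(T_x) \geq \exp(n+1 - \kappa_n)$ for every proper cone $V \subset \RR^{n+1}$ and every $x \in \i(V)$. A routine manipulation of $\kappa_n = 2\log(n!) - (n+1)\log((n+1)/e)$ shows that $\exp(n+1 - \kappa_n) = (n+1)^{n+1}/(n!)^2 = s(\Delta^n)$, so the cone inequality is equivalent to the statement that $\bar{s}(T_x) \geq s(\Delta^n)$ for all such $V$ and $x$.

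For the forward direction, assume the Mahler conjecture (\ref{eq_1001}). Given any proper cone $V$ and $x \in \i(V)$, the set $T_x$ is an $n$-dimensional compact convex set, and $\bar{s}(T_x) = s_{p}(T_x)$ where $p$ is its Santal\'o point; after an affine identification with $\RR^n$ that sends $p$ to the origin, Mahler's conjecture applies and gives $\bar{s}(T_x) \geq s(\Delta^n)$, hence $J(x) \geq n+1$.

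For the reverse direction, I would realize an arbitrary convex body $K \subseteq \RR^n$ with $0 \in \i(K)$ as a hyperplane section of a cone, much as in (\ref{eq_125}). Setting $V = \{ (t, tx) \in \RR \times \RR^n \, ; \, t \geq 0,\ x \in K^{\circ} \}$, one verifies immediately that $V^* = \{ (-t, tx) \, ; \, t \geq 0,\ x \in K \}$ and that $T_e = \{-1\} \times K$ for $e = (1,0)$, so $T_e$ is affinely equivalent to $K$ and in particular $\bar{s}(T_e) = \bar{s}(K)$. Applying the assumed cone inequality at $x = e$ gives $\bar{s}(K) \geq s(\Delta^n)$, and since the origin lies in the interior of $K$ we have $s(K) = s_0(K) \geq \inf_p s_p(K) = \bar{s}(K) \geq s(\Delta^n)$, which is (\ref{eq_1001}).

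There is no real obstacle here; the argument is a repackaging of Corollary \ref{lem_842}. The only mild points are the constant computation $\exp(n+1 - \kappa_n) = s(\Delta^n)$ and the observation that every convex body arises as some $T_x$, both of which are essentially bookkeeping.
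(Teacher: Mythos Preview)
Your proof is correct and follows essentially the same route as the paper: both arguments rest on Corollary~\ref{lem_842}, the identity $\kappa_n + \log s(\Delta^n) = n+1$, and the fact that every convex body arises as some $T_x$. The only cosmetic difference is in how the constant is pinned down: the paper computes $\Phi_{V^*} - \Phi_V^* \equiv n+1$ directly for the orthant $V = \RR^{n+1}_+$ (whose sections $T_x$ are simplices, so Corollary~\ref{lem_842} then reads $n+1 = \kappa_n + \log s(\Delta^n)$ without any arithmetic), whereas you verify $\exp(n+1 - \kappa_n) = (n+1)^{n+1}/(n!)^2$ by hand. Both are fine; the orthant shortcut is slightly slicker but yours is equally valid.
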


The second  example we consider is where $$ V = \left \{ (x_0,\ldots, x_n) \in \RR^{n+1} \, ; \, x_0 \geq \sqrt{ \sum_{i=1}^n x_i^2} \right \} $$
is the Lorentz cone. Here again $V^* = -V$. For any $y \in \i(V^*)$, the set $K_y$ is an ellipsoid.
Denoting $Q(x) = x_0^2 - \sum_{i=1}^n x_i^2$ for $x = (x_0,\ldots,x_n) \in \RR^{n+1}$, we have
$$  \Phi_V( y) =  - \frac{n+1}{2} \log Q(y) + C_n,   \qquad \qquad \text{for} \ y \in \i(V^*), $$
as  dictated by the symmetries of the problem, where $C_n = \log ( \pi^{n/2} \cdot \Gamma(n+1) / \Gamma(1 + n/2) )$. Moreover, here
\begin{equation} \Phi_{V^*} - \Phi_V^* \equiv 2 C_n - (n+1) \log \frac{n+1}{e}, \label{eq_338}
\end{equation}
which by Santal\'o's inequality is the maximal possible value of $\Phi_{V^*} - \Phi_V^*$ for any proper, convex cone $V \subset \RR^{n+1}$.
The right-hand side of (\ref{eq_338}) is asymptotically $(\log(2\pi) + o(1)) \cdot n$, according to Stirling's approximation.
 The third example we consider is the cone of positive semi-definite matrices
 \begin{equation} V = \left \{ A \in \RR^{n \times n} \, ; \, A^* = A, A \geq 0 \right \}. \label{eq_355} \end{equation}
This is a proper, convex cone in the linear space $X_n \subseteq \RR^{n \times n}$ of all real, symmetric matrices. We endow $X_n$ with the scalar product
$$ \langle A, B \rangle = \tr[A B] = \sum_{i=1}^n A_{ii} B_{ii} + 2 \sum_{i < j} A_{ij} B_{ij} $$
where $A = (A_{ij})_{i,j=1,\ldots,n}$ and $B = (B_{ij})_{i,j=1,\ldots,n}$. With this scalar product, we have $V^* = -V$.
The volume form in $X_n$
which is induced by this scalar product is the volume form $d A := 2^{n(n-1)/4} \prod_{i \leq j} d A_{ij}$, up to a sign which corresponds to
a choice of orientation in $X_n$.
The logarithmic Laplace transform is given by
\begin{equation}  \Phi_V(-A) = \log \int_V e^{-\tr[A B]} d B =   -\frac{n+1}{2} \cdot \log \det A  + C_n \label{eq_354} \end{equation}
for some constant $C_n$. Indeed, for any map $T \in \RR^{n \times n}$ with $\det(T) = 1$ we know
that $\Phi_V(T^* A T) = \Phi_V(A)$. This shows that $\Phi_V(A)$ depends only on the determinant of $A$.
The homogeneity property  $\Phi_V(t A) = -\log(t) \cdot n(n+1)/2 + \Phi_V(A)$ implies formula (\ref{eq_354}).
The computation of $C_n$ by induction on $n$ is well-known and it is explained, e.g., in \cite[Section 5.7]{Neretin}.
For completeness, and since our notation is a bit different, the following lemma includes the standard computation:

\begin{lemma} In the case where $V = V_n$ is given by (\ref{eq_355}), the constant $C_n$ from (\ref{eq_354}) satisfies
\begin{equation} C_n = \log \int_{V_n} e^{-\tr[A]} dA = \frac{n(n-1)}{4} \log (2\pi) + \sum_{k=1}^{n}  \log \Gamma \left( \frac{k+1}{2} \right).
\label{eq_456}
\end{equation}
\end{lemma}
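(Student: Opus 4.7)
The first equality is immediate from (\ref{eq_354}) at $A = I_n$: since $\log \det I_n = 0$ and $\tr[I_n B] = \tr[B]$, we get $C_n = \Phi_V(-I_n) = \log \int_{V_n} e^{-\tr[B]} \, dB$. For the explicit evaluation I would proceed by induction on $n$ using the auxiliary one-parameter family
$$ I_n(s) := \int_{V_n} e^{-\tr[A]} (\det A)^s \, dA \qquad (s \geq 0), $$
a variant of the multivariate (Siegel) Gamma function, so that $I_n(0) = e^{C_n}$. The heart of the argument is the recursion
$$ I_n(s) = (2\pi)^{(n-1)/2} \, \Gamma(s+1) \, I_{n-1}(s + 1/2). $$

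To derive the recursion, I would block-decompose $A = \begin{pmatrix} B & v \\ v^* & a \end{pmatrix}$ with $B \in V_{n-1}$, $v \in \RR^{n-1}$, $a \in \RR$. By the Schur complement, $A \in V_n$ iff $B \in V_{n-1}$ and $a \geq v^* B^{-1} v$ (for $B > 0$, which holds a.e.). Use $\tr[A] = \tr[B] + a$ and $\det A = \det B \cdot (a - v^* B^{-1} v)$, and then substitute $b := a - v^* B^{-1} v$ and $w := B^{-1/2} v$, whose Jacobian contributes a factor $\sqrt{\det B}$. The three resulting integrals decouple: the $b$-integral evaluates to $\Gamma(s+1)$, the Gaussian $w$-integral to $\pi^{(n-1)/2}$, and the remaining $B$-integral combines with the extra $\sqrt{\det B}$ to become $I_{n-1}(s + 1/2)$. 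A direct comparison of the paper's volume elements on $X_n$ and $X_{n-1}$ produces $dA = 2^{(n-1)/2} \, dB \, dv \, da$, where the exponent arises from $n(n-1)/4 - (n-1)(n-2)/4 = (n-1)/2$; multiplying this against $\pi^{(n-1)/2}$ yields the claimed $(2\pi)^{(n-1)/2}$.

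Iterating the recursion $(n-1)$ times reduces $I_n(0)$ to $I_1((n-1)/2) = \Gamma((n+1)/2)$. The exponents of $2\pi$ sum to $\sum_{m=1}^{n-1} m/2 = n(n-1)/4$, while the collected Gamma arguments run through $\Gamma(1), \Gamma(3/2), \Gamma(2), \ldots, \Gamma(n/2), \Gamma((n+1)/2)$, which is precisely $\prod_{k=1}^{n} \Gamma((k+1)/2)$. Taking logarithms then yields (\ref{eq_456}). The main obstacle is bookkeeping rather than conceptual: one must keep the factor $2^{n(n-1)/4}$ from the paper's chosen measure convention consistent across each recursive step, and verify that the $\sqrt{\det B}$ contributed by the change of variables is exactly what is needed to increment the parameter $s$ by $1/2$ at each stage, so that the recursion closes and produces the correct product of Gamma factors.
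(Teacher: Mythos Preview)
Your proof is correct. Both your argument and the paper's proceed by induction via the same block decomposition $A = \left(\begin{smallmatrix} B & v \\ v^* & a \end{smallmatrix}\right)$, but the change of variables differs. The paper takes the Schur complement with respect to the scalar corner: it sets $v' = v/a$ and $D = B - a v' (v')^*$, so that $A = L \,\mathrm{diag}(D,a)\, L^*$ with $L$ unipotent. The Jacobian is then $a^{n-1}$, the integral over $(D,v',a)$ decouples, and one obtains directly the recursion $C_n = C_{n-1} + \tfrac{n-1}{2}\log(2\pi) + \log\Gamma(\tfrac{n+1}{2})$, with no auxiliary parameter needed. Your route instead takes the Schur complement with respect to the matrix block $B$, substituting $w = B^{-1/2}v$; this produces a Jacobian $(\det B)^{1/2}$, which forces you to carry the extra determinant power and work with the full Siegel integral $I_n(s)$, shifting $s \mapsto s+\tfrac12$ at each step.

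The paper's approach is slightly leaner for the specific constant $C_n = I_n(0)$, since its recursion closes at $s=0$. Your approach is the classical evaluation of the multivariate Gamma function and yields the more general formula for all $I_n(s)$ as a by-product; the bookkeeping you flag (tracking the $2^{n(n-1)/4}$ normalization and the $\tfrac12$-increment) is handled correctly.
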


\begin{proof} For $A \in X_n$ let us write
$$  A = \left( \begin{array}{c|c} B & u \\ \hline u^* & s \end{array} \right) $$
where $B \in X_{n-1}$ is a symmetric matrix, $u \in \RR^{n-1}$ and $s \in \RR$.
Then $d A = \pm 2^{(n-1)/2} d B \wedge du \wedge ds$, where we recall that $d B := 2^{(n-1)(n-2)/4} \prod_{i \leq j} d B_{ij}$
while $d u = \prod_{i} d u_i$. By setting $v = u/s, $ and $D = B - s v v^*$ we obtain
$$ A = \left( \begin{array}{c|c} B & u \\ \hline u^* & s \end{array} \right)
= \left( \begin{array}{c|c} D + s v v^* & s v \\ \hline s v^* & s \end{array} \right)
=
\left( \begin{array}{c|c} 1 & v \\ \hline 0 & 1 \end{array} \right) \cdot \left( \begin{array}{c|c} D & 0 \\ \hline 0 & s \end{array} \right) \cdot
\left( \begin{array}{c|c} 1 & 0 \\ \hline v^* & 1 \end{array} \right). $$
Note that
the map from $A \in \i(V_n)$ to $(D, v, s) \in \i(V_{n-1}) \times \RR^{n-1} \times (0, \infty)$ is a diffeomorphism. Moreover,  $d u = s^{n-1} dv + \alpha \wedge ds$ for some $\alpha$ and hence $d B \wedge du \wedge ds = s^{n-1} d D \wedge d v \wedge ds$.
 Consequently,
\begin{align*}
e^{C_n} & = \int_{V_n} e^{-\tr[A]} dA = 2^{\frac{n-1}{2}} \int_{V_{n-1} \times \RR^{n-1} \times (0, \infty)} e^{-\tr[D] - s |v|^2 - s} s^{n-1} d D \wedge d v \wedge ds
\\ & = e^{C_{n-1}} \cdot 2^{\frac{n-1}{2}} \int_0^{\infty} s^{n-1} e^{-s} \left( \int_{\RR^{n-1}} e^{-s |v|^2} dv    \right) ds = e^{C_{n-1}} 2^{\frac{n-1}{2}} \int_0^{\infty} s^{n-1} e^{-s} \left( \frac{\pi}{s} \right)^{(n-1)/2} ds.
\end{align*}
It follows that
$$ C_n = C_{n-1} + \frac{n-1}{2} \log (2 \pi) + \log \Gamma \left( \frac{n+1}{2} \right). $$
Since $C_1 = 0$, formula (\ref{eq_456}) follows by a simple induction.
\end{proof}
It follows that  in the case where $V$ is given by (\ref{eq_355}),
$$ \Phi_{V^*} - \Phi_V^* \equiv 2 C_n - \frac{n(n+1)}{2} \log \frac{n+1}{2e} = \frac{n (n+1)}{2} \cdot \left[ \log(2\pi) - 1/2 + o(1) \right] $$
where the asymptotics as $n \rightarrow \infty$ follows from Stirling's formula. Since $\log(2 \pi) - 1/2 > 1$,
in view of Corollary \ref{cor_502} we have verified
the Mahler conjecture for hyperplane sections
of the cone of positive-definite, symmetric
matrices in a sufficiently high dimension.
Moreover, thanks to Theorem \ref{prop_356}, we see that in high dimensions, the isotropic constant of such hyperplane sections
is smaller than that of the simplex in the corresponding dimension.
We proceed with the case of complex-valued matrices
and the cone
 \begin{equation} V = \left \{ A \in \CC^{n \times n} \, ; \, A^* = A, A \geq 0 \right \}. \label{eq_508} \end{equation}
Now we write $X_n$ for the space of Hermitian $n \times n$ matrices, equipped with the scalar product
 $ \langle A, B \rangle = \tr[A^* B]$. This space has real dimension $n^2$, and we have $V^* = -V$.
 The induced volume form is $d A := 2^{n(n-1)/2} \prod_{i \leq j} d A_{ij}$,
 where $A_{ii} \in \RR$ and $A_{ij} \in \CC$ for $i \neq j$.
 The logarithmic Laplace transform is given by
$$  \Phi_V(-A) = \log \int_V e^{-\tr[A B]} d B =   -n \cdot \log \det A  + C_n $$
where
$$ C_n = \frac{n(n-1)}{2} \log (2 \pi) + \sum_{k=1}^{n-1} \log(k!). $$
Here,
$$ \Phi_{V^*} - \Phi_V^* \equiv 2 C_n - n^2 \log \left( \frac{n}{e} \right) = n^2 \cdot \left[  \log(2\pi) - 1/2 + o(1) \right]. $$
Once again we see  the numerical constant $\log(2 \pi) - 1/2$ which appeared in the case of real, symmetric matrices.
The same numerical constant also appears  in
the  quaternionic case.

\smallskip A natural operation on convex cones is that of Cartesian products. If $V_1 \subseteq \RR^{n_1 + 1}$ and $V_2 \subseteq \RR^{n_2 + 1}$
are proper, convex cones, then so is the Cartesian product $V_1 \times V_2 \subseteq \RR^{n_1 + 1} \times \RR^{n_2 + 1}$ whose dual is $V_1^* \times V_2^*$.
Moreover,
$$ \Phi_{V_1 \times V_2}(x,y) = \Phi_{V_1}(x) + \Phi_{V_2}(y) \qquad \qquad (x \in \i(V_1^*), y \in \i(V_2^*)) $$
and similarly $\Phi_{V_1 \times V_2}^*(x,y) = \Phi^*_{V_1}(x) + \Phi^*_{V_2}(y)$. Write $J_V(x) = \Phi_{V^*}(x) - \Phi_V^*(x) $
and let
$$ J_{n+1} := \inf_{V \subset \RR^{n+1}} \inf_{x \in \i(V)} J_V(x) $$
where the first infimum runs over all proper, convex cones
$V \subset \RR^{n+1}$. Since $J_{V_1 \times V_2}(x,y) = J_{V_1}(x) + J_{V_2}(y)$,
we see that $J_{n+m} \leq J_n + J_m$. The subadditivity property of $J_n$ implies that
$$ \lim_{n \rightarrow \infty} \frac{J_n}{n} = \inf_{n \rightarrow \infty} \frac{J_n}{n} $$
thanks to the Fekete lemma. Thus, in view of Corollary \ref{cor_502}, the Mahler conjecture
would follow from an asymptotic estimate of the form $J_n \geq (1 + o(1)) \cdot n$. We move on to discuss the case where  for $i=1,2$, the cone $V_i \subseteq \RR^{n_i}$ takes the form
\begin{equation}
 V_i = \left \{ (t, tx) \, ; \, t \geq 0, x \in K_i \right \} \subseteq \RR \times \RR^{n_i} \label{eq_1452}
 \end{equation}
for some convex body $K_i \subseteq \RR^{n_i}$. Consider the hyperplane section of the cone
$V_1 \times V_2$ that consists of all $4$-tuples $(t,x, s,y)$ with $t + s = 1$. This hyperplane section is
$$ \{ (t, tx, 1-t, (1-t) y) \, ; \, x \in K_1, y \in K_2, 0 \leq t \leq 1 \}, $$
which is affinely equivalent to the {\it geometric join} of  $K_1$ and $K_2$, defined via
\begin{equation} K_1 \diamondsuit K_2 := \sqrt{2} \cdot \{ (t-1/2, tx, 1/2-t, (1-t) y) \, ; \, x \in K_1, y \in K_2, 0 \leq t \leq 1 \}. \label{eq_1137}
\end{equation}
The geometric join of $K_1 \subseteq \RR^{n_1}$ and $K_2 \subseteq \RR^{n_1}$ is an $(n_1 + n_2+1)$-dimensional compact, convex set
with a non-empty interior relative to the ambient linear subspace
$$ H_{n_1, n_2} = \{ (t, x, -t, y) \, ; \, t \in \RR, x \in \RR^{n_1}, y \in \RR^{n_2} \} \subseteq \RR^{n_1 + n_2 + 2}.
$$ Our definition of a geometric join is slightly
different from the perhaps more standard notation
 in \cite{Ku2}, yet the two definitions are affinely equivalent.
 The geometric join of $\Delta^{n_1}$ and $\Delta^{n_2}$ is an $(n_1 + n_2 + 1)$-dimensional simplex.
 Geometric joins fit well with duality:

\begin{proposition} Let $K_1 \subseteq \RR^{n_1}$ and $K_2 \subseteq  \RR^{n_2}$ be convex bodies containing the origin in their interior.
Then,
\begin{equation}
 (K_1 \diamondsuit K_2)^{\circ} = \pi( K_1^{\circ} \diamondsuit K_2^{\circ} ),
\label{eq_941}
\end{equation}
where we view $K_1 \diamondsuit K_2$ and $K_1^{\circ} \diamondsuit K_2^{\circ}$ as convex bodies
in the subspace $H_{n_1, n_2} \subseteq \RR^{n_1 + n_2 + 2}$ which is equipped with the induced scalar product from $\RR^{n_1 + n_2 + 2}$, and where
$$ \pi(t, x, -t, y) = (-t, x, t, y) \qquad \text{for} \
(t,x,-t,y) \in H_{n_1, n_2}. $$ \label{prop_203}
\end{proposition}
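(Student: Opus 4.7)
The plan is to interpret both sides of (\ref{eq_941}) via the duality described in Remark \ref{rem_1208}, applied to the product cone $V := V_1 \times V_2 \subseteq \RR^{n_1 + n_2 + 2}$, where $V_i$ is defined by (\ref{eq_1452}). Recall from the discussion following (\ref{eq_125}) and used in the proof of Lemma \ref{lem_256} that $V_i^{*} = \{(-t, tu) : t \geq 0,\ u \in K_i^{\circ}\}$; moreover $V$ is a proper convex cone with $V^{*} = V_1^{*} \times V_2^{*}$.

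I would then fix the reference points
$$ x_0 = (1/2, 0, 1/2, 0) \in \i(V), \qquad y_0 = (-1, 0, -1, 0) \in \i(V^{*}). $$
A direct verification yields $\langle x_0, y_0 \rangle = -1$ and, crucially, $x_0^{\perp} = y_0^{\perp} = H_{n_1, n_2}$. Thus in Remark \ref{rem_1208} the two ambient subspaces $X$ and $Y$ both coincide with $H_{n_1, n_2}$, and the duality (\ref{eq_823}) specializes to ordinary polar duality inside $H_{n_1, n_2}$. Writing $\tilde{K} := K_{y_0} - x_0$ and $\tilde{T} := T_{x_0} - y_0$ in the notation of (\ref{eq_135}), we therefore have $\tilde{T} = \tilde{K}^{\circ}$ as convex bodies in $H_{n_1, n_2}$.

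Next I would match the two sides of (\ref{eq_941}) with $\tilde{K}$ and $\tilde{T}$ up to a factor of $\sqrt{2}$. The slice $K_{y_0} = \{z \in V : \langle z, y_0 \rangle = -1\}$ is precisely $\{(t, tx, 1-t, (1-t)y) : x \in K_1,\ y \in K_2,\ 0 \leq t \leq 1\}$, so comparison with (\ref{eq_1137}) gives $K_1 \diamondsuit K_2 = \sqrt{2}\,\tilde{K}$. Similarly, $T_{x_0}$ is the slice $\{t + s = 2\}$ of $V^{*}$; parametrizing its points as $(-2r, 2ru, -2(1-r), 2(1-r)v)$ with $u \in K_1^{\circ},\ v \in K_2^{\circ},\ 0 \leq r \leq 1$, and translating by $-y_0 = (1, 0, 1, 0)$, one finds $\tilde{T} = \{(1 - 2r, 2ru, 2r - 1, 2(1-r)v)\}$, which direct inspection identifies with $\sqrt{2}\cdot \pi(K_1^{\circ} \diamondsuit K_2^{\circ})$. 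Combining these identifications,
$$ (K_1 \diamondsuit K_2)^{\circ} = (\sqrt{2}\,\tilde{K})^{\circ} = \tfrac{1}{\sqrt{2}}\,\tilde{K}^{\circ} = \tfrac{1}{\sqrt{2}}\,\tilde{T} = \pi(K_1^{\circ} \diamondsuit K_2^{\circ}), $$
which is (\ref{eq_941}).

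The only moderately delicate step is the bookkeeping in the previous paragraph: one must track the $\sqrt{2}$ factor built into the definition (\ref{eq_1137}) and observe that the coordinate swap defining $\pi$ arises precisely from translating $T_{x_0}$ by $-y_0$, where the first and third components switch sign relative to their values in $V^{*}$. Once the parametrizations are chosen consistently this reduces to routine inspection rather than a genuine obstacle.
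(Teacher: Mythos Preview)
Your proof is correct. The identifications $K_1 \diamondsuit K_2 = \sqrt{2}\,\tilde{K}$ and $\tilde{T} = \sqrt{2}\,\pi(K_1^{\circ} \diamondsuit K_2^{\circ})$ check out line by line, and the choice $x_0 = (1/2,0,1/2,0),\ y_0 = (-1,0,-1,0)$ is exactly what makes $x_0^{\perp} = y_0^{\perp} = H_{n_1,n_2}$ so that the abstract duality of Remark~\ref{rem_1208} collapses to ordinary polarity in $H_{n_1,n_2}$.

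As for the comparison: the paper opens its proof with a one-line remark that the result ``follows from the fact that $(V_1 \times V_2)^* = V_1^* \times V_2^*$'', which is precisely the route you have fleshed out via Remark~\ref{rem_1208}. The paper then offers an alternative, fully self-contained argument by direct computation: it parametrizes a generic point of $H_{n_1,n_2}$, writes out the polar-body inequality against all points of $K_1 \diamondsuit K_2$, reduces the supremum over $x' \in K_1,\ y' \in K_2$ to the support functionals $\|x\|_{K_1^{\circ}},\ \|y\|_{K_2^{\circ}}$, and then observes that the resulting constraint is affine in the join parameter $s$, so it suffices to test $s=0,1$. Your approach is more conceptual and leverages machinery already built in Section~\ref{sec2}; the paper's written-out argument is more elementary and requires no reference to the cone framework, at the cost of a small amount of explicit inequality-chasing. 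Both are short, and neither has a real advantage beyond taste.
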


\begin{proof} This follows from the fact that $(V_1 \times V_2)^* = V_1^* \times V_2^*$ where $V_i$ is given by
	(\ref{eq_1452}). Alternatively, we may argue directly as follows.
	A point $\sqrt{2} \cdot (1/2-t,x,t-1/2,y) \in H_{n_1,n_2}$ belongs to $(K_1 \diamondsuit K_2)^{\circ}$
if and only if for all $s \in [0,1], x' \in K_1$ and $y' \in K_2$,
$$ -2  (t-1/2) (s-1/2) + s \langle x, x' \rangle + (1-s) \langle y,  y' \rangle \leq \frac{1}{2}. $$
This happens if and only if for all $s \in [0,1]$,
\begin{equation}
 -2  (t -1/2) (s-1/2) + s \| x \|_{K_1^{\circ}} + (1-s) \| y \|_{K_2^{\circ}} \leq \frac{1}{2} \label{eq_1505}
 \end{equation}
where $\| x \|_{K_i^{\circ}} = \sup_{x' \in K_i} \langle x,  x' \rangle$. Since the left-hand side of (\ref{eq_1505}) is an affine function of $s$, it suffices to look at the two values $s=0,1$. Hence the
point $\sqrt{2} \cdot (1/2-t,x,t-1/2,y) $ belongs
to $(K_1 \diamondsuit K_2)^{\circ}$ if and only if
$$ (1/2 - t) + \| x \|_{K_1^{\circ}} \leq 1/2
\qquad \text{and} \qquad (t - 1/2) + \| y \|_{K_2^{\circ}} \leq 1/2, $$
i.e., if and only if $x \in t K_1^{\circ}$ and $y \in (1 - t) K_2^{\circ}$. This completes the proof.
\end{proof}

The geometric join may be viewed as a variant of the Cartesian product. For example, it may be verified using Corollary \ref{lem_842}
and the connection between geometric joins and Cartesian products of cones,  that
$$  \bar{s}(K_1 \diamondsuit K_2) =  C_{n_1,n_2}  \cdot \bar{s}(K_1) \cdot \bar{s}(K_2) $$
for any convex bodies $K_1 \subseteq \RR^{n_1}$ and $K_2 \subseteq \RR^{n_2}$, where
$$ C_{n_1,n_2} =  \left( \frac{n_1! \cdot n_2!}{(n_1 + n_2 + 1)!} \right)^2 \cdot \frac{(n_1 + n_2 + 2)^{n_1 + n_2 + 2}}{(n_1 + 1)^{n_1 + 1} (n_2 + 1)^{n_2 + 1}}. $$
In comparison, for Cartesian products we know that for any convex bodies $K_1 \subseteq \RR^{n_1}$ and $K_2 \subseteq \RR^{n_2}$,
$$ \bar{s}(K_1 \times K_2) = \tilde{C}_{n_1, n_2} \cdot \bar{s}(K_1) \cdot \bar{s}(K_2) $$
where $\tilde{C}_{n_1, n_2} = n_1! n_2! / (n_1 + n_2)!$. For more relations between
the geometric join and various inequalities, see Rogers and Shephard \cite{RS}.

\section{Covariance of a body and its polar}

In this section we prove Proposition \ref{prop_1157}. Let $K \subseteq \RR^n$ be a centrally-symmetric convex body.
Then $\b(K) = \b(K^{\circ}) = 0$.
From Corollary \ref{lem_842} and Lemma \ref{lem_256} we learn the following: The
 functional
$$ T \mapsto \bar{s}(T), $$ restricted to the class of projective images $T$ of the body $K$,
has a stationary point at $K$. If this stationary point were in fact a {\it local maximum} then by Proposition \ref{prop_1021} we would have
\begin{equation} \phi(K) = \Tr[ \cov(K^{\circ}) \cdot \cov(K) ] \leq n / (n+2)^2. \label{eq_452}
\end{equation}
Inequality (\ref{eq_452}) is equivalent to Conjecture 5.1 from \cite{Ku}.
This local maximum property indeed holds in the case where $K$ is the unit ball
of $\ell_p^n$ for $1 \leq p \leq \infty$, see Alonso-Guti\'errez \cite{Alonso}.
However, Proposition \ref{prop_1157} above implies that this local maximum property fails in general.
The remainder of this section is concerned with the proof of
Proposition \ref{prop_1157}.
Set
$$ K_0 = B_1^{n-1} = \left \{ x \in \RR^{n-1} \, ; \, \sum_{i=1}^{n-1} |x_i| \leq 1 \right \}. $$
Define $K_1 = K_0 \cap (\sqrt{3/n}) B_2^{n-1}$ where $B_2^{n-1} = \{ x \in \RR^{n-1} \, ; \sum_i |x_i|^2 \leq 1 \}$, and
\begin{equation} K = \left \{ (t,x) \in \RR \times \RR^{n-1} \, ; \, |t| \leq 1, \ x \in (1 - |t|) K_0 + |t| K_1 \right \}.
\label{eq_1228}
\end{equation}
We claim that
\begin{equation}
Vol_{n-1}(K_1)  \geq \frac{1}{3} \cdot Vol_{n-1}(K_0).
\label{eq_1221} \end{equation}
Indeed, a direct computation shows that $\int_{K_0} x_i^2 dx = 2 Vol_{n-1}(K_0) / [n (n+1)]$ for all $i$.
Therefore the average of the function $x \mapsto |x|^2$ on $K_0$ is at most $2/n$. Now (\ref{eq_1221}) follows by the Markov-Chebychev inequality. From (\ref{eq_1228}) and (\ref{eq_1221})
we conclude that
$$ \forall x_1 \in (-1,1), \qquad  \frac{1}{3} \leq \frac{Vol_{n-1} \left( \left \{ x \in \RR^{n-1} \, ; \, (x_1, x) \in K \right \} \right)}
{Vol_{n-1}(K_0)} \leq 1,
$$
where again we use the coordinates $(x_1, x) \in \RR \times \RR^{n-1} \cong \RR^n$. Consequently
$Vol_n(K) \leq 2 Vol_{n-1}(K_0)$ and
\begin{equation} \int_K x_1^2 \frac{dx}{Vol_n(K)}
= \int_{-1}^1 x_1^2 \cdot \frac{Vol_{n-1} \left( \left \{ x \in \RR^{n-1} \, ; \, (x_1, x) \in K \right \} \right)}
{Vol_{n}(K)} dx_1 \geq \int_{-1}^1 \frac{x_1^2}{6} dx_1 = \frac{1}{9}.
\label{eq_1232} \end{equation}
We now move on to discuss the integral of $x_1^2$ over $K^{\circ}$. We require the following:

\begin{lemma} Let $X_1,\ldots,X_{n-1}$ be independent random variables, distributed uniformly in the interval $[-1,1]$.
Then with a probability of at least $1/6$, there exists a decomposition of $X = (X_1,\ldots,X_{n-1})$ as
$$ X = (Y + Z) / 2 $$
with $Y \in (8/9) B_{\infty}^{n-1}$ and $Z \in \sqrt{3 n / 10} \cdot B_2^{n-1}$. Here, $B_{\infty}^{n-1} = [-1,1]^{n-1}$.
\label{lem_214}
\end{lemma}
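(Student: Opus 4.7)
The plan is to construct $(Y, Z)$ coordinate-wise from $X$ and then control $|Z|^2$ by a first-moment estimate. For each $i$, I would set
$$ Z_i \;=\; \sgn(X_i) \cdot \max\bigl(0,\, 2|X_i| - 8/9\bigr), \qquad Y_i \;=\; 2 X_i - Z_i, $$
so that $Y_i + Z_i = 2 X_i$ by definition. A direct case analysis shows $|Y_i| \leq 8/9$ almost surely: when $|X_i| \leq 4/9$ we have $Z_i = 0$ and $Y_i = 2 X_i$, whereas when $|X_i| > 4/9$ we have $Y_i = (8/9)\,\sgn(X_i)$ exactly. In either case $Y \in (8/9) B_\infty^{n-1}$, and $(Y+Z)/2 = X$ pointwise.

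Next, I would compute the second moment of $Z_i$. By symmetry of $X_i$ on $[-1,1]$ and the substitution $u = 2x - 8/9$,
$$ \EE[Z_i^2] \;=\; \int_{4/9}^1 (2x - 8/9)^2 \, dx \;=\; \frac{1}{6} \cdot \Bigl( \frac{10}{9} \Bigr)^{\!3} \;=\; \frac{500}{2187}. $$
The crucial numerical check is $500/2187 < 3/10$. Hence by Markov's inequality applied to the nonnegative random variable $|Z|^2$,
$$ \PP\bigl[ |Z|^2 > 3n/10 \bigr] \;\leq\; \frac{\EE[|Z|^2]}{3n/10} \;=\; \frac{n-1}{n} \cdot \frac{5000}{6561} \;\leq\; \frac{5000}{6561}, $$
so $\PP[\,|Z|^2 \leq 3n/10\,] \geq 1 - 5000/6561 = 1561/6561 > 1/6$. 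On this event the required decomposition $X = (Y+Z)/2$ with $Y \in (8/9) B_\infty^{n-1}$ and $Z \in \sqrt{3n/10}\cdot B_2^{n-1}$ is produced by the formulas above, which proves the lemma.

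The proof relies on no concentration inequality beyond Markov. The only delicate point is the arithmetic: the truncation level $8/9$ and the target norm $\sqrt{3n/10}$ are well-matched precisely because $(10/9)^3/6 < 3/10$, leaving just enough slack for Markov's inequality to clear the threshold $1/6$. There is no significant obstacle beyond this numerical balancing.
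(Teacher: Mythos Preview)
Your proof is correct and follows essentially the same approach as the paper: the same coordinate-wise truncation at level $8/9$ (your formula for $Y_i$ is exactly the paper's three-case definition written compactly), the same computation $\EE Z_i^2 = \int_{4/9}^1 (2x-8/9)^2\,dx = 500/2187$, and the same application of Markov's inequality to $|Z|^2$. The only difference is cosmetic: the paper rounds $500/2187$ up to $1/4$ before applying Markov, obtaining the cruder bound $\PP(|Z|\geq\sqrt{3n/10})\leq 5/6$, whereas you keep the exact fraction and get the sharper $5000/6561$.
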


\begin{proof} We set
$$ Y_i = \left \{ \begin{array}{cl} 8/9 & \textrm{if } X_i > 4/9 \\ 2 X_i & \textrm{if} \ -4/9 \leq X_i \leq 4/9 \\ -8/9 & \textrm{if } X_i < -4/9 \end{array} \right. $$
and $Z_i = 2 X_i - Y_i$. Then for any $i$,
$$ \EE Z_i^2 = \int_{4/9}^1 (2t - 8/9)^2 dt = \int_0^{5/9} (2s)^2 ds = \frac{4}{3} \cdot \left( \frac{5}{9} \right)^3 < \frac{1}{4}. $$
Hence $\EE |Z|^2 < n / 4$, and consequently $\PP( |Z| \geq \sqrt{3 n / 10}) \leq 5/6$ by the Markov-Chebyshev inequality.
\end{proof}

It follows from  (\ref{eq_1228}) that
$$ K^{\circ} = \left \{ (t, x) \in \RR \times \RR^{n-1} \, ; \, |t| \leq 1, x \in K_0^{\circ} \ \textrm{and} \ x \in (1 - |t|) \cdot K_1^{\circ} \right \}. $$
Note that  $K_0^{\circ} = B_{\infty}^{n-1}$ while
$K_1^{\circ}$ is the convex hull of $B_{\infty}^{n-1}$ with $\sqrt{n / 3} \cdot B_2^{n-1}$. Lemma \ref{lem_214} implies that
for any $|t| \leq 1/20$,
\begin{align} \label{eq_231}
Vol_{n-1} & \left( \left \{ x \in \RR^{n-1} \, ; \, (t, x) \in K^{\circ} \right \} \right) \geq Vol_{n-1} \left(
\left \{ x \in B_{\infty}^{n-1} \, ; \, x \in \frac{19}{20}  K_1^{\circ} \right \} \right) \\ &
\geq Vol_{n-1} \left(
\left \{ x \in B_{\infty}^{n-1} \, ; \, x \in \frac{19}{20} \cdot \frac{B_{\infty}^{n-1} +  \sqrt{n / 3} \cdot B_2^{n-1}}{2}  \right \} \right) \geq \frac{1}{6} \cdot
Vol_{n-1} \left( B_{\infty}^{n-1} \right). \nonumber
\end{align}
Write $\alpha(t) = Vol_{n-1} \left( \left \{ x \in \RR^{n-1} \, ; \, (t, x) \in K^{\circ} \right \} \right)$.
Then $\alpha$ is supported in $[-1,1]$, and its maximum is attained at $t = 0$ by the Brunn-Minkowski inequality.
From (\ref{eq_231}) we learn that $\alpha(t) \geq \alpha(0) / 6$ for $|t| \leq 1/20$. Therefore,
\begin{equation} \int_{K^{\circ}} x_1^2 \frac{dx}{Vol_n(K^{\circ})} = \frac{\int_{-1}^1 s^2 \alpha(s) ds}{\int_{-1}^1 \alpha(s) ds} \geq \frac{\int_{-1/20}^{1/20} s^2 \cdot (\alpha(0) / 6) ds   }{2 \alpha(0)} \geq 10^{-6}. \label{eq_234} \end{equation}
Glancing at (\ref{eq_1228}) we see that the compact set $K \subseteq \RR^n$ is convex and unconditional.
The conclusion (\ref{eq_425}) of Proposition \ref{prop_1157} thus follows from (\ref{eq_1232}) and (\ref{eq_234}).
Since $K$ and $K^{\circ}$ are unconditional, their covariance matrices are positive-definite and diagonal. Hence,
with $e_1 = (1,0,\ldots,0) \in \RR^n$,
\begin{align*} \phi(K) & = \tr[ \cov(K^{\circ}) \cdot \cov(K) ] \geq \langle \cov(K^{\circ}) e_1, e_1 \rangle \cdot \langle
\cov(K) e_1, e_1 \rangle  \\ & = \int_K x_1^2 \frac{dx}{Vol_n(K)} \cdot \int_{K^{\circ}} x_1^2 \frac{dx}{Vol_n(K^{\circ})} \geq c.
\end{align*}
This completes the proof of Proposition \ref{prop_1157}.

\section{The floating body of a cone and self-convolution}

In this section we describe
various relations between the floating body of a convex cone, its Laplace transform
and its self-convolution.
 Given a convex set $A \subseteq \RR^n$ and a parameter $\delta > 0$, Sch\"utt and Werner \cite{SW} define the {\it floating body} $A_\delta$ as the intersection
of all closed half-spaces $H \subseteq \RR^n$ for which
$$ Vol_n(A \cap H) \geq \delta. $$
The floating body $A_{\delta} \subseteq A$ is closed and convex. When $V \subset \RR^{n+1}$ is a proper, convex cone, by homogeneity we have
$$ V_{\delta} = \delta^{1/(n+1)} \cdot V_1 \qquad \text{for all} \ \delta > 0.
$$
Clearly $V_{\delta} \subseteq \i(V)$. Recall the logarithmic Laplace transform $\Phi_V$ and its Legendre transform $\Phi_V^*$.

\begin{proposition} For any proper, convex cone $V \subset \RR^{n+1}$ and $\delta > 0$, we have $$ V_{\delta} = \left \{ x \in \i(V)
 \, ; \, \Phi_V^*(x) \leq \kappa_n - \log \delta \right \} $$ where  $\kappa_n = \log \left[ \left(\frac{n+1}{e} \right)^{n+1} / (n+1)! \right] $.
\label{prop_416}
\end{proposition}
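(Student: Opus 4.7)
The plan is to characterize $V_\delta$ as the intersection of a one-parameter family of half-spaces indexed by $y \in \i(V^*)$, compute the relevant volumes using the slicing identity that already underlies the Laplace transform $\Phi_V$, and then invoke formula (\ref{eq_958}) to reinterpret the resulting condition through the Legendre transform $\Phi_V^*$.

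First, I would classify the half-spaces entering the floating body: a closed half-space $H \subseteq \RR^{n+1}$ contributes non-trivially only when the ``cap'' $V \cap H^c$ is bounded (equivalently, has finite volume). A short case analysis using $V^*$ shows this forces $H = \{z \in \RR^{n+1} : \langle z, y \rangle \leq -c\}$ for some $y \in \i(V^*)$ and some $c > 0$; half-spaces with $V \subseteq H$ contribute trivially (empty cap) and their intersection is exactly $V$, giving $V_\delta \subseteq V$. So the task reduces to intersecting this two-parameter family, parameterized by $(y,c) \in \i(V^*) \times (0,\infty)$ subject to the volume constraint.

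Second, the same slicing argument as in (\ref{eq_1102}) — writing $t = -\langle z, y\rangle \in (0,c)$ so that the slice at level $t$ is $tK_y$ with Euclidean $n$-area $t^n\,Vol_n(K_y)$ and $dz = d\sigma_n\,dt/|y|$ — gives
\begin{equation*}
Vol_{n+1}(V \cap H^c) \;=\; \frac{c^{n+1}}{n+1}\cdot\frac{Vol_n(K_y)}{|y|}, \qquad \frac{Vol_n(K_y)}{|y|} \;=\; \frac{e^{\Phi_V(y)}}{n!}.
\end{equation*}
Hence $H$ contributes iff $c^{n+1} e^{\Phi_V(y)} \leq \delta\cdot(n+1)!$, while $x \in H$ reads $-\langle x, y\rangle \geq c$. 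For each fixed $y$ the strongest constraint on $x$ is obtained by the largest permitted $c$, so the requirement ``$x \in H$ for every contributing $H$'' collapses to
\begin{equation*}
(-\langle x, y\rangle)^{n+1}\, e^{\Phi_V(y)} \;\geq\; \delta\cdot(n+1)! \qquad \text{for every } y \in \i(V^*).
\end{equation*}

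Third, taking logarithms turns this into $\inf_{y \in \i(V^*)}\bigl[(n+1)\log(-\langle x, y\rangle) + \Phi_V(y)\bigr] \;\geq\; \log\delta + \log(n+1)!$, and formula (\ref{eq_958}) identifies the infimum on the left-hand side as $(n+1)\log\frac{n+1}{e} - \Phi_V^*(x)$. Rearranging yields $\Phi_V^*(x) \leq (n+1)\log\frac{n+1}{e} - \log(n+1)! - \log\delta$, which is exactly $\kappa_n - \log\delta$. The sublevel set is automatically contained in $\i(V)$ since $\Phi_V^*$ equals $+\infty$ outside $V$ and blows up near $\partial V$.

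The main obstacle I expect is the careful case analysis in the first step: one must rule out half-spaces with unbounded cap (where the condition ``cap volume $\leq \delta$'' is vacuously violated rather than defining a relevant constraint), and verify that every relevant cap is of the form above for a unique $(y,c)$ up to positive rescaling of $y$. Once that parameterization is pinned down, the rest of the argument is routine bookkeeping with the Laplace transform identity and the duality formula (\ref{eq_958}) from Section \ref{sec2}.
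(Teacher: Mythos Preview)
Your proposal is correct and follows essentially the same route as the paper's proof. The paper parameterizes the relevant caps directly by $y \in \i(V^*)$ normalized so that $\langle x, y\rangle = -1$, uses (\ref{eq_144}) to write $Vol_{n+1}(C_y) = e^{\Phi_V(y)}/(n+1)!$, and then invokes (\ref{eq_958}) exactly as you do; your two-parameter $(y,c)$ description followed by optimizing over $c$ is just the un-normalized version of the same computation, and your case analysis of half-spaces makes explicit what the paper takes for granted.
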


\begin{proof} Recall from  (\ref{eq_1102}) above that for any $y \in \RR^{n+1}$,
\begin{equation}  e^{\Phi_V(y)} = \int_V e^{\langle y, x \rangle} dx = \frac{n!}{|y|} \cdot Vol_{n-1}(K_y) = (n+1)! \cdot Vol_n(C_y), \label{eq_144}
\end{equation}
	where $K_y = \left \{ x \in V \, ; \, \langle x,y \rangle = -1 \right \}$ while
\begin{equation} C_y = \left \{ x \in V \, ; \, \langle x,y \rangle \geq -1 \right \}.
\label{eq_910} \end{equation}
	 A point $x \in \RR^{n+1}$ belongs to $V_{\delta}$ if and only if the following holds:
	The point $x$ belongs to $\i(V)$ and for any $y \in \i(V^*)$ with $\langle x, y \rangle = -1$,
	$$ Vol_{n+1}(C_y) \geq \delta. $$
	By homogeneity, we see that for any $x \in \i(V)$,
	$$
	x \in V_{\delta} \quad \Longleftrightarrow \quad \forall y \in \i(V^*), \quad  (- \langle x, y \rangle)^{n+1} \cdot Vol_{n+1}(C_y) \geq  \delta.
	$$
	From (\ref{eq_144}), for any $x \in \i(V)$
	we see that $x \in V_{\delta}$ if and only if
	\begin{equation*}
	(n+1)! \cdot \delta \leq \inf_{y \in \i(V^*)} e^{\Phi_V(y)} \cdot  (- \langle x, y \rangle)^{n+1}  = \left(\frac{n+1}{e} \right)^{n+1} \cdot e^{-\Phi_V^*(x)},
	\end{equation*}
where the last passage is the content of  formula (\ref{eq_958}) above.
\end{proof}

\begin{remark} {\rm
Given a boundary point $x \in \partial V_{\delta}$
we may look at the normal $N(x)$ to the smooth hypersurface $\partial V_{\delta}$ at the point $x$, pointing outwards of $V_{\delta}$, and satisfying
$$ |\langle N(x), x \rangle| = 1. $$
Then $N(x) = \nabla \Phi_V^*(x) / (n+1)$ by
Proposition \ref{prop_416}, and by (\ref{eq_1126}),
 $$
\Phi_V(N(x)) = (n+1) \log \frac{n+1}{e} - \Phi_V^*(x) = \log((n+1)!) + \log \delta. $$
It follows that the {\it polar hypersurface} to $\partial V_{\delta}$, which is defined as the left-hand side of the following formula, satisfies
$$ \{ N(x) \, ; \, x \in \partial V_{\delta} \} = \left \{ y \in V^* \, ; \, \Phi_V(y) = \log( (n+1)!) + \log \delta \right \}. $$
In other words, the level sets of the Laplace transform of the cone $V$
are the polar hypersurfaces to the boundaries of the floating bodies $V_{\delta}$.}
\end{remark}

\medskip
In addition to the convex functions $\Phi_{V^*}$ and $\Phi_V^*$, we shall introduce yet another
convex function that is canonically defined on a proper, convex cone $V$.
It is influenced by Schmuckenschl\"ager's work \cite{Schm}. For a proper, convex cone
$V \subset \RR^{n+1}$ and $x \in \i(V)$ we  define
$$ \Psi_V(x) = -\log (1_V * 1_V)(x) = -\log Vol_{n+1}(V \cap (x - V)), $$
the {\it self-convolution function} of the cone. Here $1_V$ is the
characteristic function of the set $V$, which attains the value $1$ in $V$ and vanishes elsewhere.
Since $V$ is convex,  the convolution $1_V * 1_V$ is a log-concave function by the Brunn-Minkowski inequality
and hence $\Psi_V$ is a convex function which is finite in $\i(V)$. Moreover,
\begin{equation}  \Psi_V(tx) = -(n+1) \log t - \log Vol_{n+1}(V/t \cap (x - V/t)) = -(n+1) \log t + \Psi_V(x). \label{eq_544} \end{equation}
Thus the convex function $\Psi_V$ has the same homogeneity as its sisters $\Phi_{V^*}$ and $\Phi_V^*$.
The convex function $\Psi_V: \i(V) \rightarrow \RR$ does not seem  smooth in general. However, it is certainly smooth when
the boundary of $K_y$ is smooth and strongly convex for some (and hence for all)  $y \in \i(V^*)$. Recall that for $x \in \i(V)$ we denote
$$ T_x = \{ z \in V^* \, ; \, \langle x, z \rangle = -1 \}. $$
We claim that the point $ y= \nabla \Phi_V^*(x) / (n+1)$ is the Santal\'o point of $T_x$. Indeed, since $\nabla \Phi_V^*$ is $(-1)$-homogeneous, $x = \nabla \Phi_V(y) / (n+1)$.
From Lemma
\ref{lem_1122} we know that $x$ is the barycenter of $K_y$.
Consequently $y$ is the Santal\'o point of $T_x$ as  explained in
Remark \ref{rem_1208}.

\begin{proposition} Let $V \subset \RR^{n+1}$ be a proper, convex cone. Then for any $x \in \i(V)$,
	\begin{equation} \Psi_V(x) \geq \Phi_{V}^*(x)  + \kappa_n, \label{eq_637}
	\end{equation}
	where $\kappa_n = \log(2^n (n+1)!) - (n+1) \log \left( \frac{n+1}{e} \right)$. There is equality in (\ref{eq_637})
if and only if $T_x$ is centrally-symmetric with respect to some point in $T_x$.

\smallskip Moreover, consider the case where $T_x$ is centrally-symmetric with respect to some point in $T_x$,
and where the boundary of $T_x$ is smooth and strongly convex. Then the equality in (\ref{eq_637}) holds to first order in $x$, and consequently in this case,
\begin{equation}  \nabla^2 \Psi_V(x) \geq \nabla^2 \Phi_{V}^*(x). \label{eq_933} \end{equation}
\label{cor_551}
\end{proposition}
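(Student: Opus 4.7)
The plan is to convert (\ref{eq_637}) into a geometric volume inequality between $V \cap (x-V)$ and the truncated cones $C_y$ of (\ref{eq_910}). Using $e^{\Phi_V(y)} = (n+1)! \cdot Vol_{n+1}(C_y)$ from (\ref{eq_144}), the Legendre representation (\ref{eq_958}), and the homogeneity of $\Phi_V$ (which reduces the supremum in the Legendre transform to an infimum over $y \in \i(V^*)$ with $\langle x,y\rangle = -1$), one computes
$$\Phi_V^*(x) = (n+1)\log\tfrac{n+1}{e} - \log((n+1)!) - \log \inf_{y} Vol_{n+1}(C_y).$$
Combining with $\Psi_V(x) = -\log Vol_{n+1}(V \cap (x-V))$ and the stated value of $\kappa_n$, inequality (\ref{eq_637}) becomes $\inf_y Vol_{n+1}(C_y) \geq 2^n \cdot Vol_{n+1}(V \cap (x-V))$. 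Hence it suffices to establish the pointwise bound $Vol_{n+1}(C_y) \geq 2^n \cdot Vol_{n+1}(V \cap (x-V))$ for every admissible $y$.

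For the pointwise bound I would slice $C_y$ by the hyperplane $H = \{z : \langle y, z\rangle = -1/2\}$. By homogeneity, the ``lower'' half $L_y = \{z \in V : -1/2 \leq \langle y, z\rangle \leq 0\}$ coincides with $\tfrac12 C_y$, so $Vol_{n+1}(L_y) = 2^{-(n+1)} Vol_{n+1}(C_y)$. Setting $A = V \cap (x-V)$, the inclusion $A \subseteq C_y$ follows from $y \in V^*$ and $\langle x,y\rangle=-1$, since $z, x-z \in V$ forces $\langle y, z\rangle = -1 - \langle y, x-z\rangle \geq -1$. The set $A$ is centrally symmetric about $x/2$, and since $\langle y, x/2\rangle = -1/2$ the hyperplane $H$ passes through the center of symmetry. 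Consequently $Vol(A \cap L_y) = Vol(A \cap (C_y \setminus L_y))$, and so
$$Vol(A) = 2\, Vol(A \cap L_y) \leq 2\, Vol(L_y) = Vol(C_y)/2^n,$$
which is the required pointwise estimate.

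For the equality case, equality in (\ref{eq_637}) forces $Vol(A \cap L_{y_*}) = Vol(L_{y_*})$ at the minimizing $y_*$, i.e.\ $L_{y_*} \subseteq A$ up to measure zero. Parametrizing $L_{y_*} = \{tz : z \in K_{y_*},\ 0 \leq t \leq 1/2\}$ and testing at $t=1/2$, the inclusion is equivalent to $2x - z \in V$ for every $z \in K_{y_*}$; since $\langle y_*, 2x-z\rangle = -1$ automatically, this is $2x - z \in K_{y_*}$, i.e., $K_{y_*}$ is centrally symmetric about $x$. Conversely, central symmetry of $K_{y_*}$ about $x$ gives $x - z/2 = (2x-z)/2 \in V$ and, by convexity of $V$ applied to the segment from $x \in V$ to $x - z/2 \in V$, yields $x - tz \in V$ for all $t \in [0,1/2]$, i.e., $L_{y_*} \subseteq A$. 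By Remark \ref{rem_1208}, $K_{y_*}$ is centrally symmetric about its barycenter $x$ if and only if $T_x$ is centrally symmetric about its Santal\'o point $y_*$, giving the stated characterization.

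The first-order statement and the Hessian inequality then follow formally. Under the smoothness and strong convexity hypotheses on $\partial T_x$, the function $\Psi_V$ is $C^2$ at $x$ (as remarked before the proposition), and central symmetry of $T_x$ gives $\Psi_V(x) = \Phi_V^*(x) + \kappa_n$. Since this is the global minimum of the smooth function $\Psi_V - \Phi_V^* - \kappa_n$ on $\i(V)$, its gradient at $x$ vanishes and its Hessian is positive semi-definite, which is (\ref{eq_933}). The main obstacle I anticipate is the equality-case bookkeeping: one must identify the minimizing $y_*$ with the Santal\'o point of $T_x$ (equivalently, with $\nabla \Phi_V^*(x)/(n+1)$) and cleanly move between the dual roles of ``barycenter'' and ``Santal\'o point'' via Remark \ref{rem_1208}, rather than in the convexity manipulations themselves.
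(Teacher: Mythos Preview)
Your argument is correct and, for the main inequality and its equality case, essentially identical to the paper's: you slice $V\cap(x-V)$ by the hyperplane $\{\langle y,\cdot\rangle=-1/2\}$ through its center of symmetry $x/2$ and compare with the half-truncated cone $\tfrac12 C_y$. The only cosmetic difference is that you phrase it as a pointwise bound $Vol(C_y)\geq 2^n Vol(V\cap(x-V))$ valid for every $y$ with $\langle x,y\rangle=-1$, while the paper fixes from the outset $y=\nabla\Phi_V^*(x)/(n+1)$ and then invokes the Legendre relation $\Phi_V^*(x)+\Phi_V(2y)=(n+1)\log\tfrac{n+1}{2e}$. Your worry about identifying the minimizing $y_*$ with the Santal\'o point of $T_x$ is easily resolved: the minimizer of $\Phi_V$ on $\{\langle x,\cdot\rangle=-1\}$ is by construction the point where the supremum in (\ref{eq_958}) is attained, i.e.\ $y_*=\nabla\Phi_V^*(x)/(n+1)$, and the paragraph before the proposition records that this is precisely the Santal\'o point of $T_x$.

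Where you genuinely diverge from the paper is in the ``Moreover'' part. You argue that $\Psi_V-\Phi_V^*-\kappa_n$ is $C^2$, nonnegative on $\i(V)$, and vanishes at $x$, so its gradient vanishes and its Hessian is positive semi-definite there; this gives both the first-order statement and (\ref{eq_933}) in one stroke. The paper instead observes that central symmetry of $K_y$ about $x$ produces a linear involution $S\in\a(V)$ fixing $x$, uses $S$-invariance to force $\nabla\Psi_V(x)$ and $\nabla\Phi_V^*(x)$ to lie in the one-dimensional $S^*$-fixed subspace, and then matches them via homogeneity; only after this does the paper derive (\ref{eq_933}) from the global inequality. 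Your route is shorter and requires no extra structure beyond the $C^2$-smoothness already asserted before the proposition; the paper's symmetry argument, while more elaborate, has the virtue of making the linear automorphism of $V$ induced by central symmetry explicit.
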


\begin{proof} Set $y = \nabla \Phi_{V}^*(x) / (n+1)$, the Santal\'o point of $T_x$. Then $\langle x, y \rangle = -1$ as $y \in T_x$.
For any
	 point
	$ z \in V \cap (x - V)$,
	the point $x - z$ also belongs to $V$. Therefore the convex body $V \cap (x - V)$ is centrally-symmetric around the point $x/2$.
	Consequently,
	\begin{equation}  Vol_{n+1} \left( \left \{ z \in V \cap (x - V) \, ; \, \left \langle z - \frac{x}{2}, y \right \rangle \geq 0 \right \}      \right) = \frac{1}{2} \cdot Vol_{n+1}(V \cap (x - V)). \label{eq_454} \end{equation}
	From (\ref{eq_454}) and from the fact that $\langle x, y \rangle = -1$,	
	\begin{equation} Vol_{n+1} \left( \left \{ z \in V \, ; \,   \langle z, y \rangle \geq -\frac{1}{2} \right \} \right) \geq \frac{1}{2} \cdot Vol_{n+1}(V \cap (x - V)). \label{eq_502} \end{equation}
The left-hand side of (\ref{eq_502}) equals $Vol_{n+1}(C_{2y})$ while the right-hand side equals $e^{-\Psi_V(x)} / 2$. Thanks to (\ref{eq_144}) we may rephrase (\ref{eq_502}) as
\begin{equation}  \Phi_V(2y) - \log (n+1)! \geq -\Psi_V(x) - \log 2. \label{eq_920}
\end{equation}
Since  $y = \nabla \Phi_{V}^*(x) / (n+1)$, from properties (\ref{eq_408}) and (\ref{eq_1126}) we obtain
\begin{equation} \Phi_V^*(x) + \Phi_V( 2 y) = (n+1) \log [(n+1)/(2e)]. \label{eq_928} \end{equation}
Now (\ref{eq_637}) follows from (\ref{eq_920}) and (\ref{eq_928}).

\smallskip     Equality in (\ref{eq_637}) is equivalent to equality in (\ref{eq_502}).
If equality holds in (\ref{eq_502}) then the closed convex set
	$V \cap (x - V)$ must contain the entire slice $K_{2y}$, or equivalently,
	$$ K_{2y} \cap (x - K_{2y}) \supseteq K_{2y}. $$
	This means that $K_{2y}$ is centrally-symmetric around the point $x/2 \in K_{2y}$.
This central symmetry condition is not only necessary but it is also sufficient for equality in (\ref{eq_502}),
as it implies that $V \cap (x - V)$ is a double cone with base $K_{2y}$ and apices $0$ and $x$,
which leads to equality in (\ref{eq_502}). We have thus proven that equality holds
in (\ref{eq_637}) if and only if $K_y$ is centrally-symmetric around $x$,
	which according to Remark \ref{rem_1208} happens if and only if $T_x$ is
	centrally-symmetric around $y$. Note that if $T_x$ is centrally-symmetric with respect to some point, then this point must be the Santal\'o point $y$.

\smallskip We move on to the ``Moreover'' part. Assume that $T_x$ has a smooth and strongly convex boundary,
and that it is centrally-symmetric with respect to a certain point $y \in T_x$.
 Then $y \in \i(V^*)$ with
$\langle x, y \rangle = -1$, and $K_y$ is centrally-symmetric around the point $x$.
We thus see that the cone $V$ has a non-trivial symmetry, which is the linear map
$$ S:\RR^{n+1} \rightarrow \RR^{n+1} $$
with $S(x) = x$ and $S(z) = -z$ for any $z \in y^{\perp}$.
Since $S(V) = V$, the functions  $\Phi_V^* \circ S - \Phi_V^*$ and $\Psi_V \circ S - \Psi_V$ are constant in $\i(V)$. It follows that for any $z \in \i(V)$,
$$ S^* \left( \nabla \Phi_V^*(Sz) \right)= \nabla \Phi_V^*(z)
\qquad \text{and} \qquad
S^* \left( \nabla \Psi_V(Sz) \right)= \nabla \Psi_V(z). $$
In particular, at the point $x$, which is a fixed point of the symmetry $S$, the gradients  $\nabla \Phi_V^*(x)$ and $\nabla \Psi_V(x)$ are both fixed points of $S^*$. However, the eigenspace of $S^*$ that corresponds to the eigenvalue one is $1$-dimensional, as $S^*(y) = y$ and $S^*(z) = -z$ for $z \in x^{\perp}$.
Hence the vectors $\nabla \Phi_V^*(x)$ and $\nabla \Psi_V(x)$ are proportional.
The homogeneity relations (\ref{eq_408}) and (\ref{eq_544}) yield
$$ \langle \nabla \Psi_V(x), x \rangle = -(n+1) =
\langle \nabla \Phi_V^*(x), x \rangle. $$
Since the gradients are proportional, then necessarily $\nabla \Psi_V(x) = \nabla \Phi_V^*(x)$,
and the equality in (\ref{eq_637}) is to first order. The Hessian inequality (\ref{eq_933}) follows.
 \end{proof}

The following is a crude reverse form of Proposition \ref{cor_551}:

\begin{proposition} Let $V \subset \RR^{n+1}$ be a proper, convex cone. Then for any $x \in \i(V)$,
	\begin{equation} \Psi_V(x) \leq \Phi_{V}^*(x)  + C n, \label{eq_939}
	\end{equation} \label{lem_822}
where $C > 0$ is a universal constant.\end{proposition}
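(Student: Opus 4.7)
The plan is to carry out a slicing argument for $V \cap (x-V)$ in the spirit of the one used in Proposition~\ref{cor_551}, replacing the central-symmetry observation there with a Fradelizi-type lower bound on the symmetric self-intersection of a centered convex body. By the homogeneity identities (\ref{eq_408}) and (\ref{eq_544}), both sides of (\ref{eq_939}) transform identically under $x \mapsto tx$, so I may normalize by taking $y \in \i(V^*)$ to be the Santal\'o point of $T_x$; then $\langle x, y \rangle = -1$, and Remark~\ref{rem_1208} gives $x = \b(K_y)$. Slicing $V \cap (x-V)$ by the affine hyperplanes $\{z \, ; \, \langle z, y \rangle = -t\}$ for $t \in [0,1]$ produces slices of the form $tK_y \cap (x - (1-t) K_y)$. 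In particular, the central slice at $t = 1/2$ equals $(1/2)\bigl(K_y \cap (2x - K_y)\bigr)$, and after recentering $K_y$ at its barycenter $x$ this is a scaled copy of $K' \cap (-K')$ where $K' := K_y - x$ has barycenter at the origin.

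The key auxiliary ingredient is the lemma: \emph{for any convex body $K \subseteq \RR^n$ with $\b(K) = 0$ one has $Vol_n(K \cap (-K)) \geq (2e)^{-n} Vol_n(K)$}. I would prove this by considering the self-convolution $g := 1_K * 1_K$. By Pr\'ekopa--Leindler, $g$ is log-concave; its support is $K + K = 2K$, of $n$-volume $2^n Vol_n(K)$; its total mass equals $Vol_n(K)^2$; and a direct computation shows that $g$, viewed as a nonnegative measure, has barycenter $2 \b(K) = 0$. Fradelizi's inequality then gives $g(0) \geq e^{-n} \sup g$, while the trivial bound $\sup g \geq \int g / Vol_n(2K) = Vol_n(K)/2^n$ combines to yield $g(0) \geq (2e)^{-n} Vol_n(K)$, which is the claim since $g(0) = Vol_n(K \cap (-K))$. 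Applied to $K' = K_y - x$, this gives $Vol_n(K_y \cap (2x - K_y)) \geq (2e)^{-n} Vol_n(K_y)$, so the central slice above has $n$-volume at least $(4e)^{-n} Vol_n(K_y)$.

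To convert this pointwise estimate into a bound on the full $(n+1)$-volume, I would use Brunn's concavity principle: the function $t \mapsto Vol_n(tK_y \cap (x - (1-t) K_y))^{1/n}$ is concave on $[0,1]$ and vanishes at the endpoints, so dominating it below by the tent function pinned at the central value and integrating gives $\int_0^1 Vol_n(tK_y \cap (x - (1-t)K_y))\, dt \geq (4e)^{-n} Vol_n(K_y)/(n+1)$. Dividing by $|y|$ to pass from the $y$-slice integral to $(n+1)$-dimensional Lebesgue measure yields $e^{-\Psi_V(x)} \geq (4e)^{-n} Vol_n(K_y)/(|y|(n+1))$. Finally, the direct analog of (\ref{eq_1102}) gives $e^{\Phi_V(y)} = n!\, Vol_n(K_y)/|y|$, and (\ref{eq_1126}) together with Lemma~\ref{lem_1122} and the $(-1)$-homogeneity of $\nabla \Phi_V$ yield $\Phi_V^*(x) = (n+1) \log((n+1)/e) - \Phi_V(y)$, hence $e^{-\Phi_V^*(x)} = e^{n+1} n!\, Vol_n(K_y)/((n+1)^{n+1} |y|)$. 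Taking the ratio and applying Stirling's formula gives $\Psi_V(x) - \Phi_V^*(x) \leq Cn$ for any universal $C > \log(4e)$. The main obstacle in the plan is the centered-intersection lemma; everything else is bookkeeping with Brunn's principle and the formulas already established in Section~\ref{sec2}.
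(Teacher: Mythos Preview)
Your proof is correct and follows the same overall scheme as the paper: fix $y$ as the Santal\'o point of $T_x$, slice $V \cap (x-V)$ by the hyperplanes $\{\langle z, y\rangle = -t\}$, lower-bound the slice volumes using that $K_y - x$ is centered, integrate, and convert to $\Phi_V^*$ via the Legendre relation $\Phi_V^*(x) + \Phi_V(y) = (n+1)\log((n+1)/e)$.

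The difference lies in how you handle the slice estimate. The paper invokes directly the Milman--Pajor inequality (Adv.\ Math.\ 2000): for a centered convex body $K'$, one has $Vol_n(tK' \cap (1-t)(-K')) \geq t^n(1-t)^n Vol_n(K')$ for \emph{every} $t \in [0,1]$, which after integration gives $e^{-\Psi_V(x)} \geq \frac{n!}{(2n+1)!}\, e^{\Phi_V(y)}$. You instead prove a self-contained lower bound only at the central slice $t = 1/2$ (via the convolution $1_{K'} * 1_{K'}$ and Fradelizi's inequality), and then use Brunn's concavity principle to tent it out over $[0,1]$. Both routes work; the Milman--Pajor citation makes the Brunn step superfluous and yields the slightly sharper constant $2^{-n}$ for $Vol_n(K' \cap (-K'))$ (which is in fact optimal, attained by the simplex) as opposed to your $(2e)^{-n}$. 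Conversely, your argument has the virtue of being self-contained modulo Pr\'ekopa--Leindler and Fradelizi, both of which are arguably more elementary than the Milman--Pajor result. Since only a bound of the form $c^n$ is needed, the loss in the constant is immaterial for the proposition.
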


\begin{proof} Set $y = \nabla \Phi_{V}^*(x) / (n+1)$, the Santal\'o point of $T_x$. Then $\b(K_y) = x$ by Lemma \ref{lem_1122}. By Fubini's theorem,
$$ Vol_{n+1}(V \cap (x - V)) = \frac{1}{|y|} \int_0^1 Vol_n[ t K_y \cap (x - (1-t) K_y)] dt.
 $$
The barycenter of $K_y-x$ lies at the origin, and by Milman and Pajor \cite[page 321]{MiP2},
$$
Vol_n[ t K_y \cap (x - (1-t) K_y)] = Vol_n[( t (K_y -x)   \cap  (1-t) ( x - K_y) ]
\geq t^n (1-t)^n Vol_n(K_y). $$
Therefore,
\begin{equation} e^{-\Psi_V(x)} = Vol_{n+1}(V \cap (x - V)) \geq \frac{Vol_{n}(K_y)}{|y|} \int_0^1 t^n (1-t)^n dt = \frac{n! \cdot e^{\Phi_V(y)}}{(2n+1)!} ,
\label{eq_1000}
\end{equation}
where we used (\ref{poly}) in the last passage. As in (\ref{eq_928}) above, we know that
\begin{equation}  \Phi_V^*(x) + \Phi_V( y) = (n+1) \log [(n+1)/e]. \label{eq_1001_}
\end{equation}
Now (\ref{eq_939}) follows from (\ref{eq_1000}), (\ref{eq_1001_}) and the fact that $(2n+1)! \cdot e^{n+1} \leq C^n \cdot n! \cdot (n+1)^{n+1}$.
\end{proof}

\section{The isomorphic slicing problem}

In this section we prove Theorem \ref{thm_903}. We begin with a formula for  the isotropic constant of a hyperplane section of $V$.
Recall that for any $A \in \RR^{(n+1) \times (n+1)}$ and $v \in \RR^{n+1}$,
$$ \det(A + v v^*) = \det(A) + v^* {\rm Adj}(A) v  $$
where ${\rm Adj}(A)$ is the adjoint matrix.

\begin{lemma} For any proper, convex cone $V \subset \RR^{n+1}$ and $y \in \i(V^*)$,
\begin{equation}  \det \nabla^2 \Phi_V(y) = \kappa_n \cdot L_{K_y}^{2n} \cdot e^{ 2 \Phi_V(y)}
\label{eq_455}
\end{equation}
with $\kappa_n = (n+1)^{n+1} \cdot (n+2)^n / (n!)^2$.  \label{lem_208}
\end{lemma}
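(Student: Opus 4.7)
The plan is to compute $\det \nabla^2 \Phi_V(y)$ by applying Lemma \ref{lem_1122}, which gives
\[ \nabla^2 \Phi_V(y) = (n+2)(n+1)\,\cov(K_y) + (n+1)\,\b(K_y)\b^*(K_y), \]
and then using the suggested rank-one update formula $\det(A+vv^*) = \det(A) + v^*\text{Adj}(A)\,v$ with $A = (n+2)(n+1)\cov(K_y)$ and $v = \sqrt{n+1}\,\b(K_y)$.

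First I would observe that since $K_y$ lies in the affine hyperplane $\{z : \langle z, y\rangle = -1\}$, every sample from the uniform distribution on $K_y$ has the same inner product with $y$. This forces $\cov(K_y)\,y = 0$, so the $(n+1)\times(n+1)$ matrix $A$ has rank exactly $n$. In particular $\det A = 0$, and the rank-one update collapses to
\[ \det \nabla^2 \Phi_V(y) = v^* \text{Adj}(A)\, v. \]
The adjugate of a positive semidefinite $(n+1)\times(n+1)$ matrix of rank $n$ with kernel spanned by $y$ is explicitly $\text{Adj}(A) = \bigl(\prod_{i=1}^n \lambda_i\bigr) y y^*/|y|^2$, where the $\lambda_i$ are the nonzero eigenvalues of $A$. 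Splitting $\RR^{n+1} = \RR \hat y \oplus y^{\perp}$, the covariance matrix $\cov(K_y)$ is block diagonal with a zero entry in the $\hat y$ direction and an $n \times n$ block equal to the covariance $\Sigma$ of $K_y$ viewed as a convex body in its affine hull. Hence the nonzero eigenvalues of $A$ are those of $(n+2)(n+1)\Sigma$, giving $\prod_i \lambda_i = [(n+2)(n+1)]^n \det \Sigma$.

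Using $\langle \b(K_y), y\rangle = -1$ (since $\b(K_y) \in K_y$), this assembles to
\[ \det \nabla^2 \Phi_V(y) = \frac{(n+1)^{n+1}(n+2)^n}{|y|^2}\,\det \Sigma. \]
The definition (\ref{eq_212}) of the isotropic constant applied to the $n$-dimensional body $K_y$ gives $\det \Sigma = L_{K_y}^{2n} \cdot Vol_n(K_y)^2$. From (\ref{eq_1102}) together with the homogeneity (\ref{eq_408}) (applied to $y = |y|\hat y$ with $\hat y$ a unit vector), one obtains $e^{\Phi_V(y)} = n!\,Vol_n(K_y)/|y|$, and hence $Vol_n(K_y)^2/|y|^2 = e^{2\Phi_V(y)}/(n!)^2$. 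Substituting yields the formula (\ref{eq_455}) with $\kappa_n = (n+1)^{n+1}(n+2)^n/(n!)^2$.

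The only real obstacle is bookkeeping: tracking the factor $1/|y|$ arising from the Fubini decomposition of $\Phi_V$ when $y$ is not a unit vector, and making sure to identify the nonzero eigenvalues of the singular $(n+1)\times(n+1)$ covariance $\cov(K_y)$ with those of the $n \times n$ covariance $\Sigma$ inside the hyperplane, so that the isotropic constant of $K_y$ appears correctly.
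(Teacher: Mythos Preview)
Your proposal is correct and follows essentially the same route as the paper's proof: both apply the rank-one update formula to the expression from Lemma \ref{lem_1122}, use that $\cov(K_y)$ has rank $n$ with kernel spanned by $y$, compute its adjugate as a scalar multiple of $yy^*/|y|^2$, and finish via $\langle \b(K_y),y\rangle=-1$ together with the relation $e^{\Phi_V(y)}=n!\,Vol_n(K_y)/|y|$ from (\ref{poly}). The only cosmetic differences are that the paper phrases the product of nonzero eigenvalues as ${\rm det}_n\cov(K_y)$ and applies the rank-one formula after dividing by $(n+1)(n+2)$ rather than before.
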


\begin{proof} By Lemma \ref{lem_1122},
$$ \frac{\nabla^2 \Phi_V(y)}{(n+1) (n+2)} =  \cov(K_y) + \frac{b(K_y) b^*(K_y)}{n+2}. $$
The symmetric matrix $\cov(K_y)$ is of rank $n$, with the vector $y$ spanning its kernel. Therefore the adjoint matrix of $\cov(K_y)$ is
$$ {\rm det}_n \cov(K_y) \cdot \frac{y y^*}{|y|^2}, $$
where ${\rm det}_n(A)$ stands for the sum of the determinants    of all principal $n \times n$ minors of a matrix $A \in \RR^{(n+1) \times (n+1)}$.
Consequently,
$$ \det \nabla^2 \Phi_V(y) = (n+1)^{n+1} \cdot (n+2)^{n+1} \cdot \frac{{\rm det}_n \cov(K_y)}{|y|^2} \cdot \frac{\langle y, \b(K_y) \rangle^2}{n+2}. $$
However, $\langle \b(K_y), y \rangle = -1$
as $\b(K_y) \in K_y = \{ x \in V \, ; \, \langle x,y \rangle = -1 \}$. Hence, by (\ref{poly}),
$$ L_{K_y}^{2n} = \frac{{\rm det}_n \cov(K_y)}{Vol_n(K_y)^2} = \frac{(n!)^2}{(n+1)^{n+1} \cdot (n+2)^n} \cdot \frac{\det \nabla^2 \Phi_V(y)}{e^{2 \Phi_V(y)}},  $$
and the formula follows.
\end{proof}

The role of the determinant $\nabla^2 \Phi_V$ is twofold: First, it appears in the expression
for the isotropic constant in Lemma \ref{lem_208}. Second, it is the Jacobian determinant
of the diffeomorphism $\nabla \Phi_V: \i(V^*) \rightarrow \i(V)$.
The next lemma describes a certain
geometric property of this map. Recall
that for $y \in \i(V^*)$ we denote $C_y = \left \{ x \in V \, ; \, \langle x, y \rangle \geq -1 \right \}$, a truncated cone.

\begin{lemma} Let $V \subset \RR^{n+1}$ be a proper, convex cone and let $y \in \i(V^*)$.
	Then,
	$$ \nabla \Phi_V( y + V^*) \subseteq (n+1) \cdot C_y. $$
\label{lem_1120}
\end{lemma}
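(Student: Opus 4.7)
The plan is to unpack the definition $(n+1) \cdot C_y = \{u \in V \,:\, \langle u, y \rangle \geq -(n+1)\}$ and verify both conditions for any point $\nabla \Phi_V(z)$ with $z \in y + V^*$. Write $z = y + w$ with $w \in V^*$. Two ingredients will do all the work: Lemma \ref{lem_1122}, which says $\nabla \Phi_V(z) = (n+1) \cdot \b(K_z)$, and the Euler identity $\langle \nabla \Phi_V(z), z \rangle = -(n+1)$, obtained by differentiating the homogeneity relation (\ref{eq_408}) at $t=1$.

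For membership in $V$, note that $K_z \subseteq V$ is a convex body in a hyperplane, hence its barycenter $\b(K_z)$ lies in $K_z \subseteq V$; since $V$ is a cone, multiplication by $n+1$ keeps us inside $V$. For the inequality $\langle \nabla \Phi_V(z), y \rangle \geq -(n+1)$, split the Euler identity using $z = y + w$:
\begin{equation*}
\langle \nabla \Phi_V(z), y \rangle \;=\; -(n+1) \;-\; \langle \nabla \Phi_V(z), w \rangle.
\end{equation*}
Since $\nabla \Phi_V(z) \in V$ by the previous step and $w \in V^*$, the defining property of the dual cone gives $\langle \nabla \Phi_V(z), w \rangle \leq 0$, hence $\langle \nabla \Phi_V(z), y \rangle \geq -(n+1)$. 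Combining both, $\nabla \Phi_V(z) \in (n+1) \cdot C_y$.

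There is no real obstacle: the statement is essentially an algebraic consequence of the $(-1)$-homogeneity of $\nabla \Phi_V$ together with duality. If anything merits a sentence of care, it is the observation that the geometric content of the lemma is really that \emph{translating $y$ by an element of the dual cone moves the barycenter of the corresponding slice $K_z$ into the truncated cone $C_y$}, which fits naturally with the interpretation of $C_y$ as the convex region of $V$ cut off by the hyperplane $\{\langle \cdot, y\rangle = -1\}$ containing $K_y$.
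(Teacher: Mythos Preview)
Your proof is correct. The argument is airtight: $z = y + w \in \i(V^*)$ (since translating an interior point of the cone $V^*$ by an element of $V^*$ stays in the interior), so $\nabla \Phi_V(z)$ is well-defined; the Euler identity and the dual-cone inequality then give exactly the two conditions defining $(n+1)C_y$.

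The paper reaches the same conclusion by a more geometric route. Rather than invoking the Euler identity directly, it observes that for $z \in y + \i(V^*)$ the hyperplane sections $K_y$ and $K_z$ are disjoint (since $\langle x, z-y\rangle < 0$ for $x \in K_y$), and then argues that the connected set $K_z$ must lie entirely in the truncated cone $C_y$; hence its barycenter $\b(K_z)$ lies in $C_y$, i.e., $\nabla\Phi_V(z) \in (n+1)C_y$. Unwinding this, the paper is implicitly using that $\langle \b(K_z), z\rangle = -1$ together with $\langle \b(K_z), z-y\rangle \le 0$, which is precisely your Euler identity divided by $n+1$. So the two arguments are the same computation in different clothing: yours is the cleaner algebraic version and handles all of $y + V^*$ at once without needing to pass to the interior first, while the paper's phrasing makes the geometric picture (slices sliding into the truncated cone) more visible and feeds naturally into the remark that follows the lemma.
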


\begin{proof} It suffices
	to prove that the image of the open set $y + \i(V^*)$ under the diffeomorphism $\nabla \Phi_V$ is contained in the closed set $(n+1) C_y$. Let  $z \in y + \i(V^*)$. Then for any $x \in K_y$
we have $ \langle x, z - y \rangle < 0$. This means that $$ K_y \cap K_z = \emptyset, $$ as there is no point $x \in K_y$ with $\langle x, z \rangle =
\langle x, y \rangle = -1$. The convex set $K_y$ disconnects the cone $V$ into two connected components. The set $K_z$ must be contained in the convex set $C_y$, and hence its barycenter satisfies
$ \b(K_z) \in C_y$. By Lemma \ref{lem_1122}, we know that $\nabla \Phi_V(z) = (n+1) \cdot \b(K_z)$, and the conclusion of the lemma follows.
\end{proof}

\begin{remark} {\rm From the proof of Lemma \ref{lem_1120} we obtain a simple geometric interpretation of the set $\nabla \Phi_V( y + \i(V^*))$.
		Namely, this set consists of all barycenters of all hyperplane sections of the truncated cone $(n+1) \cdot C_y$ that are disjoint from the base of this truncated cone. Additionally, a simple modification of the proof of Lemma \ref{lem_1120} shows that
		$$ \nabla \Phi_V( y - V^*) \subseteq V \setminus (n+1) C_y. $$
} \end{remark}

\begin{lemma} Let $V \subset \RR^{n+1}$ be a proper, convex cone, $y_0 \in \i(V^*)$ and let $0 < \eps < 1$. Then there exists
a point $y \in \i(V^*)$ such that $y - y_0 \in V^* \cap (\eps y_0 - V^*)$ and
\begin{equation} L_{K_{y}} \leq C / \sqrt{\eps}
 \label{eq_154_} \end{equation}
where $C > 0$ is a universal constant. \label{lem_413}
\end{lemma}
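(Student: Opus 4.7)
The plan is to perform a Markov-type averaging of $L_{K_y}^{2n}$ over the set $R := (y_0 + V^*) \cap ((1+\eps) y_0 - V^*) \subseteq \i(V^*)$, which is exactly the set of admissible $y$. If the average of $L_{K_y}^{2n}$ over $R$ is at most $(C/\sqrt{\eps})^{2n}$, then some $y \in R$ attains this bound.

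Two integrals are needed. For $Vol_{n+1}(R)$, the translation $z = y - y_0$ gives $Vol_{n+1}(R) = Vol_{n+1}(V^* \cap (\eps y_0 - V^*)) = \eps^{n+1} e^{-\Psi_{V^*}(y_0)}$, using the definition of $\Psi_{V^*}$ and its homogeneity. For $\int_R \det \nabla^2 \Phi_V(y)\,dy$, I use Corollary \ref{cor_diffeo} to change variables, obtaining $Vol_{n+1}(\nabla \Phi_V(R))$. Lemma \ref{lem_1120} and the subsequent remark, combined with the homogeneity $C_{(1+\eps) y_0} = (1+\eps)^{-1} C_{y_0}$, show that $\nabla \Phi_V(R)$ lies in the annular region $(n+1) C_{y_0} \setminus \frac{n+1}{1+\eps} C_{y_0}$, whose volume is at most $\bigl(1 - (1+\eps)^{-(n+1)}\bigr) \cdot \frac{(n+1)^{n+1}}{(n+1)!}\, e^{\Phi_V(y_0)} \leq (n+1)\eps \cdot \frac{(n+1)^{n+1}}{(n+1)!}\, e^{\Phi_V(y_0)}$ by the elementary inequality $1 - e^{-t} \leq t$.

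Combining via Lemma \ref{lem_208} (which gives $\det \nabla^2 \Phi_V(y) = \kappa_n L_{K_y}^{2n} e^{2\Phi_V(y)}$) and the cone-order monotonicity $\Phi_V(y) \geq \Phi_V((1+\eps) y_0) = \Phi_V(y_0) - (n+1) \log(1+\eps)$ on $R$, Markov's inequality yields
\begin{equation*}
\min_{y \in R} L_{K_y}^{2n} \leq \frac{1}{Vol_{n+1}(R)} \int_R L_{K_y}^{2n}\,dy \leq \frac{(1+\eps)^{2(n+1)} (n+1)^{n+2}}{\kappa_n (n+1)!\, \eps^n}\, e^{\Psi_{V^*}(y_0) - \Phi_V(y_0)},
\end{equation*}
where one factor of $\eps$ has been absorbed into $\eps^{n+1}$ by the $(n+1)\eps$ bound above.

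The final ingredient is $\Psi_{V^*}(y_0) - \Phi_V(y_0) \leq Cn$. Proposition \ref{lem_822} applied to the cone $V^*$ gives $\Psi_{V^*}(y_0) \leq \Phi_{V^*}^*(y_0) + Cn$, while Corollary \ref{lem_842} identifies $\Phi_{V^*} - \Phi_V^*$ as $\kappa_n + \log \bar{s}(T_\cdot)$; the Bourgain-Milman inequality $\bar{s}(T_x) \geq c^n s(\Delta^n)$ then gives $\Phi_{V^*}(x) \geq \Phi_V^*(x) - C'n$ uniformly in $x$, and taking Legendre transforms (which reverses the inequality) yields $\Phi_{V^*}^*(y_0) \leq \Phi_V(y_0) + C'n$. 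The explicit identity $(n+1)^{n+2}/[\kappa_n(n+1)!] = n!/(n+2)^n \leq C^n e^{-n}$ (Stirling) and the crude bound $(1+\eps)^{2(n+1)} \leq 4^{n+1}$ collapse the remaining exponential-in-$n$ factors into a universal constant, producing $\min_{y \in R} L_{K_y}^{2n} \leq C^{2n}/\eps^n$, i.e., $L_{K_y} \leq C/\sqrt{\eps}$ for some $y \in R$. The main obstacle is this last estimate: all the rest is an explicit volume computation plus the diffeomorphism property of $\nabla \Phi_V$, but bounding $\Psi_{V^*}(y_0) - \Phi_V(y_0)$ imports the Bourgain-Milman inequality through Corollary \ref{lem_842}, which is the only non-trivial external input to the argument.
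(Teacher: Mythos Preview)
Your argument is correct and follows the paper's strategy closely: lower-bound the volume of the admissible region $R$ via $\Psi_{V^*}$ and Proposition~\ref{lem_822}, upper-bound $\int_R \det\nabla^2\Phi_V$ by changing variables through the diffeomorphism $\nabla\Phi_V$, use cone-order monotonicity of $\Phi_V$, and close the loop with Corollary~\ref{lem_842} plus Bourgain--Milman to control $\Psi_{V^*}(y_0)-\Phi_V(y_0)$. Your Legendre-transform step is equivalent to the paper's direct application of Corollary~\ref{lem_842} with $V$ replaced by $V^*$.

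The one genuine difference is that you invoke the Remark after Lemma~\ref{lem_1120} in addition to the lemma itself, confining $\nabla\Phi_V(R)$ to the \emph{annulus} $(n+1)C_{y_0}\setminus\frac{n+1}{1+\eps}C_{y_0}$ rather than the full truncated cone $(n+1)C_{y_0}$ used in the paper. This gains an extra factor of $\eps$, so your denominator carries $\eps^n$ rather than $\eps^{n+1}$, and the $2n$-th root gives exactly $\eps^{-1/2}$. The paper instead gets $\eps^{-(n+1)/(2n)}$ and must begin with the case split $\eps>e^{-n}$ (invoking the trivial bound $L_K\le C\sqrt{n}$ otherwise) to reduce this to $\eps^{-1/2}$. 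Your refinement therefore streamlines the proof by eliminating that case distinction, at the modest cost of appealing to the Remark, whose proof is only sketched in the paper.
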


\begin{proof}
In this proof $C, \tilde{C}, \bar{C}, \hat{C} > 0$ denote various positive universal constants, whose value may change from one line to the next.
We may assume that $\eps > e^{-n}$ since otherwise conclusion (\ref{eq_154_}) follows from
	the trivial upper bound $L_{K_{y_0}} \leq C \sqrt{n}$ (see, e.g., \cite{BGVV}).
Define
$$ S = (y_0 + V^*) \cap (y_0 + \eps y_0 - V^*). $$
According to Proposition \ref{lem_822},
\begin{equation}  Vol_{n+1}(S) = Vol_{n+1}( V^* \cap (\eps y_0 - V^*) ) = \exp(-\Psi_{V^*}(\eps y_0)) \geq \exp(-C n - \Phi_{V^*}^*(\eps y_0)).
\label{eq_840}
\end{equation}
We would like to get rid of the expression $\Phi_{V^*}^*(\eps y_0)$, and replace it by $\Phi_V(\eps y_0)$ plus an error term.
To this end, we may use the ``commutation relation'' of Corollary \ref{lem_842}, according to which for any $y \in \i(V^*)$,
\begin{equation}  \Phi_V(y) - \Phi_{V^*}^*(y) = \log \frac{(n!)^2 \cdot e^{n+1}}{(n+1)^{n+1}}  + \log \bar{s}(K_y) \geq n \log n - C n + \log \bar{s}(K_y). \label{eq_841}
\end{equation}
However, from the Bourgain-Milman inequality
(\ref{eq_531}), we know that $\bar{s}(K_y) \geq (c / n)^n$ for some universal constant $c > 0$. Hence, from (\ref{eq_840}) and
(\ref{eq_841}),
\begin{equation}
Vol_{n+1}(S) \geq \exp(-\tilde{C} n - \Phi_V(\eps y_0)).
\label{eq_842} \end{equation}
For any $y \in S$ we know that $y - (1 + \eps) y_0 \in -V^*$, and hence the scalar product of $y - (1 + \eps) y_0$ with $\nabla \Phi_V(y_0 + \eps y_0) \in V$
is non-negative. By the convexity of $\Phi_V$, for any $y \in S$,
\begin{equation} \Phi_V(y)   \geq \Phi_V( y_0 + \eps y_0 ) + \langle \nabla \Phi_V( y_0 + \eps y_0 ), y - (1+\eps) y_0 \rangle \geq \Phi_V(y_0 + \eps y_0).
\label{eq_850_} \end{equation}
From (\ref{eq_842}) and (\ref{eq_850_})
\begin{equation} \int_S e^{2 \Phi_V(y)} dy \geq e^{-\tilde{C} n} \cdot e^{2 \Phi_V( (1 + \eps) y_0 ) - \Phi_V(\eps y_0) }
= e^{-\tilde{C} n} \cdot  \left( \frac{\eps}{(1 + \eps)^2} \right)^{n+1} \cdot  e^{\Phi_V(y_0)},
 \label{eq_219} \end{equation}
 where we used the homogeneity relation (\ref{eq_408}) in the last psssage.
According to Lemma \ref{lem_1120}, the set $\nabla \Phi_V(S)$ is contained in the truncated cone $(n+1) \cdot C_{y_0}$.
Corollary \ref{cor_diffeo} states that the map
$\nabla \Phi_V$ is a diffeomorphism. By changing variables,
\begin{equation} Vol_{n+1}((n+1) \cdot C_{y_0}) \geq Vol_{n+1} \left( \nabla \Phi_V(S) \right) = \int_S \det \nabla^2 \Phi_V(y) dy = \kappa_n \int_S e^{2 \Phi_V(y)} \cdot L_{K_y}^{2n} dy,
\label{eq_125_}
\end{equation}
where $\kappa_n = (n+1)^{n+1} \cdot (n+2)^n / (n!)^2 \geq 1$ is the coefficient  from Lemma \ref{lem_208}. Recall
the formula $Vol_{n+1}(C_{y_0}) = \exp(\Phi_V(y_0)) / (n+1)!$ according to (\ref{eq_144}). We thus deduce from (\ref{eq_125_}) that
\begin{equation} \frac{(n+1)^{n+1}}{(n+1)!}
e^{\Phi_V(y_0)} \geq \inf_{y \in S} L_{K_y}^{2n} \cdot  \int_S e^{2 \Phi_V(y)} dy.
\label{eq_1251_}
\end{equation}
From (\ref{eq_219}) and (\ref{eq_1251_}),
$$  \frac{(n+1)^{n+1}}{(n+1)!} \cdot e^{\Phi_V(y_0)} \geq \inf_{y \in S} L_{K_y}^{2n} \cdot  e^{-\tilde{C} n} \cdot  \left( \frac{\eps}{(1 + \eps)^2} \right)^{n+1} \cdot  e^{\Phi_V(y_0)}. $$
This implies that there exists $y \in S$ for which
$$ L_{K_y}^{2n} \leq \left( \frac{\hat{C}}{\eps  } \right)^{n+1}
\leq \left( \frac{\bar{C}}{\eps } \right)^{n},
$$ where we used the assumption that $\eps > e^{-n}$ in the last passage. This completes the proof  of the lemma.
\end{proof}

Given a convex body $K \subseteq \RR^n$ with the origin in its interior, we consider the associated (non-symmetric) norm
$$ \| x \|_K = \inf \{ \lambda \geq 0 \, ; \, x \in \lambda K \} \qquad \qquad (x \in \RR^n). $$
The supporting functional of $K$ is
$$ h_K(y) = \| y \|_{K^{\circ}} = \sup_{z \in K} \langle y,z \rangle = \sup_{0 \neq z \in \RR^n} \frac{\langle y, z \rangle}{\| z \|_K}\qquad \qquad (y \in \RR^n). $$
We write $\pi(t,x) = x$ for $(t,x) \in \RR \times \RR^n \cong \RR^{n+1}$.

\begin{lemma} Let $K \subseteq \RR^n$ be a convex body with the origin in its interior,
and set
\begin{equation}
V = \left \{ (t, tx) \in \RR \times \RR^n \, ; \, t \geq 0, x \in K \right \}. \label{eq_854}
\end{equation}
Then for any $y \in \i(V^*)$ and $x \in \RR^n$, denoting $y = (y_1, \pi(y)) \in \RR \times \RR^n$ we have
\begin{equation}
\| x \|_{\pi(K_y)} = -y_1 \| x \|_K - \langle x, \pi(y) \rangle,
\label{eq_910_}
\end{equation}
and when setting $T = \pi(K_y) \subseteq \RR^n$ we obtain
\begin{equation}
T^{\circ} = - y_1 K^{\circ} -\pi(y).
\label{eq_928_} \end{equation} \label{lem_435}
\end{lemma}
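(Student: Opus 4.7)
The plan is to directly parametrize the hyperplane section $K_y$ by $K$, project via $\pi$ to obtain an explicit description of $T$, and read off both formulas from elementary convex-analytic manipulations. The only real subtlety is keeping track of signs (note $y_1<0$) and of the nonvanishing of certain denominators, both of which are ensured by $y\in\i(V^*)$.

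First I would observe that since $V\subset\RR^{n+1}$ is proper and $y\in\i(V^*)$, we have $\langle y,z\rangle<0$ for every nonzero $z\in V$. Plugging $z=(t,tx)$ with $t>0$ and $x\in K$ yields
$$ y_1+\langle x,\pi(y)\rangle < 0 \qquad \text{for every } x\in K, $$
and in particular $y_1<0$ (take $x=0\in\i(K)$). It follows that for every $x\in K$ there is a unique $t(x)=-1/(y_1+\langle x,\pi(y)\rangle)>0$ with $(t(x),t(x)x)\in K_y$, and $x\mapsto (t(x),t(x)x)$ is an affine bijection from $K$ onto $K_y$. Projecting gives
$$ T=\pi(K_y)=\left\{-\frac{x}{y_1+\langle x,\pi(y)\rangle}\;:\;x\in K\right\}. $$

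For (\ref{eq_910_}), I would fix $z\in\RR^n$ and compute $\|z\|_T=\inf\{\lambda>0:z/\lambda\in T\}$. Setting $z/\lambda=-x/(y_1+\langle x,\pi(y)\rangle)$ forces $x$ to be a positive scalar multiple of $z$; writing $x=\mu z$ and substituting, the scalar equation yields
$$ \mu=\frac{-y_1}{\lambda+\langle z,\pi(y)\rangle}, $$
which is positive precisely when $\lambda>-\langle z,\pi(y)\rangle$. The constraint $x\in K$ becomes $\mu\|z\|_K\le 1$, equivalently $\lambda\ge -y_1\|z\|_K-\langle z,\pi(y)\rangle$. Taking the infimum over admissible $\lambda$ produces (\ref{eq_910_}); non-negativity of the right-hand side is automatic since $\|z\|_T$ is a gauge, and can also be verified from the fact that $y\in V^*$ means $-\pi(y)/y_1\in K^\circ$.

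For (\ref{eq_928_}), I would use the explicit form of $T$ above: a point $w\in\RR^n$ belongs to $T^\circ$ iff $\langle w,z\rangle\le 1$ for every $z=-x/(y_1+\langle x,\pi(y)\rangle)$ with $x\in K$. Because the denominator is strictly negative, clearing it (and flipping the inequality) this becomes
$$ \langle w,x\rangle \le -\bigl(y_1+\langle x,\pi(y)\rangle\bigr)\quad\text{for every }x\in K, $$
i.e.\ $\langle w+\pi(y),x\rangle\le -y_1$ for every $x\in K$. Dividing by $-y_1>0$ gives $(w+\pi(y))/(-y_1)\in K^\circ$, which rearranges to $w\in -y_1 K^\circ-\pi(y)$. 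The reverse inclusion is immediate by reversing these steps, yielding (\ref{eq_928_}). The whole argument is elementary; the main obstacle is simply tracking the signs introduced by the convention that $V^*$ is defined using $\langle x,y\rangle\le 0$, which forces $y_1<0$ rather than $y_1>0$.
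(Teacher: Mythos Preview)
Your argument is correct and follows essentially the same route as the paper's proof: both compute $\pi(K_y)$ directly and then read off the gauge and the polar. The only difference is in presentation. The paper describes $\pi(K_y)$ as a sublevel set,
\[
\pi(K_y)=\Bigl\{z\in\RR^n : \|z\|_K \le -\tfrac{1+\langle z,\pi(y)\rangle}{y_1}\Bigr\},
\]
and obtains (\ref{eq_910_}) immediately from positive homogeneity of the right-hand side; you instead parametrize $\pi(K_y)$ as the image of $K$ under $x\mapsto -x/(y_1+\langle x,\pi(y)\rangle)$ and compute the gauge by solving for the preimage, which costs a few extra lines of algebra. For (\ref{eq_928_}) the two computations are identical in substance. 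One small slip: the map $x\mapsto (t(x),t(x)x)$ is a bijection from $K$ onto $K_y$, but it is not affine (it is projective); this does not affect the argument.
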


\begin{proof} By definition,
	$$ K_y = \left \{ (t, tx) \, ; \, t \geq 0, \ x \in K, \ t y_1 + \langle tx, \pi(y) \rangle  = -1 \right \}. $$
It follows that
$$ \pi(K_y) = \left \{ x \in \RR^n \, ; \, \| x \|_K \leq -\frac{1+ \langle x, \pi(y) \rangle}{y_1} \right \}, $$
from which (\ref{eq_910_}) follows. Next, for any $z \in \RR^n$, we see that $z \in T^{\circ}$
if and only if
\begin{equation}
 \langle z, x \rangle \leq \| x \|_{\pi(K_y)} = -y_1 \| x \|_K - \langle x, \pi(y) \rangle
\qquad \textrm{for all} \ x \in \RR^n. \label{eq_924} \end{equation}
Condition (\ref{eq_924}) is  equivalent to $\langle z + \pi(y) , x \rangle \leq -y_1 \| x \|_K$
for all $x \in \RR^n$. Thus $z \in T^{\circ}$ if and only if $h_K( z + \pi(y)  )  \leq -y_1$,
or equivalently if and only if $z + \pi(y) \in -y_1 K^{\circ}$. The relation  (\ref{eq_928_}) follows.
\end{proof}

\begin{proof}[Proof of Theorem \ref{thm_903}:] Define $V$ as in (\ref{eq_854}).
Since the barycenter of $K$ lies at the origin, the point $e = (1, 0) \in \RR \times \RR^n$
is the barycenter of $K_{-e} = \{1 \} \times K$. We will apply Lemma \ref{lem_413} with $y_0 = -e$. From the conclusion of this lemma, there exists $y \in \i(V^*)$  such that
	$$ L_{K_{y}} \leq C / \sqrt{\eps}.
	$$
Moreover, $y + e  \in V^* \cap (-\eps e - V^*)$.
Since $\langle z, e \rangle \leq 0$ for any $z \in V^*$, by setting $y_1 := \langle y, e \rangle$ we have
\begin{equation} -1-\eps \leq y_1 \leq -1. \label{eq_1136} \end{equation}
Since $y + e \in V^*$ we obtain from (\ref{eq_1136}) that
$$ y + e \in \left \{ z \in V^* \, ; \, -\eps \leq z_1 \leq 0 \right \}
= \left \{ (t, t w ) \in \RR \times \RR^n \, ; \,
-\eps \leq t \leq 0,  \, w \in -K^{\circ} \right \}. $$
Thus $\pi(y) \in \eps K^{\circ}$. Since $y + e \in -\eps e - V^*$ we obtain from (\ref{eq_1136}) that
$$ y + e \in \left \{ z \in -\eps e - V^* \, ; \, -\eps \leq z_1 \leq 0 \right \}
= \left \{ (-\eps+t, t w ) \in \RR \times \RR^n \, ; \,
0 \leq t \leq \eps,  \, w \in -K^{\circ} \right \}. $$
Thus $\pi(y) \in -\eps K^{\circ}$. To summarize,
\begin{equation} \pi(y) \in \eps (K^{\circ} \cap (-K^{\circ}) ). \label{eq_1153} \end{equation}
Denote
$$ T = -y_1 \cdot \pi(K_y). $$
Then $T \subseteq \RR^n$ is a convex body that is affinely equivalent to $K_y$, and hence $L_T = L_{K_y} < C / \sqrt{\eps}$.
Furthermore, by Lemma \ref{lem_435},
\begin{equation}  T^{\circ} = K^{\circ} + \frac{\pi(y)}{y_1}, \label{eq_1154} \end{equation}
and $T^{\circ}$ is a translate of $K^{\circ}$. From (\ref{eq_1136}) and (\ref{eq_1153}) we know that $\pi(y) / y_1$ belongs
to $\eps (-K^{\circ}) \cap \eps K^{\circ}$. Hence, from (\ref{eq_1154}),
$$ T^{\circ} \subseteq K^{\circ} + \eps K^{\circ} \qquad \text{and} \qquad K^{\circ} \subseteq T^{\circ} + \eps K^{\circ}. $$
Equivalently,
$$ (1 - \eps) K^{\circ} \subseteq  T^{\circ} \subseteq (1 + \eps) K^{\circ}. $$
Since $0 < \eps < 1/2$, by dualizing this inclusion we obtain
$$ (1 - \eps) \cdot K \subseteq \frac{1}{1+\eps} \cdot K \subseteq  T \subseteq \frac{1}{1-\eps} \cdot K \subseteq (1 + 2 \eps) \cdot K, $$
and the theorem follows by adjusting the universal constant $C$.
\end{proof}

{
}

\smallbreak
\noindent Department of Mathematics, Weizmann Institute of Science, Rehovot 76100 Israel, and
School of Mathematical Sciences, Tel Aviv University, Tel Aviv 69978 Israel.

\smallbreak
\hfill \verb"boaz.klartag@weizmann.ac.il"


\begin{thebibliography}{99}
\addcontentsline{toc}{section}{References}
\setlength{\itemsep}{1pt}

\bibitem{Alonso} Alonso-Guti\'errez, D., {\it On an extension of the Blaschke-Santal\'o inequality and
the hyperplane conjecture}. J. Math. Anal. Appl., Vol. 344, No. 1, (2008), 292–-300.

\bibitem{AKO} Artstein-Avidan, S., Karasev, R., Ostrover, Y., {\it
From symplectic measurements to the Mahler conjecture.}
Duke Math. J., Vol. 163, No. 11, (2014),  2003–-2022.

\bibitem{BF}  Barthe, F., Fradelizi, M.,
{\it The volume product of convex bodies with many hyperplane symmetries. } Amer. J. Math., Vol. 135, No. 2, (2013), 311–-347.




\bibitem{Bou1} Bourgain, J., {\it On high-dimensional maximal
functions associated to convex bodies.}  Amer. J. Math., Vol. 108, No. 6, (1986),  1467--1476.

\bibitem{Bou2} Bourgain, J.,
{\it Geometry of Banach spaces and harmonic analysis. } Proc. of
 Internat. Congress of Mathematicians (Berkeley 1986), Amer. Math. Soc.,  (1987), 871--878.


\bibitem{Bou3} Bourgain, J., {\it On the distribution of polynomials on
high dimensional convex sets.}
Geometric aspects of functional analysis, Israel seminar, Lecture Notes in Math.,
1469, Springer, (1991), 127--137.

\bibitem{Bou4} Bourgain, J., {\it On the isotropy-constant problem
for ``Psi-$2$'' bodies.} Geometric aspects of functional analysis, Israel seminar, Lecture Notes in Math.,
  { 1807}, Springer, (2002), 114--121.


\bibitem{BM}  Bourgain, J., Milman, V. D., {\it New volume ratio properties for convex symmetric bodies in $\RR^n$}. Invent. Math., Vol. 88, No. 2, (1987),  319–-340.


\bibitem{BGVV} Brazitikos, S., Giannopoulos, A., Valettas, P., Vritsiou, B.-H.,
{\it Geometry of isotropic convex bodies.}
 American Mathematical Society, Providence, RI, 2014.


\bibitem{Gian_phd} Giannopoulos, A.,
{\it Problems on Convex Bodies} (in Greek). Ph.D. thesis
written under the supervision
of Prof. Papadopoulou, University of Crete, Greece, 1993.


\bibitem{GPV} Giannopoulos, A., Paouris, G., Vritsiou, B.-H., {\it The isotropic position and the reverse Santal\'o inequality.}
 Israel J. Math., Vol. 203, No. 1, (2014),  1–-22.

\bibitem{K_quarter} Klartag, B., {\it
On convex perturbations with a bounded isotropic constant.}
Geom. Funct. Anal. (GAFA), Vol. 16, No. 6, (2006),  1274–-1290.

\bibitem{k_ptrf} Klartag, B., {\it A Berry-Esseen type inequality for convex bodies with an unconditional basis. }
Probab. Theory Related Fields, Vol. 145, No. 1-2, (2009) 1–-33.


\bibitem{Ku} Kuperberg, G.,
{\it From the Mahler conjecture to Gauss linking integrals.}
Geom. Funct. Anal. (GAFA), Vol. 18, No. 3, (2008), 870–-892.


\bibitem{Ku2} Kuperberg, G.,
{\it The bottleneck conjecture.}
Geom. Topol., Vol. 3,  (1999), 119–-135.

\bibitem{Kuper_private} Kuperberg, G., private communication.



\bibitem{Mag} Magazinov, A., {\it
Proof of a conjecture by Haviv, Lyubashevsky and Regev on the second moment of a lattice Voronoi cell}.
Preprint, arXiv: 1707.03809


\bibitem{Ma}  Mahler, K., {\it Ein \"Ubertragungsprinzip f\"ur konvexe k\"orper}.
\v{C}asopis Pest Mat. Fys., Vol. 68, (1939), 93--102.

\bibitem{Me} Meyer, M., {\it
Convex bodies with minimal volume product in $\RR^2$.}
Monatsh. Math., Vol. 112, No. 4, (1991), 297–-301.

\bibitem{MP} Meyer, M., Pajor, A., {\it On the Blaschke-Santal\'o inequality.}
 Arch. Math. (Basel), Vol. 55, No. 1, (1990), 82–-93.


\bibitem{MiP} Milman, V.D., Pajor, A., {\it Isotropic position and
inertia ellipsoids and zonoids of the unit ball of a normed
$n$-dimensional space. } Geometric aspects of functional analysis, Israel seminar, Lecture Notes in Math., {1376}, Springer,
(1989), 64--104.


\bibitem{MiP2} Milman,  V. D., Pajor, A.,
{\it Entropy and asymptotic geometry of non-symmetric convex bodies.} Adv. Math., Vol. 152, No. 2, (2000), 314–-335.

\bibitem{Na} Nazarov, F., {\it
The H\"ormander proof of the Bourgain-Milman theorem. }
Geometric aspects of functional analysis, Israel seminar, Lecture Notes in Math., {2050}, Springer,
(2012), 335–-343.

\bibitem{Neretin} Neretin, Y. A., {\it Lectures on Gaussian integral operators and classical groups. }
European Mathematical Society (EMS) Series of Lectures in Mathematics, 2011.


\bibitem{Rad} Rademacher, L., {\it
A simplicial polytope that maximizes the isotropic constant must be a simplex.}
Mathematika, Vol. 62, No. 1, (2016),  307–-320.


\bibitem{roc}  Rockafellar, R. T., {\it Convex analysis.} Reprint of the 1970 original.
  Princeton University Press, Princeton, NJ, 1997.


\bibitem{RS}  Rogers, C. A., Shephard, G. C., {\it Convex bodies associated with a given convex body. }
J. London Math. Soc., Vol. 33, (1958), 270–-281.


\bibitem{sara}
Saint-Raymond, J., {\it Sur le volume des corps convexes sym\'etriques.}
S\'eminaire Initiation \`a
l'Analyse, Univ. Pierre et Marie Curie Paris, 1980/81, Vol. 11, 25pp.

\bibitem{Sa} Santal\'o, L. A., {\it An affine invariant for convex bodies of $n$-dimensional space}, Portug. Math., Vol. 8, (1949),
155–-161.

\bibitem{Schm} Schmuckenschl\"ager, M., {\it The distribution function of the convolution square of a convex symmetric body in $\RR^n$}.
Israel J. Math., Vol. 78, No. 2, (1992),  309–-334.


\bibitem{sch}  Schneider, R., {\it Convex bodies: the Brunn-Minkowski theory. } Second edition.
 Encyclopedia of Mathematics and its Applications, 151. Cambridge University Press, 2014.


\bibitem{SW} Sch\"utt, C., Werner, E., {\it
 The convex floating body.} Math. Scand., Vol. 66, No. 2, (1990),  275–-290.


\bibitem{Vin} Vinberg, E. B., {\it Homogeneous convex cone}. An entry at the {\it Encyclopaedia of mathematics},
edited by M. Hazewinkel, Kluwer Academic Publishers, 1990.
%Available online at
%\url{http://www.encyclopediaofmath.org/index.php?title=Homogeneous_convex_cone&oldid=12880}

\end{thebibliography}
\end{document}